\newcommand{\cA}{\mathcal{A}}
\newcommand{\cC}{\mathcal{C}}
\newcommand{\cF}{\mathcal{F}}
\newcommand{\cL}{\mathcal{L}}
\newcommand{\cM}{\mathcal{M}}
\newcommand{\cN}{\mathcal{N}}
\newcommand{\cP}{\mathcal{P}}
\newcommand{\cS}{\mathcal{S}}
\newcommand{\bbI}{\mathbb{I}}
\newcommand{\bbN}{\mathbb{N^*}}
\newcommand{\bbR}{\mathbb{R}}
\newcommand{\bbZ}{\mathbb{Z}}
\newcommand{\rmA}{A}
\newcommand{\rmB}{B}
\newcommand{\rmC}{C}
\newcommand{\rmD}{D}
\newcommand{\rmE}{E}
\newcommand{\rmF}{F}
\newcommand{\rmG}{G}
\newcommand{\rmH}{H}
\newcommand{\rmL}{L}
\newcommand{\rmN}{N}
\newcommand{\rmP}{P}
\newcommand{\rmS}{S}
\newcommand{\rmT}{T}
\newcommand{\rmV}{V}
\newcommand{\rmZ}{Z}
\newcommand{\rma}{a}
\newcommand{\rmb}{b}
\newcommand{\rmc}{c}
\newcommand{\rmd}{d}
\newcommand{\rmf}{f}
\newcommand{\rmg}{g}
\newcommand{\rmh}{h}
\newcommand{\rmi}{i}
\newcommand{\rmj}{j}
\newcommand{\rmk}{k}
\newcommand{\rmm}{m}
\newcommand{\rmn}{n}
\newcommand{\rmp}{p}
\newcommand{\rmr}{r}
\newcommand{\rms}{s}
\newcommand{\rmt}{t}
\newcommand{\rmu}{u}
\newcommand{\rmv}{v}
\newcommand{\rmw}{w}
\newcommand{\rmx}{x}
\newcommand{\rmy}{y}
\newcommand{\rmz}{z}
\newcommand{\bc}{\boldsymbol{c}}
\newcommand{\be}{\boldsymbol{e}}
\newcommand{\bo}{\boldsymbol{o}}
\newcommand{\bp}{\boldsymbol{p}}
\newcommand{\bu}{\boldsymbol{u}}
\newcommand{\bv}{\boldsymbol{v}}
\newcommand{\bw}{\boldsymbol{w}}
\newcommand{\bx}{\boldsymbol{x}}
\newcommand{\bxs}{\boldsymbol{x}^\star}
\newcommand{\by}{\boldsymbol{y}}
\newcommand{\bz}{\boldsymbol{z}}
\newcommand{\bhalf}{\boldsymbol{\frac{1}{2}}}
\newcommand{\bzero}{\boldsymbol{0}}
\newcommand{\e}{\varepsilon}
\newcommand{\fhi}{\varphi}
\newcommand{\lrb}[1]{\left(#1\right)}
\newcommand{\brb}[1]{\bigl(#1\bigr)}
\newcommand{\Brb}[1]{\Bigl(#1\Bigr)}
\newcommand{\bbrb}[1]{\biggl(#1\biggr)}
\newcommand{\lsb}[1]{\left[#1\right]}
\newcommand{\bsb}[1]{\bigl[#1\bigr]}
\newcommand{\lcb}[1]{\left\{#1\right\}}
\newcommand{\bcb}[1]{\bigl\{#1\bigr\}}
\newcommand{\Bcb}[1]{\Bigl\{#1\Bigr\}}
\newcommand{\lce}[1]{\left\lceil#1\right\rceil}
\newcommand{\bce}[1]{\bigl\lceil#1\bigr\rceil}
\newcommand{\lfl}[1]{\left\lfloor#1\right\rfloor}
\newcommand{\labs}[1]{\left\lvert#1\right\rvert}
\newcommand{\babs}[1]{\bigl\lvert#1\bigr\rvert}
\newcommand{\Babs}[1]{\Bigl\lvert#1\Bigr\rvert}
\newcommand{\lno}[1]{\left\lVert#1\right\rVert}
\newcommand{\bno}[1]{\bigl\lVert#1\bigr\rVert}
\newcommand{\Bno}[1]{\Bigl\lVert#1\Bigr\rVert}
\newcommand{\lan}[1]{\left\langle#1\right\rangle}
\newcommand{\ban}[1]{\bigl\langle#1\bigr\rangle}
\newcommand{\dcube}{[0,1]^d}
\newcommand{\tc}[1]{\textcolor{red}{TC: #1}}
\newcommand{\fb}[1]{\textcolor{purple}{FB: #1}}
\DeclareMathOperator*{\argmin}{arg\,min}
\newcommand{\bisect}[1]{\mathrm{bisect}\lrb{#1}}
\newcommand{\dif}{\,\mathrm{d}}
\newcommand{\level}[2]{\mathrm{level}\lrb{#1,#2}}
\newcommand{\lip}{Lipschitz}
\newcommand{\hold}{H\"older}
\newcommand{\gradlip}{gradient-\lip{}}
\newcommand{\gradho}{gra\-di\-ent-\hold{}}
\newcommand{\Gradho}{Gradient-\hold{}}
\newcommand{\mink}{Minkowski}
\newcommand{\findo}{Find-$\bo$}
\newcommand{\shooto}{Shoot-from-$\bo$}
\newcommand{\bai}{Bisect and Approximate}
\newcommand{\bi}{BA}
\newcommand{\bih}{BAH}
\newcommand{\biga}{BAG}
\newcommand{\nearo}{rate-optimal}
\newcommand{\Nearo}{Rate-optimal}
\newcommand{\interp}{approximator}
\newcommand{\tol}{tolerance}
\newcommand{\toll}{tol.}
\newcommand{\hyperc}{hypercube}
\newcommand{\proper}{proper}
\newcommand{\nls}{NLS}
\newcommand{\nlss}{Near-Level-Set}
\newcommand{\suppl}{Supplementary Material}
\newcommand{\wt}{\widetilde}
\newcommand{\s}{\subseteq}
\newcommand{\m}{\setminus}
\newcommand{\iop}{\infty}
\newcommand{\ld}{\ldots}
\newcommand{\norm}[1]{\left\lVert#1\right\rVert}
\newcommand{\ts}{\rmt^\star}
\newcommand{\fd}{\ell}
\newcommand{\fn}{\mathfrak{n}}
\newcommand{\Sn}{\rmS_\rmn}
\newcommand{\fa}{\{\rmf = \rma\}}
\newcommand{\fae}{\bcb{ \labs{ \rmf - \rma} \le \e } }
\newcommand{\propp}[1]{(\mathrm{P}_{#1})}
\newcommand{\ds}{\rmd^\star}
\newcommand{\Cs}{\rmC^\star}
\newcommand{\sdm}{\cS^{\rmd-1}}
\newcommand{\fz}{\rmf_{\bz}}
\newcommand{\oo}{\bo_1}
\newcommand{\eqdef}{\stackrel{\mbox{\scriptsize \rmfamily def}}{=}}
\newtheorem{lemma}{Lemma}
\newtheorem*{lemma*}{Lemma}
\newtheorem{proposition}{Proposition}
\newtheorem{claim}{Claim}
\newtheorem{corollary}{Corollary}
\newtheorem*{corollary*}{Corollary}
\newtheorem{theorem}{Theorem}
\newtheorem*{theorem*}{Theorem}
    \theoremstyle{definition}
\newtheorem{assumption}{Assumption}
\newtheorem{definition}{Definition}
\begin{document}

% %%maketitle information %TC: these are the old command
% \title{The sample complexity of level set approximation}
% \author[1]{Fran\c{c}ois Bachoc}
% \author[2]{Tommaso R. Cesari}
% \author[3]{S\'ebastien Gerchinovitz}
% \affil[1]{Institut de Mathématiques de Toulouse \& University Paul Sabatier}
% \affil[2]{Toulouse School of Economics}
% \affil[3]{IRT Saint Exup\'ery \& Institut de Mathématiques de Toulouse}
% \maketitle

% If your paper is accepted and the title of your paper is very long,
% the style will print as headings an error message. Use the following
% command to supply a shorter title of your paper so that it can be
% used as headings.
%
%\runningtitle{I use this title instead because the last one was very long}

% If your paper is accepted and the number of authors is large, the
% style will print as headings an error message. Use the following
% command to supply a shorter version of the authors names so that
% they can be used as headings (for example, use only the surnames)
%
%\runningauthor{Surname 1, Surname 2, Surname 3, ...., Surname n}

\twocolumn[

\aistatstitle{The Sample Complexity of Level Set Approximation}

\aistatsauthor{ Fran\c{c}ois Bachoc \And Tommaso R. Cesari \And  S\'ebastien Gerchinovitz }

\aistatsaddress{University Paul Sabatier \\ Institut de Mathématiques de Toulouse   \And  Toulouse School of Economics \And IRT Saint Exup\'ery \\Institut de Mathématiques de Toulouse } ]

% \begin{abstract}
% We study the problem of determining an approximation of the level set of an unknown function by sequentially querying its values.
% We introduce a family of algorithms called \bai{} through which we reduce the level set approximation problem to a local function approximation problem.
% We then show how this approach leads to \nearo{} sample complexity guarantees for \hold{} functions, and we investigate how such rates improve when additional regularity or convexity is assumed.
% \end{abstract}

\begin{abstract}
We study the problem of approximating the level set of an unknown function by sequentially querying its values.
We introduce a family of algorithms called \bai{} through which we reduce the level set approximation problem to a local function approximation problem.
We then show how this approach leads to rate-optimal sample complexity guarantees for \hold{} functions, and we investigate how such rates improve when additional smoothness or other structural assumptions hold true.
\end{abstract}

% \tc{remove table of contents before submitting}
% \tableofcontents

% \section{Introduction \tc{to do}}
\section{INTRODUCTION}

Let $f\colon [0,1]^d \to \bbR$ be any function. For $a \in \bbR$, we consider the problem of finding the level set %$\{f=a\} \eqdef \bcb{ \bx \in \dcube : \rmf(\bx) = \rma }$.
\[
\{f=a\} \eqdef \bcb{ \bx \in \dcube : \rmf(\bx) = \rma } \;.
\]

% \paragraph{Setting: Sequential Evaluation of a Black-Box Function.}
\paragraph{Setting: Sequential Black-Box Evaluation.}
We study the case in which $f$ is black-box, i.e., except for some \emph{a priori} knowledge on its smoothness, we can only access $f$ by sequentially querying its values at a sequence $\bx_1,\bx_2,\ldots \in [0,1]^d$ of points of our choice (Online Protocol~\ref{alg:protocol-det-algo}).
At every round $n \ge 1$, the query point $\bx_n$ can be chosen as a deterministic function of the values $f(\bx_1),\ldots,f(\bx_{n-1})$ observed so far. 
At the end of round $n$, the learner outputs a subset $S_n$ of $[0,1]^d$ with the goal of approximating the level set $\{f=a\}$.
{
%the following line in this block {...} change the name of the algorithm2e environment from Algorithm to Online Protocol
\renewcommand*{\algorithmcfname}{Online Protocol}
\begin{algorithm2e}
\DontPrintSemicolon
\For
{%
    $\rmn = 1, 2, \ld$
}
{
    pick the next query point $\bx_\rmn \in \dcube $\nllabel{a:protocol-pick}\;
    observe the value $\rmf(\bx_\rmn)$\nllabel{a:protocol-observe}\;
    output an approximating set $\rmS_\rmn \s \dcube$\nllabel{a:protocol-output}\;
}
\caption{\label{alg:protocol-det-algo} Deterministic Scheme}
\end{algorithm2e}
}

The problem of identifying the level set $\{f = a \}$ of a black-box function arises often in practice. 
In particular, this problem is closely related to excursion set estimation (also called failure domain estimation), where the goal is to estimate $\{ f \geq a \}$.\footnote{As it will become apparent later, our techniques for estimating level sets can be adapted for sub/superlevel set approximation straightforwardly, whilst retaining the same sample complexity guarantees (see Footnote~\ref{ft:sublevel}).}
Level-set identification and failure domain estimation are relevant to the field of computer experiments and uncertainty quantification, where $f(x)$ provides the output of a complex computer model for some input parameter $x$ \citep{DACE,TDACE}. Typical fields of applications are nuclear engineering \citep{chevalier14}, coastal flooding \citep{azzimonti2020adaptive} and network systems \citep{Ranjan2008}. Level set identification is also relevant when $f(x)$ corresponds to natural data \citep{rahimi2004adaptive,galland2004synthetic}. In many such situations, $f$ is so complex that it is considered black-box.

A typical example of a real use-case of interest is the Bombardier research aircraft configuration \citep{priem2020efficient}. Here, geometry parameters of an aircraft wing can be selected. Any choice of these parameters yields a corresponding maximum take-off weight output, which is obtained by a costly computational fluid dynamics simulation. From the setting of \citet{priem2020efficient}, one could for instance tackle the problem of estimating the set of all $\bx \in [0,1]^4$, where $\bx$ corresponds to the variables {\it wing span}, {\it wing leading edge sweep}, {\it wing break location} and {\it wingtip chord} (see Table 3 in \citealt{priem2020efficient}), for which $f(\bx) = a$ for some prescribed value $a>0$ of the maximum take-off weight. Furthermore, by setting more or less input parameters as active or inactive in Table 3 in \citet{priem2020efficient}, a series of level set estimation problems can be obtained, from dimension~$1$ to dimension~$18$.

\paragraph{Learning Goal.} There exist several ways to compare the estimators $S_n$ and the level set $\{f=a\}$. 
A first possibility is to use metrics or pseudometrics $\rho(A,B)$ between sets $A,B \s [0,1]^d$, such as the Hausdorff distance or the volume of the symmetric difference (e.g., \citealt{tsybakov1997nonparametric}). However a small value of $\rho(S_n,\{f=a\})$ does not imply that $S_n$ contains the whole set $\{f=a\}$, nor---in the case of the volume of the symmetric difference---that $f(x) \approx a$ for all $\bx \in S_n$. In practice, we might fail to identify \emph{all} critical states of a given system, or raise unnecessary false alarms.

In this paper, we therefore consider an alternative (new) way of quantifying our performance. For any accuracy $\e>0$, denote by $$\fae \eqdef \bcb{ \bx \in \dcube : \babs{ \rmf(\bx) - \rma } \le \e }$$  the \emph{inflated} level set at scale~$\e$. We will focus on algorithms whose outputs $S_n$ are $\e$-approximations of $\{f=a\}$, as defined below.
%We will only consider algorithms whose outputs $S_n$ provably contain $\{f=a\}$, and will study how many evaluations of $f$ are required for $S_n$ to be an $\e$-approximation of $\{f=a\}$, as defined below.

\begin{definition}[$\e$-approximation of a level set]
\label{def:eps-approx}
% For all levels $\rma\in\bbR$, each function $\rmf \colon \dcube \to \bbR$, and any accuracy $\e>0$,
We say that a set $\rmS \s \dcube$ is an \emph{$\e$-approximation} of the level set $\fa$ if and only if it contains $\fa$ while consisting only of points at which $\rmf$ is at most $\e$-away from $\rma$, i.e.,
\begin{equation}
    \label{e:e-approx}
    \fa
\s
    \rmS
\s
    \fae \;.
\end{equation}
\end{definition}

The main mathematical problem we address is that of determining the \emph{sample complexity} of level set approximation, that is, the minimum number of evaluations of $f$ after which $S_n$ is an $\e$-approximation of $\{f=a\}$ (see Section~\ref{sec:def} of the \suppl{} for a formal definition). We are interested in algorithms with rate-optimal worst-case sample complexity over classical function classes, as well as (slightly) improved sample complexity bounds in more favorable cases.

\paragraph{Main Contributions and Outline of the Paper.}

% Our main contributions are the following:
\begin{itemize}[nosep]
    \item We define a new learning goal for level set approximation (see above and Section~\ref{sec:def} of the \suppl{}) similar in spirit to that of \citet{gotovos2013active}.
    \item In Section~\ref{sec:hardness} we briefly discuss the inherent hardness of the level set approximation problem (Theorem~\ref{thm:inherenthardness}) and the role played by smoothness or structural assumptions on $f$.
    \item In Section~\ref{s:hold-bi} we design a family of algorithms called \emph{\bai{}} through which we reduce the level set approximation problem to a local function approximation problem.
    \item In Sections~\ref{s:holder} and~\ref{s:g-hold-biga} we instantiate \bai{} to the cases of \hold{} or \gradho{} functions.
    We derive upper and lower bounds showing that the sample complexity for level set approximation is of the order of $1/\e^{d/\beta}$ in the worst-case, 
    % We show that the sample complexity for level set approximation is at most of the order of $(1/\e)^{d/\beta}$ in the worst-case,
    where $\beta \in (0,2]$ is a smoothness parameter.
    %We provide a matching lower bound in the special case $\beta \in (0,1]$.
    \item In Section~\ref{s:adapt-small-nls} we also show that \bai{} algorithms adapt to more favorable functions~$f$ by featuring a slightly improved sample complexity in such cases.
\end{itemize}
Some lemmas and proofs are deferred to the \suppl{}.

\paragraph{Related Works.}
Sequential learning (sometimes referred to as sequential design of experiments) for level set and sublevel set identification is an active field of research. Many algorithms are based on Gaussian process priors over the black box function $f$ \citep{Ranjan2008,
Vazquez.Bect2009, Picheny.etal2010, bect12,chevalier14, Ginsbourger.etal2014, Wang.etal2016, bect2017bss,gotovos2013active}. 
In contrast with this large number of algorithms, few theoretical guarantees exist on the consistency or rate of convergence. 
Moreover, the majority of these guarantees are probabilistic. 
This means that consistency results state that an error goes to zero almost surely with respect to the Gaussian process prior measure over the unknown function $f$, and that the rates of convergence hold in probability, with respect to the same prior measure.
In this probabilistic setting,
\cite{bect2019supermartingale} provide a consistency result for a class of methods called Stepwise Uncertainty Reduction. \cite{gotovos2013active} provide rates of convergence, with noisy observations and for a classification-based loss function. 

The loss function of \cite{gotovos2013active}, given for sublevel set estimation, is similar in spirit to the notion of $\e$-approximation studied here for level set approximation, since we both aim at making decisions that are approximately correct for all $x$ in the input space.
The main difference is that \cite{gotovos2013active} assume that $f$ is a realization of a Gaussian process and thus provide guarantees that are probabilistic, while we prove deterministic bounds (for a fixed function). On the other hand, they consider noisy observations, while we assume $f$ can be evaluated perfectly.

%\sg{
%Hence, this paper is the first to provide near optimal algorithms, in the sense of Definition XX, that is for a fixed function, and with the notion of $\e$-approximation of Definition XX. In particular, the reference  discussed above studies another notion of estimation error and  }

A related problem studied in statistics is density level set estimation, in which the superlevel set of a density $\rmf$ is estimated by looking at i.i.d.\ draws of random variables with density $\rmf$.
For this problem, several different performance measures are considered, such as the Hausdorff distance \citep{cadre2013estimation,singh2009adaptive,tsybakov1997nonparametric} or a measure of the symmetric difference \citep{cadre2006kernel,rigollet2009fast,tsybakov1997nonparametric}.

When the function $\rmf$ is convex, our problem is also related to that of approximating a convex compact body with a simpler set (e.g., a polytope) in Hausdorff distance.
This has been studied extensively in convex geometry and several sequential and non-sequential algorithms have been proposed (see, e.g., the two surveys \citealt{kamenev2019optimal,gruber1993aspects} and references therein).

The closest connections with our work are within the bandit optimization literature. More precisely, our \bai{} algorithm and its analysis are inspired from the branch-and-bound algorithm of \citet[Appendix A.2]{LC18-AdaptivitySmoothnessBandits} and from the earlier methods of \citet{Per-90-OptimizationComplexity}, \citet[HOO algorithm]{bubeck2011x}, and \citet[DOO algorithm]{munos2014bandits}.
All these algorithms address the problem of finding a global extremum of $f$, while we are interested in finding level sets. However the idea of using a $2^d$-ary tree to construct refined partitions of the input domain, and sequential methods to select which branch to explore next, are key in this paper.

There are also algorithmic connections with the nonparametric statistics literature. In particular, the idea of locally approximating a target function has been used many times for different purposes (e.g., \citealt{GyKoKrWa-02-DistributionFreeNonparametric,Tsy-09-NonParametric}).
%The rates $1/\e^{d/\beta}$ are also reminiscent of the metric entropy of \hold{} or \gradho{} functions in $\sup$-norm \citep{KolmogorovTikhomirov1961,Lor-62-MetricEntropy}.

% \tc{Check if \href{https://www.ncbi.nlm.nih.gov/pmc/articles/PMC2970527/}{\underline{this}} should be cited or not} % Vaguely related. They take a curve represented as the level set of a function, then apply transformation to the curve to make it evolve over time. Applications: keep track of the boundaries of an image in a video. No theoretical guarantees. Focus on very quick running-time/video frame.

\paragraph{Additional Notation.} We denote the set of all positive integers $\{1,2,\ld\}$ by $\bbN$.
For all $\rmx \in \bbR$, we denote by $\lce{\rmx}$ (resp., $\lfl{\rmx}$) the ceiling (resp., floor) function at $\rmx$, i.e., the smallest (resp., largest) integer larger (resp., smaller) or equal to $\rmx$. Finally, for two sets $A$ and $B$, we write $A \subseteq B$ to say that $A$ is included in $B$ (possibly with equality).

\section{INHERENT HARDNESS}
\label{sec:hardness}

In this section we show that level sets are typically $(d-1)$-dimensional, and discuss the consequences of this fact in terms of the inherent hardness of the level set approximation problem.

We evaluate the dimension through the growth rate of packing numbers, one of the classical ways to measure the size of a set. In the case of the unit \hyperc{} and the $\sup$-norm, recall that packing numbers are defined as follows.
\begin{definition}[Packing number]
\label{d:packing-number}
For all $\rmr>0$, the $\rmr$-\emph{packing number} $\cN(\rmE,\rmr)$ of a subset $\rmE$ of $\dcube$ (with respect to the $\sup$-norm $\lno{\cdot}_\iop$) is the largest number of $\rmr$-separated points contained in $\rmE$, i.e.,
\begin{multline}
\label{e:packing}
    \cN (\rmE,\rmr)
:=
    \sup \bigl\{
        \rmk \in \bbN  :
\\
        \exists \, \bx_1, \ld, \bx_k \in \rmE, \, \min_{\rmi\neq \rmj} \lno{ \bx_\rmi - \bx_\rmj }_\iop > \rmr
    \bigr\}
\end{multline}
if $\rmE$ is nonempty, zero otherwise.
\end{definition}

The next theorem indicates that, with the exceptions of sets of minimizers or maximizers, $\e$-packing numbers of level sets $\fa$ of continuous functions $\rmf$ are at least $(d-1)$-dimensional. This result is very natural since $\fa$ is the solution set of one equation with $d$ unknowns.
\begin{theorem}
\label{thm:inherenthardness}
Let $\rmf\colon \dcube \to \bbR$ be a non-constant continuous function, and $\rma \in \bbR$ be any level such that $\min_{\bx \in \dcube} \rmf(\bx) < \rma < \max_{\bx \in \dcube}\rmf(\bx)$.
Then, there exists $\kappa>0$ such that, for all $\e>0$,
\[ 
    \cN \brb{ \fa, \; \e }
\ge
    \kappa \frac{1}{\e^{\rmd-1}} \;.
\]
\end{theorem}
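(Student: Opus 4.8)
The plan is to use the intermediate value theorem along a continuous family of one-dimensional slices of the cube. Pick points $\bx_-,\bx_+ \in \dcube$ with $\rmf(\bx_-) < \rma < \rmf(\bx_+)$; such points exist by hypothesis. First I would reduce to a situation where these two points differ in only one coordinate: by moving one coordinate at a time along axis-parallel segments and using continuity, one of the $d$ intermediate segments must have endpoints straddling $\rma$, so WLOG $\bx_- = (\rmc, \ldots)$ and $\bx_+$ agree in all but (say) the first coordinate, with $\rmf(\bx_-)<\rma<\rmf(\bx_+)$ (or the reverse). By continuity of $\rmf$ and compactness, there is a small cube $Q = [\rmu_1,\rmv_1] \times \prod_{\rmj=2}^{\rmd} [\rmu_\rmj,\rmv_\rmj] \s \dcube$ of positive side length, containing a slightly shrunk version of that segment, such that $\rmf < \rma$ on the face $\{x_1 = \rmu_1\} \cap Q$ and $\rmf > \rma$ on the face $\{x_1 = \rmv_1\} \cap Q$.

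Next I would extract the lower bound on the packing number. For each fixed choice of the last $\rmd-1$ coordinates $\by = (y_2,\ldots,y_{\rmd}) \in \prod_{\rmj=2}^{\rmd}[\rmu_\rmj,\rmv_\rmj]$, the function $t \mapsto \rmf(t,\by)$ is continuous on $[\rmu_1,\rmv_1]$ and takes a value $<\rma$ at $t=\rmu_1$ and a value $>\rma$ at $t=\rmv_1$, so by the intermediate value theorem there exists $\tau(\by) \in (\rmu_1,\rmv_1)$ with $\rmf(\tau(\by),\by) = \rma$, i.e.\ $(\tau(\by),\by) \in \fa$. Now take a maximal $\rmr$-separated set $\{\by^{(1)},\ldots,\by^{(m)}\}$ in the $(\rmd-1)$-dimensional box $\prod_{\rmj=2}^{\rmd}[\rmu_\rmj,\rmv_\rmj]$ with respect to $\lno{\cdot}_\iop$; a standard volume/grid argument gives $m \ge \kappa_0 / \rmr^{\rmd-1}$ for some constant $\kappa_0 > 0$ depending only on the side lengths of that box, valid for all small $\rmr>0$. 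The lifted points $(\tau(\by^{(i)}),\by^{(i)})$ all lie in $\fa$, and any two of them are still $\rmr$-separated in the $\sup$-norm on $\dcube$ because they already differ by more than $\rmr$ in the last $\rmd-1$ coordinates. Hence $\cN(\fa,\rmr) \ge \kappa_0/\rmr^{\rmd-1}$; absorbing the case of large $\rmr$ (where $\cN \ge 1$ suffices after shrinking $\kappa$) gives the bound for all $\e>0$ with a single constant $\kappa>0$.

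The main obstacle is the first reduction step: showing that a straddling axis-parallel segment exists and, more delicately, that $\rmf$ stays strictly on the correct side of $\rma$ on the two opposite faces of a genuinely full-dimensional subcube $Q$ — not merely at the two endpoints of a segment. This is where continuity and a compactness argument are needed: starting from one endpoint where $\rmf < \rma - \delta$ and the other where $\rmf > \rma + \delta$ for some $\delta>0$, one thickens the segment into a thin cube small enough that $\rmf$ varies by less than $\delta$ within each face, which is possible by uniform continuity of $\rmf$ on $\dcube$. Everything else — the intermediate value theorem on slices and the packing-number estimate for a Euclidean box — is routine. Note this argument also explains the exclusion of extremal levels: if $\rma$ equals $\min \rmf$ or $\max \rmf$, no straddling pair exists and $\fa$ can be a single point.
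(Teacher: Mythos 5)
Your overall strategy is the same as the paper's: thicken a segment along which $\rmf$ passes from strictly below $\rma$ to strictly above $\rma$, so that the endpoints of each one-dimensional ``fiber'' across the thickened tube straddle $\rma$; pick a packing of the $(\rmd-1)$-dimensional cross-section; and lift each packing point to a point of $\fa$ via the intermediate value theorem. The packing/lifting step is fine and matches the paper.

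However, the reduction to an \emph{axis-parallel} straddling segment is a genuine gap, and in fact the claim ``one of the $\rmd$ intermediate segments must have endpoints straddling $\rma$'' is false. Take $\rmd=2$, $\rma=0$, and
\[
\rmf(x,y)\;=\;\max\{0,\,x-\tfrac12\}\,\max\{0,\,y-\tfrac12\}\;-\;\max\{0,\,\tfrac12-x\}\,\max\{0,\,\tfrac12-y\}.
\]
This $\rmf$ is continuous and non-constant on $\dcube$, with $\rmf(0,0)=-\tfrac14<0<\tfrac14=\rmf(1,1)$, so the hypotheses of the theorem hold. But $\rmf(x,y)>0$ requires both $x>\tfrac12$ and $y>\tfrac12$, while $\rmf(x,y)<0$ requires both $x<\tfrac12$ and $y<\tfrac12$. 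Consequently, on any line $\{x=x_0\}$ the function has a constant sign (it is $\ge 0$ if $x_0\ge\tfrac12$, $\le 0$ if $x_0\le\tfrac12$), and likewise on any line $\{y=y_0\}$. Hence \emph{no} axis-parallel segment can have $\rmf<0$ at one endpoint and $\rmf>0$ at the other, and a fortiori no axis-parallel box $Q$ with $\rmf<\rma$ on one face and $\rmf>\rma$ on the opposite face exists. Your construction therefore cannot start. The ``using continuity'' remark does not rescue it, because the obstruction is structural, not a degenerate-equality edge case: the sublevel and superlevel sets lie in opposite closed quadrants, which an axis-parallel path can only cross through the level set.

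The fix, which is exactly what the paper does, is to drop the axis-parallel requirement. Pick $\bx\in(0,1)^\rmd$ with $\rmf(\bx)<\rma$ and $\by\in(0,1)^\rmd$ with $\rmf(\by)>\rma$, and small Euclidean balls $\rmB(\bx,\rho)\subseteq\{\rmf<\rma\}$, $\rmB(\by,\rho)\subseteq\{\rmf>\rma\}$. Work with the $(\rmd-1)$-dimensional disc $\rmH_{\bx}\cap\rmB(\bx,\rho)$, where $\rmH_{\bx}$ is the hyperplane through $\bx$ orthogonal to $\by-\bx$. Every fiber $\bp\mapsto[\bp,\,\bp+\by-\bx]$ with $\bp$ in that disc has endpoints strictly straddling $\rma$, so the intermediate value theorem applies on each fiber, and the parallel fibers stay separated by at least the separation of the base points. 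This is slightly heavier (one needs the packing number of a Euclidean $(\rmd-1)$-ball and a norm comparison between $\lno{\cdot}_2$ and $\lno{\cdot}_\iop$), but it is correct. So your proposal is not merely missing details --- the axis-parallel shortcut must be replaced by the paper's arbitrary-direction version, after which the remainder of your argument goes through.
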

We restate and prove this result in the \suppl{} (Theorem~\ref{t:level-sets-are-big}, Section~\ref{s:nls-dim}).

We note an important difference with the global optimization problem. Indeed, the set of global maximizers (or minimizers) of a function $f$ is typically finite and thus $0$-dimensional. This implies that, depending on the shape of $f$ around a global optimum, global optimization algorithms feature a sample complexity ranging roughly between $\log(1/\e)$ and $(1/\e)^d$ (see, e.g., \citealt{Per-90-OptimizationComplexity,munos2014bandits}).

In our case, by Theorem~\ref{thm:inherenthardness}, level sets are large, so that we can expect the sample complexity to depend heavily on the input dimension $d$. This is however not the end of the story. Indeed, as in nonparametric statistics  (e.g., \citealt{GyKoKrWa-02-DistributionFreeNonparametric,Tsy-09-NonParametric}) or in convex optimization (e.g., \citealt{Nes-04-ConvexOptimization,BoVa-04-ConvexOptimization,Bub-15-ConvexOptimization}), additional smoothness or structural assumptions like convexity of $f$ play a role in the hardness of the level set approximation problem. Since this problem is important in practice, designing algorithms that best exploit such additional assumptions is an important question. This is what we address in this paper.

% \section{\bi{} algorithms and their analyses}
\section{\bi{} ALGORITHMS \& ANALYSIS}
\label{s:hold-bi}

In this section, we introduce and analyze a family of algorithms designed for the problem of approximating the level set of an unknown function.
They are based on an iterative refinement of the domain $\dcube$, as made precise in the following definition.\\[-0.1cm]

\begin{definition}[Bisection of a family of \hyperc{}s]
\label{d:bisection}
Let $\cC$ be a family of $\rmn$ \hyperc{}s included in $\dcube$.
We say that $\bisect{\cC}$ is the \emph{bisection} of $\cC$ if it contains exactly the $2^\rmd \, \rmn$ \hyperc{}s obtained by subdividing each $\rmC = [a_1,b_1] \times \cdots [a_d,b_d] \in \cC$ into the $2^\rmd$ equally-sized smaller \hyperc{}s
% $\rmC_1, \ldots, \rmC_{2^\rmd}$ with disjoint interior and such that $\rmC_1 \cup \ld \cup \rmC_{2^\rmd} = \rmC$.
of the form $\rmC' = I_1 \times \cdots \times I_d$ with $I_j$ being either $\bigl[a_j,(a_j+b_j)/2\bigr]$ or $\bigl[(a_j+b_j)/2,b_j\bigr]$.
\end{definition}
Our algorithm is of the branch-and-bound type, similarly to other bandit algorithms for global optimization such as that of \citet[Appendix A.2]{LC18-AdaptivitySmoothnessBandits} and earlier methods \citep{Per-90-OptimizationComplexity, bubeck2011x, munos2014bandits}.

Our \bai{} algorithms\footnote{We refer to \bai{} algorithms in the plural form because different \bi{} algorithms can be defined with the same input, depending on which rules are used to pick points at line~\ref{a:pick} and \interp{}s at line~\ref{a:loc-interp}. 
E.g., \bih{} (Section~\ref{s:holder}) only looks at the center of each \hyperc{} and uses constant \interp{}s, while \biga{} (Section~\ref{s:g-hold-biga}) queries the value of $\rmf$ at all vertices of each \hyperc{} and builds higher-order polynomial \interp{}s.} (\bi{}, Algorithm~\ref{alg:bi}) maintain, at all iterations $\rmi$, a collection $\cC_\rmi$ of \hyperc{}s on which the target function $\rmf$ is determined to take values close to the target level~$\rma$.
A \bi{} algorithm takes as input the level $\rma$, a common number of queries $\rmk$ (to be performed in each \hyperc{} at all iterations), and a pair of \tol{} parameters $\rmb,\beta> 0$, related to the smoothness of $\rmf$ and the approximation power of the \interp{}s used by the algorithm. 
% ; these \tol{} parameters control both the tightness of the rejection rule applied to the \hyperc{}s and the form of the output set.
At the beginning of each iteration $\rmi$, the collection of \hyperc{}s  $\cC_{\rmi-1}$ determined at the end of the last iteration is bisected (line~\ref{a:bisect}), so that all new \hyperc{}s have diameter $2^{-\rmi}$ (in the $\sup$-norm).
Then, the values of the target function $\rmf$ at $\rmk$ points of each newly created \hyperc{} are queried (lines~\ref{a:pick}--\ref{a:observe}). 
The output set $\rmS_\rmn$ after $\rmn$ queries to $\rmf$ is a subset of the union of all \hyperc{}s in $\cC_{\rmi-1}$, i.e., the collection of all \hyperc{}s determined during to the latest \emph{completed} iteration.
The precise definition of $\rmS_\rmn$ depends on the \interp{}s used during the last completed iteration and the two \tol{} parameters $\rmb,\beta$ (lines~\ref{a:output}~and~\ref{a:output-def}).\footnote{Notably, the output set $\rmS(\rmi)$ at iteration $\rmi$ can be represented succinctly and testing if $\bx \in \rmS(\rmi)$ can be done efficiently in all our \bi{} instances in Sections~\ref{s:holder},~\ref{s:g-hold-biga}.}
After all $\rmk$ values of $\rmf$ are queried from a \hyperc{} $\rmC'$, this information is used to determine a local \interp{} $\rmg_{\rmC'}$ of $\rmf$ (line~\ref{a:loc-interp}).
Finally, the collection of \hyperc{}s $\cC_\rmi$ is updated using $\rmg_{\rmC'}$ as a proxy for $\rmf$ (line~\ref{a:update}) for all \hyperc{}s $\rmC'$. 
In this step, all \hyperc{}s $\rmC'$ in which the proxy $\rmg_{\rmC'}$ is too far from the target level $\rma$ are discarded, where the tightness of the rejection rule increases with the passing of the iterations $\rmi$ and it is further regulated by the two \tol{} parameters $\rmb, \beta$.
\begin{algorithm2e}
\DontPrintSemicolon
\SetKwInput{KwIn}{input}
\SetKwInput{kwInit}{init}
\KwIn{level $\rma \in \bbR$, queries $\rmk \in \bbN$, \toll{} $\rmb,\beta > 0$ }
\kwInit{$\rmD \gets \dcube$, $\cC_{0}\gets \{ \rmD \}$, $\rmg_\rmD \equiv \rma$, $\rmn \gets 0$ }
    \For%(\tcp*[f]{insert comment here})
    {%\nllabel{a:my-label}
        iteration $\rmi=1,2,\ld$
    }
    {
        % let $\rmn_\rmi \gets \rmn +1$\tcp*{beginning of iteration $i$}
        $\rmS{(\rmi)}  \gets \! \! \! \! \! \underset{\rmC \in \cC_{i-1}}{\bigcup} \! \! \! \! \bcb{ \bx \in \rmC : \babs{ \rmg_{\rmC} (\bx) - \rma } \le \rmb \, 2^{- \beta (\rmi-1)} }$\nllabel{a:output-def}\;%\tcp*{$\rmi$-th output set}
        $\cC_{\rmi}' \gets \bisect{\cC_{\rmi-1}}$\nllabel{a:bisect}\;%\tcp*{bisection}
        \For
        {%
           \textbf{\upshape each} \hyperc{} $\rmC'\in\cC_{i}'$
        }
        {
            \For
            {%
                $\rmj = 1, \ld, \rmk$
            }
            {
                update $\rmn \gets \rmn + 1$\;
                pick a query point $\bx_\rmn \in \rmC'$\nllabel{a:pick}\;
                observe $\rmf(\bx_\rmn)$\nllabel{a:observe}\;
                output $\rmS_\rmn \gets \rmS{(\rmi)}$\nllabel{a:output}\;
            }
        pick a local \interp{} $\rmg_{\rmC'} \colon \rmC' \to \bbR$\nllabel{a:loc-interp}\;
        }
        $\cC_{\rmi} \! \gets \! \! \bcb{ \rmC'\in\cC_{\rmi}' : \exists \bx \in \rmC', \babs{ \rmg_{\rmC'} (\bx) - \rma } \! \le \! \rmb \, 2^{- \beta \rmi} }$\nllabel{a:update}\;
    }
\caption{\label{alg:bi} \bai{} (\bi)}
\end{algorithm2e}

Our analysis of \bi{} algorithms (Theorem~\ref{t:general-packing}) hinges on the accuracy of the \interp{}s $\rmg_{\rmC'}$ selected at line~\ref{a:loc-interp}, as formalized in the following definition.
\begin{definition}[Accurate approximation]
\label{d:accurate-approx}
Let $\rmb,\beta > 0$ and $\rmC \s \dcube$. We say that a function $\rmg\colon \rmC \to \bbR$ is a $(\rmb,\beta)$-\emph{accurate approximation} of another function $\rmf\colon \dcube\to \bbR$ (on $\rmC$) if the distance (in the $\sup$-norm on $\rmC$) between $\rmf$ and $\rmg$ can be controlled with the diameter (in the $\sup$-norm) of $\rmC$ as
\[
    \sup_{\bx \in \rmC} \babs{ \rmg(\bx) - \rmf(\bx) }
\le
    \rmb \, \Brb{ \, \sup_{\bx,\by \in \rmC} \lno{ \bx - \by }_\iop \, }^\beta \;.
\]
\end{definition}
%
% As one might expect, the number of queries $\rmn(\e)$ (equation \eqref{e:thm-n-eps}; Theorem~\ref{t:general-packing}) after which \bi{} algorithms are guaranteed to output an $\e$-approximation of a level set decreases if the \interp{}s used by the algorithms are better at approximating the target function $\rmf$.
%
We now present one of our main results, which states that \bi{} algorithms run with accurate approximations of the target function return $\e$-approximations of the target level set after a number of queries that depends on the packing number (Definition~\ref{d:packing-number}) of the inflated level set at decreasing scales.\footnote{\label{ft:sublevel}Note that our \bi{} algorithm can be used for estimating a sublevel set $\{\rmf \le \rma \}$ by simply dropping the absolute values in lines \ref{a:output-def} and \ref{a:update} of Algorithm~\ref{alg:bi}. As the reader might realize, the same proof techniques would apply with the corresponding (straightforward) changes. 
An analogous argument applies to superlevel sets $\{\rmf \ge \rma \}$.}

\begin{theorem}
\label{t:general-packing}
Consider a \bai{} algorithm (Algorithm~\ref{alg:bi}) run with input $\rma, \rmk, \rmb,\beta$.
Let $\rmf \colon \dcube \to \bbR$ be an arbitrary function with level set $\fa \neq \varnothing$.
Assume that the \interp{}s $\rmg_{\rmC'}$ selected at line~\ref{a:loc-interp} are $(\rmb,\beta)$-accurate approximations of $\rmf$ (Definition~\ref{d:accurate-approx}).
% 
% the three following conditions hold:
% \begin{enumerate}[topsep = 0pt, parsep = 0pt, itemsep = 0pt]
% \item \label{i:condition-small-approx-level-set} 
% $\rmf$ is $(\rmc,\gamma)$-\hold{} (Definition~\ref{d:hold}) for some $\rmc > 0$, $\gamma \in (0,1]$;
% 
% \item \label{i:condition-good-interpolation} 
% the \interp{}s $\rmg_{\rmC'}$ (defined at line~\ref{a:loc-interp}) are $(\rmb,\beta)$-accurate approximations of $\rmf$ (Definition~\ref{d:accurate-approx});
% 
% \item \label{i:condition-well-spread-queries}
% the query points $\bx_{\rmn_\rmi}, \ld, \bx_{\rmn_{\rmi+1}-1}$ picked at any iteration $\rmi$ are $\delta$-well-spread (Definition~\ref{d:well-spread}) in $\cC_\rmi$.
% \end{enumerate}
Fix any accuracy $\e > 0$, let $\rmi(\e) := \bce{ (\nicefrac{1}{\beta}) \log_2 (\nicefrac{2\rmb}{\e}) }$, and define $\rmn(\e)$ by\footnote{Letting $\sum_{\rmi = 0}^{-\rmm} \rma_\rmi = 0$ for any $\rmm > 0$ and all $\rma_\rmj \in \bbR$.}
\begin{equation}
\label{e:thm-n-eps}
    4^\rmd \, \rmk \! \! \sum_{\rmi=0}^{\rmi(\e)-1} \! \! \lim_{\delta \to 1^-} \cN \Brb{ \bcb{ \labs{\rmf - \rma} \le 2 \, \rmb \, 2^{- \beta \rmi} }, \ \delta \, 2^{-\rmi} }\;.    
\end{equation}
Then, for all $\rmn > \rmn(\e)$, the output $\rmS_\rmn$ returned after the $\rmn$-th query is an $\e$-approximation of $\fa$.
\end{theorem}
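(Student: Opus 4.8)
The plan is to show two things: first, that the algorithm never discards a hypercube meeting the true level set $\fa$, so that $\fa \s \rmS_\rmn$ always; and second, that once we have completed iteration $\rmi(\e)$, every point in the output set $\rmS(\rmi(\e))$ has $|\rmf(\bx)-\rma|\le\e$, so $\rmS_\rmn \s \fae$. Combining these gives that $\rmS_\rmn$ is an $\e$-approximation. The count $\rmn(\e)$ will come from bounding, iteration by iteration, how many hypercubes can survive the culling at line~\ref{a:update}, then multiplying by $\rmk$ queries per cube and by $2^\rmd$ for the bisection overhead (the extra $4^\rmd/2^\rmd = 2^\rmd$ factor being slack to absorb off-by-one and "$\ge$ vs $>$" issues and the completion of iteration $\rmi(\e)-1$).

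Here is the order I would carry this out. \textbf{(Step 1: the level set is never lost.)} Fix $\bx^\star \in \fa$. I claim by induction that for every iteration $\rmi$, some hypercube $\rmC^\star_\rmi \in \cC_\rmi$ contains $\bx^\star$. Indeed, if $\bx^\star \in \rmC \in \cC_{\rmi-1}$, then after bisection $\bx^\star$ lies in one of the $2^\rmd$ children $\rmC' \in \cC_\rmi'$; for that child, $(\rmb,\beta)$-accuracy of $\rmg_{\rmC'}$ and $\diam_\iop(\rmC') = 2^{-\rmi}$ give $|\rmg_{\rmC'}(\bx^\star) - \rma| = |\rmg_{\rmC'}(\bx^\star) - \rmf(\bx^\star)| \le \rmb\,2^{-\beta\rmi}$, so $\rmC'$ passes the test at line~\ref{a:update} (witnessed by $\bx^\star$ itself) and survives into $\cC_\rmi$. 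The same inequality with $\rmi-1$ in place of $\rmi$ shows $\bx^\star$ belongs to the set carved out of $\rmC^\star_{\rmi-1}$ at line~\ref{a:output-def}, hence $\bx^\star \in \rmS(\rmi)$ for all $\rmi\ge 1$, i.e. $\fa \s \rmS_\rmn$ for all $\rmn\ge 1$.

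\textbf{(Step 2: the output is inside the inflated level set after enough iterations.)} Suppose $\rmn > \rmn(\e)$. I first argue the algorithm has completed at least iteration $\rmi(\e)$: each iteration $\rmi$ performs $\rmk$ queries in each of $2^\rmd|\cC_{\rmi-1}|$ cubes, and I bound $|\cC_\rmi|$ by relating it to the packing number. Any two distinct cubes of $\cC_\rmi$ have disjoint interiors and diameter $2^{-\rmi}$, and each contains (by line~\ref{a:update}) a point $\bx$ with $|\rmg_{\rmC}(\bx)-\rma|\le \rmb\,2^{-\beta\rmi}$, hence (by accuracy) a point with $|\rmf(\bx)-\rma| \le 2\rmb\,2^{-\beta\rmi}$; picking the centers of these cubes yields a family of points in $\{|\rmf-\rma|\le 2\rmb\,2^{-\beta\rmi}\}$ that are pairwise $> \delta\,2^{-\rmi}$-separated for every $\delta<1$ (any two distinct cube-centers are at distance $\ge 2^{-\rmi}$ in some coordinate when the cubes are interior-disjoint hypercubes of that side length — this is the mildly fiddly combinatorial point and the reason for the $\delta\to1^-$ limit), so $|\cC_\rmi| \le \lim_{\delta\to1^-}\cN(\{|\rmf-\rma|\le 2\rmb\,2^{-\beta\rmi}\},\delta\,2^{-\rmi})$. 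Summing $\rmk\cdot 2^\rmd|\cC_{\rmi-1}|$ over $\rmi=1,\dots,\rmi(\e)$ and comparing with \eqref{e:thm-n-eps} (the $4^\rmd$ generously covers the $2^\rmd$ from bisection) shows $\rmn(\e)$ queries suffice to finish iteration $\rmi(\e)$, so at round $\rmn$ the output equals $\rmS(\rmi')$ for some $\rmi' \ge \rmi(\e)+1$, hence is carved from $\cC_{\rmi'-1}$ with threshold $\rmb\,2^{-\beta(\rmi'-1)} \le \rmb\,2^{-\beta\,\rmi(\e)}$. For $\bx\in\rmS_\rmn$, lying in some $\rmC\in\cC_{\rmi'-1}$ with $|\rmg_\rmC(\bx)-\rma|\le \rmb\,2^{-\beta\,\rmi(\e)}$, accuracy gives $|\rmf(\bx)-\rma|\le 2\rmb\,2^{-\beta\,\rmi(\e)}$, and the choice $\rmi(\e)=\lceil(1/\beta)\log_2(2\rmb/\e)\rceil$ makes $2\rmb\,2^{-\beta\,\rmi(\e)}\le\e$. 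Thus $\rmS_\rmn\s\fae$, and with Step~1 we conclude.

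\textbf{(Main obstacle.)} The one place demanding care is the packing-number bound on $|\cC_\rmi|$ in Step~2: I need that interior-disjoint axis-aligned hypercubes of common side length $2^{-\rmi}$ have centers that are pairwise $2^{-\rmi}$-separated in the $\sup$-norm — true, but it forces the strict inequality in the definition of packing number to be handled via the limit $\delta\to1^-$, exactly as written in \eqref{e:thm-n-eps}; getting the bookkeeping of iteration indices (which iteration is "the last completed one", the shift between $\rmi$ and $\rmi-1$ in lines~\ref{a:output-def} and~\ref{a:update}, and the slack constants $2^\rmd$ vs $4^\rmd$) consistent is the rest of the work but is routine.
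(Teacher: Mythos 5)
Your Step~1 (the invariant $\fa \s \rmS(\rmi)$ at every iteration, proved by induction) is essentially the paper's argument and is correct. The structure of Step~2 is also the right one: bound $|\cC_\rmi|$ by a packing number, sum the query counts over iterations $1,\dots,\rmi(\e)$, and observe that once iteration $\rmi(\e)$ is complete the threshold in line~\ref{a:output-def} together with the accuracy of the \interp{}s forces $\rmS_\rmn \s \fae$.

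However, there is a genuine gap in your packing bound. You establish that each surviving cube $\rmC \in \cC_\rmi$ contains \emph{some} witness point $\bx_\rmC$ with $\labs{\rmf(\bx_\rmC)-\rma}\le 2\rmb\,2^{-\beta\rmi}$, and separately that the \emph{centers} of the cubes in $\cC_\rmi$ are pairwise $2^{-\rmi}$-separated in $\sup$-norm. You then "pick the centers" as the packing of $\bcb{\labs{\rmf-\rma}\le 2\rmb\,2^{-\beta\rmi}}$ — but nothing places the centers inside that set. In the general Theorem~\ref{t:general-packing}, the witness point guaranteed by line~\ref{a:update} is an arbitrary point of the cube (for \biga{} it is wherever the polynomial \interp{} crosses the band, not the center), so the only points you actually know lie in the inflated level set are the witness points, and those need \emph{not} be separated: two adjacent cubes can both be retained because their witness points sit on or near the shared face, at arbitrarily small $\sup$-distance. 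So the chain "centers are separated $\Rightarrow$ centers form a packing of the inflated level set" breaks, and the plain bound $\labs{\cC_\rmi}\le\lim_{\delta\to1^-}\cN(\cdot,\delta 2^{-\rmi})$ you invoke does not follow.

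The paper closes this gap with a $2^\rmd$-coloring of $\cC_\rmi$ (Claim~\ref{cl:main-thm} in Section~\ref{s:proofs-ba}, with the checkerboard picture in Figure~\ref{fig:damier}): the cubes are partitioned into $2^\rmd$ subfamilies so that any two distinct cubes of the same color are at $\sup$-distance $\ge 2^{-\rmi}$ as \emph{sets}, hence their witness points (wherever they sit inside the cubes) are strictly $(\delta 2^{-\rmi})$-separated for every $\delta<1$. Each color class then has cardinality at most $\cN\brb{\bcb{\labs{\rmf-\rma}\le 2\rmb 2^{-\beta\rmi}}, \delta 2^{-\rmi}}$, and summing over the $2^\rmd$ colors yields $\labs{\cC_\rmi}\le 2^\rmd\lim_{\delta\to1^-}\cN(\cdot,\delta 2^{-\rmi})$. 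This is exactly the second factor of $2^\rmd$ in $4^\rmd = 2^\rmd\cdot 2^\rmd$: one $2^\rmd$ from $|\cC'_\rmi| = 2^\rmd|\cC_{\rmi-1}|$, and one $2^\rmd$ from the coloring. Your remark that "the $4^\rmd$ generously covers the $2^\rmd$ from bisection" therefore mischaracterizes the constant — in the general theorem it is tight up to the coloring argument, and only drops to $2^\rmd$ under the extra hypothesis (noted after the proof in the paper, and satisfied by \bih{} and \biga{}) that the algorithm's chosen witness points are themselves $2^{-\rmi}$-separated across cubes.
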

The expression \eqref{e:thm-n-eps} can be simplified by taking $\delta = 1$ and increasing the leading multiplicative constant.
However, in the following sections we will see how to upper bound this quantity with simpler functions of $1/\e$, for which the limit can be computed exactly.
\begin{proof}
Fix any $\rmn > \rmn(\e)$.
We begin by proving that 
\begin{equation}
\label{e:first-inclusion-general}
    \fa \s \rmS_\rmn \;.
\end{equation}
Recall that $\rmS_\rmn = \bigcup_{\rmC \in \cC_{\iota-1}} \bcb{ \bx \in \rmC : \babs{ \rmg_{\rmC} (\bx) - \rma } \le \rmb \, 2^{- \beta (\iota-1)} }$ (line~\ref{a:output}), where $\iota = \iota(\rmn)$ is the iteration during which the $\rmn$-th value of $\rmf$ is queried.
To prove \eqref{e:first-inclusion-general}, we will show the stronger result: for all $i\ge0$,
\begin{equation}
    \label{e:first-incuision-stronger}
    \fa \s \bigcup_{\rmC \in \cC_\rmi} \Bcb{ \bx \in \rmC : \babs{ \rmg_{\rmC} (\bx) - \rma } \le \rmb \, 2^{- \beta \rmi} },
\end{equation}
i.e., that the level set $\fa$ is \emph{always} included in the output set, not only after iteration $\iota(\rmn)-1$ has been completed.
We do so by induction.
If $\rmi = 0$, then $\fa \s \dcube = \bcb{ \bx \in \dcube : \labs{ \rma - \rma } \le \rmb \, 2^{- \beta \cdot 0} }$, which is the union in \eqref{e:first-incuision-stronger} by definition of $\rmD = \dcube$, $\cC_0 = \bcb{ \rmD }$ and $\rmg_\rmD \equiv \rma$ in the initialization of Algorithm~\ref{alg:bi}.
Assume now that the inclusion holds for some $\rmi-1$: we will show that it keeps holding for the next iteration $\rmi \in \bbN$.
Indeed, fix any $\bz \in \fa$.
By induction, $\bz$ belongs to some \hyperc{} $\rmC \in \cC_{\rmi-1}$.
Since $\cC'_\rmi = \bisect{\cC_{\rmi-1}}$ (line~\ref{a:bisect}), by definition of bisection (Definition~\ref{d:bisection}) we have that
$
    \bigcup_{\rmC' \in \cC'_\rmi} \rmC' 
% = 
%     \bigcup_{\rmC' \in \bisect{\cC_{\rmi-1}}} \rmC'
=
    \bigcup_{\rmC \in \cC_{\rmi-1}} \rmC
$, 
which in turns implies that there exists a \hyperc{} $\rmC'_{\bz} \in \cC'_\rmi$ such that $\bz \in \rmC'_{\bz}$.
We show now that this $\rmC'_{\bz}$ also belongs to $\cC_\rmi$, i.e., that it is not discarded during the update of the algorithm at line~\ref{a:update}.
Indeed, since $\rmg_{\rmC'_{\bz}}$ is a $(\rmb,\beta)$-accurate approximation of $\rmf$ on $\rmC'_{\bz}$ (by assumption) 
and the diameter (in the $\sup$-norm) of $\rmC'_{\bz}$ is $\sup_{\bx, \by \in \rmC'_{\bz}} \lno{\bx-\by}_{\iop} = 2^{-\rmi}$, we have that
$
    \babs{ \rmg_{\rmC'_{\bz}}(\bz) - \rma }
=
    \babs{ \rmg_{\rmC'_{\bz}}(\bz) - \rmf(\bz) }
\le
    \rmb \, 2^{-\beta \rmi}
$.
This gives both that $\bz \in \rmC'_{\bz} \in \cC_\rmi$ (by definition of $\cC_\rmi$ at line~\ref{a:update}) and, consequently, that $\bz \in \bigcup_{\rmC \in \cC_\rmi} \bcb{ \bx \in \rmC : \babs{ \rmg_{\rmC} (\bx) - \rma } \le \rmb \, 2^{- \beta \rmi} }$, which clinches the proof of \eqref{e:first-incuision-stronger} and in turn yields \eqref{e:first-inclusion-general}.

We now show the validity of the second inclusion
\begin{equation}
\label{e:second-inclusion-general}
    \rmS_\rmn
\s
    \fae \;.
\end{equation}
As above, let $\iota = \iota(\rmn)$ be the iteration during which the $\rmn$-th value of $\rmf$ is queried by the algorithm.
Fix any $\bz \in \rmS_\rmn$.
We will prove that $\bz \in \fae$ or, restated equivalently, that $\babs{\rmf(\bz) - \rma} \le \e$.
By definition of $\rmS_\rmn = \bigcup_{\rmC \in \cC_{\iota-1}} \bcb{ \bx \in \rmC : \babs{ \rmg_{\rmC} (\bx) - \rma } \le \rmb \, 2^{- \beta (\iota-1)} }$ (line~\ref{a:output}), since $\bz \in \rmS_\rmn$, then there exists $\rmC_{\bz} \in \cC_{\iota -1}$ such that $\bz \in \rmC_{\bz}$ and $\babs{ \rmg_{\rmC_{\bz}} (\bz) - \rma } \le \rmb \, 2^{- \beta (\iota-1)}$.
Moreover, since $\rmC_{\bz} \in \cC_{\iota -1 } \s \cC'_{\iota -1}$ has diameter $\sup_{\bx, \by \in \rmC_{\bz}} \lno{\bx-\by}_{\iop} = 2^{-(\iota -1)}$ (in the $\sup$-norm) and the \interp{} $\rmg_{\rmC_{\bz}}$ is a $(\rmb,\beta)$-accurate approximation of $\rmf$ on $\rmC_{\bz}$ (by assumption), we have that $\babs{\rmf(\bz) - \rmg_{\rmC_{\bz}}(\bz)} \le \rmb \, 2^{- \beta (\iota -1)}$.
Thus
\[
    \babs{ \rmf(\bz) - \rma } 
\! \le \!
    \babs{ \rmf(\bz) - \rmg_{\rmC_{\bz}}(\bz) } 
    + \babs{ \rmg_{\rmC_{\bz}}(\bz) - \rma } 
\! \le \!
    2 \rmb 2^{-\beta (\iota -1)}
\]
and the right-hand side would be smaller than $\e$ ---proving~\eqref{e:second-inclusion-general}--- if either $\e \ge 2 \rmb$ (trivially), or in case $\e \in (0,2\rmb)$, if we could guarantee that the iteration $\iota = \iota(\rmn)$ during which the $\rmn$-th value of $\rmf$ is queried satisfies $\iota-1 \ge \bce{ (\nicefrac{1}{\beta}) \log_2 (\nicefrac{2\rmb}{\e}) } = \rmi(\e)$.
In other words, assuming without loss of generality that $\e \in (0,2\rmb)$ (so that $\rmi(\e) \ge 1$) and recalling that $\rmn > \rmn(\e)$, in order to prove \eqref{e:second-inclusion-general} we only need to check that the $\rmi(\e)$-th iteration is guaranteed to be concluded after at most $\rmn(\e)$ queries, where $\rmn(\e)$ is defined in terms of packing numbers in~\eqref{e:thm-n-eps}.
To see this, note that the total number of values of $\rmf$ that the algorithm queries by the end of iteration $\rmi(\e)$ is
$
    \sum_{\rmi=1}^{\rmi(\e)} \rmk \, \labs{ \cC'_{\rmi} } 
= 
    2^d \, \rmk \, \sum_{\rmi=1}^{\rmi(\e)} \labs{ \cC_{\rmi-1} }
= 
    2^d \, \rmk \, \sum_{\rmi=0}^{\rmi(\e) -1} \labs{ \cC_{\rmi} }
$.
To conclude the proof, it is now sufficient to show that for all iterations $\rmi \ge 0$, the number of \hyperc{}s maintained by the algorithm can be upper bounded by
\begin{equation}
    \label{e:final-bound-main-thm}
    \labs{ \cC_{\rmi} } 
\le
    2^\rmd \! \lim_{\delta \to 1^-}  \cN \Brb{ \bcb{ \labs{\rmf - \rma} \le 2 \, \rmb \, 2^{-\beta \rmi} }, \ \delta \, 2^{-\rmi} }
\;.
\end{equation}
Fix an arbitrary $\delta \in (0,1)$. 
If $\rmi = 0$, then 
$
    \labs{\cC_0} 
= 
    1 
\le
    \cN \brb{ \bcb{ \labs{\rmf - \rma} \le 2 \, \rmb }, \, \delta }$
by the definitions of $\cC_0 = \lcb{ \dcube }$ (initialization of Algorithm~\ref{alg:bi}) and $\delta$-packing number (Definition~\ref{d:packing-number}) of $\bcb{ \labs{\rmf - \rma} \le 2 \, \rmb }$ (which is non-empty because it contains $\fa$).
Fix any iteration $\rmi\in \bbN$.
By definition of $\cC_\rmi$ (line~\ref{a:update}), for all \hyperc{}s $\rmC\in \cC_\rmi$ there exists a point $\bx_\rmC \in \rmC$ such that $\labs{ \rmg_{\rmC}(\bx_\rmC) - \rma } \le \rmb \, 2^{-\beta \rmi}$.
Hence, for each \hyperc{} $\rmC \in \cC_\rmi$ there exists one of its points $\bx_\rmC \in \rmC$ such that $\labs{ \rmf(\bx_\rmC) - \rma }$ can be upper bounded by
\begin{equation}
\label{e:can-be-strengthened}
	\labs{ \rmf(\bx_\rmC) - \rmg_\rmC(\bx_\rmC) } + \labs{ \rmg_\rmC(\bx_\rmC) - \rma }
\le
	2 \, \rmb \, 2^{- \beta \rmi} \;,
\end{equation}
where, recalling that all \hyperc{}s in $\cC_i$ have diameter $2^{-\rmi}$ (in the $\sup$-norm), the bound on the term $\labs{ \rmf(\bx_\rmC) - \rmg_\rmC(\bx_\rmC) }$ is a consequence of $\rmg_\rmC$ being a $(\rmb,\beta)$-accurate approximation of $\rmf$ on $\rmC$.

Now we claim that the family of \hyperc{}s $\cC_\rmi$ can be partitioned into $2^\rmd$ subfamilies $\cC_{\rmi} (1), \ld, \cC_{\rmi} (2^\rmd)$ with the property that all distinct \hyperc{}s $\rmC\neq \rmC'$ belonging to the same family $\rmC_{\rmi}(\rmk)$ are strictly $(\delta \, 2^{-\rmi})$-separated (in the $\sup$-norm), i.e., that for all $\rmk \in \{1,\ld,2^{\rmd}\}$ and all $\rmC,\rmC' \in \cC_\rmi(\rmk)$, $\rmC\neq\rmC'$, we have $\inf_{\bx \in \rmC, \by \in \rmC'} \lno{ \bx - \by }_\iop > \delta \, 2^{-\rmi}$.

We defer the proof of this claim to Section~\ref{s:proofs-ba} of the \suppl{} (for an insightful picture, see Figure~\ref{fig:damier} in the same section).
Assume for now that it is true and fix an arbitrary $\rmk \in \{1,\ld,2^{\rmd}\}$.
Then, for all $\rmC \in \cC_\rmi(\rmk)$, there exists $\bx_\rmC$ such that~\eqref{e:can-be-strengthened} holds.
Therefore, we determined the existence of $\babs{\cC_\rmi(\rmk)}$-many $\brb{ \delta \, 2^{-\rmi} }$-separated points that are all included in $\bcb{ \labs{ \rmf - \rma } \le 2 \, \rmb \, 2^{-\beta \rmi} }$.
By definition of $\brb{ \delta \, 2^{-\rmi} }$-packing number of $\bcb{ \labs{\rmf - \rma} \le 2 \, \rmb \, 2^{- \beta \rmi} }$ (i.e., the \emph{largest} cardinality of a set of $\brb{ \delta \, 2^{-\rmi} }$-separated points included in $\bcb{ \labs{\rmf - \rma} \le 2 \, \rmb \, 2^{- \beta \rmi} }$ ---Definition~\ref{d:packing-number}), this implies that 
$
    \babs{ \cC_\rmi(\rmk) }
\le
    \cN \brb{ \bcb{ \labs{\rmf - \rma} \le 2 \, \rmb \, 2^{- \beta \rmi} },\ \delta \, 2^{-\rmi} }
$.
Recalling that $\cC_{\rmi} (1), \ld, \cC_{\rmi} (2^\rmd)$ is a partition of $\rmC_\rmi$, we then obtain
\[
    \babs{ \cC_\rmi }
=
    \sum_{\rmk = 1}^{2^{\rmd}} \babs{ \cC_\rmi (\rmk) }
\le
    2^\rmd \cN \Brb{ \bcb{ \labs{\rmf - \rma} \le 2 \rmb 2^{- \beta \rmi} }, \delta 2^{-\rmi} }
\]
which, after taking the infimum over $\delta \in (0,1)$ and by the monotonicity of the packing number $\rmr \mapsto \cN ( \rmE, \rmr )$ (for any $\rmE \s \dcube$), yields
\begin{align*}
    \babs{ \cC_\rmi }
& 
\le
    \inf_{\delta \in (0,1)} \bbrb{ 2^\rmd \, \cN \Brb{ \bcb{ \labs{\rmf - \rma} \le 2 \rmb 2^{- \beta \rmi} }, \delta 2^{-\rmi} } }
\\
&
=
    2^\rmd \, \lim_{\delta \to 1^-} \cN \Brb{ \bcb{ \labs{\rmf - \rma} \le 2 \, \rmb \, 2^{- \beta \rmi} },\ \delta \, 2^{-\rmi} } \;.
\end{align*}
This gives~\eqref{e:final-bound-main-thm} and concludes the proof.
\end{proof}
% By looking at the end of the proof of the previous result, one could see that the exponential term $4^\rmd$ in our bound~\eqref{e:thm-n-eps} could be lowered to $2^\rmd$ under the assumption that at any iteration $\rmi$, Algorithm~\ref{alg:bi} picks at least one $\bx_{\rmC} \in \rmC$ for each $\rmC \in \cC_{\rmi-1}$ such that $\lno{\bx_{\rmC_1} - \bx_{\rmC_2}}_\iop \ge 2^{-\rmi}$ for all distinct $\rmC_1,\rmC_2 \in \cC_{\rmi-1}$.
% Notably this property is enjoyed by all our \bi{} instances in Sections~\ref{s:holder},~\ref{s:g-hold-biga}.
By looking at the end of the proof of the previous result, one could see that the exponential term $4^\rmd$ in our bound~\eqref{e:thm-n-eps} could be lowered to $2^\rmd$ under the assumption that at any iteration $\rmi$, Algorithm~\ref{alg:bi} picks at least one $\bx_{\rmC'} \in \rmC'$ for each $\rmC' \in \cC_{\rmi}'$ such that $\lno{\bx_{\rmC_1'} - \bx_{\rmC_2'}}_\iop \ge 2^{-\rmi}$ for all distinct $\rmC_1',\rmC_2' \in \cC_{\rmi}'$.
Notably this property is enjoyed by all our \bi{} instances in Sections~\ref{s:holder},~\ref{s:g-hold-biga}.

% By this theorem, the goal now is to determine accurate \interp{}s for all functions in a given family.
% In the next section, we will see how this can be done for \hold{} functions.

% \section{Near-optimal rates for \hold{} functions}
\section{H\"OLDER FUNCTIONS}
\label{s:holder}

In this section, we focus on \hold{} functions, and we present a \bi{} instance that is \nearo{} for determining their level sets.
\begin{definition}[\hold{} function]
\label{d:hold}
Let $\rmc>0$, $\gamma \in (0,1]$, and $\rmE \s \dcube$. We say that a function $\rmf\colon \rmE \to \bbR$ is $(\rmc, \gamma)$-\emph{\hold{}} (with respect to the $\sup$-norm $\lno{\cdot}_\iop$) if 
$
    \babs{ \rmf(\bx) - \rmf(\by) } \le \rmc \, \lno{ \bx - \by }_\iop^\gamma
$, 
for all $\bx,\by \in \rmE$.
\end{definition}

Our \bi{} instance for \hold{} functions (\bih, Algorithm~\ref{alg:bi-h}) runs Algorithm~\ref{alg:bi} with $k=1$, $\rmb = \rmc$, $\beta = \gamma$. 
The local \interp{}s $\rmg_{\rmC'}$ are constant and equal to the value $\rmf(\bc_{\rmC'})$ at the center $\bc_{\rmC'}$ of $\rmC'$. 
In particular, the output set $\rmS_\rmn$ is now the entire union of all \hyperc{}s determined in the latest completed iteration.

\begin{algorithm2e}
\DontPrintSemicolon
\SetKwInput{KwIn}{input}
\SetKwInput{kwInit}{init}
\KwIn{level $\rma \in \bbR$, \toll{} $\rmc>0$, $\gamma \in (0,1]$ }
\kwInit{$\rmD \gets \dcube$, $\cC_{0}\gets \{ \rmD \}$, $\rmn \gets 0$ }
    \For%(\tcp*[f]{insert comment here})
    {%\nllabel{a:my-label}
        iteration $\rmi=1,2,\ld$
    }
    {
    	$\rmS(\rmi) \gets \bigcup_{\rmC \in \cC_{\rmi-1}} \rmC$\;%\tcp*{output set for iteration $i$}
    	let $\cC_{\rmi}' \gets \bisect{\cC_{\rmi-1}}$\nllabel{a:bisect-h}\;%\tcp*{bisection of the old family of \hyperc{}s}
        \For
        {%
          \textbf{\upshape each} \hyperc{} $\rmC'\in\cC_{i}'$
        }
        {
            update $\rmn \gets \rmn + 1$\;
            pick the center $\bc_{\rmC'}$ of $\rmC'$ as the next $\bx_\rmn$\nllabel{a:pick-h}\;
            observe $\rmf(\bx_\rmn)$\nllabel{a:observe-h}\;
            output $\rmS_\rmn \gets \rmS(\rmi)$\nllabel{a:output-h}\;
        }
        % compute the \interp{} $\rmg_\rmi \gets \sum_{\rmC' \in \cC'_\rmi} \rmf(\bc_{\rmC'}) \, \bbI_{\rmC'}$\nllabel{a:interp-h}\;
        $\cC_{\rmi}\gets \bcb{ \rmC'\in\cC_{\rmi}' : \babs{ \rmf (\bc_{\rmC'}) - \rma } \le \rmc \, 2^{- \gamma \rmi} }$\nllabel{a:update-h}\;
    }
\caption{\label{alg:bi-h} \bi{} for \hold{} Functions (\bih)}
\end{algorithm2e}

% \subsection{Worst-Case Sample Complexity}

The next result shows that the optimal worst-case sample complexity of the level set approximation of \hold{} functions is of order $1/\e^{\rmd/\gamma}$, and it is attained by \bih{} (Algorithm~\ref{alg:bi-h}).

\begin{theorem}
\label{t:bah-rate}
Let $\rma \in \bbR, \rmc>0, \gamma \in (0,1]$, and $\rmf \colon \dcube \to \bbR$ be any $(\rmc,\gamma)$-\hold{} function with level set $\fa \neq \varnothing$.
Fix any accuracy $\e > 0$. 
Then, there exists $\kappa_1>0$ (independent of $\e$) such that, for all $\rmn > \kappa_1 / \e^{\rmd/\gamma}$, the output $\rmS_\rmn$ returned by \bih{} after the $\rmn$-th query is an $\e$-approximation of $\fa$.

Moreover, there exists $\kappa_2$ (independent of $\e$) such that no deterministic algorithm can guarantee to output an $\e$-approximation of the level set $\fa$ for all $(\rmc,\gamma)$-\hold{} functions $\rmf$, querying less than $\kappa_2 / \e^{\rmd/\gamma}$ of their values.
\end{theorem}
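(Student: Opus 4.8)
The plan is to obtain the upper bound as an almost immediate consequence of Theorem~\ref{t:general-packing}, and the lower bound from an adversarial fooling argument over a fine grid of bump functions. For the upper bound, note first that \bih{} is precisely \bi{} (Algorithm~\ref{alg:bi}) with $\rmk=1$, $\rmb=\rmc$, $\beta=\gamma$ and the constant \interp{}s $\rmg_{\rmC'}\equiv\rmf(\bc_{\rmC'})$, so the only hypothesis of Theorem~\ref{t:general-packing} left to check is that these \interp{}s are $(\rmc,\gamma)$-accurate (Definition~\ref{d:accurate-approx}); but for $\bx\in\rmC'$ the \hold{} property gives $\babs{\rmg_{\rmC'}(\bx)-\rmf(\bx)}=\babs{\rmf(\bc_{\rmC'})-\rmf(\bx)}\le\rmc\,\lno{\bc_{\rmC'}-\bx}_\iop^\gamma\le\rmc\brb{\sup_{\bu,\bv\in\rmC'}\lno{\bu-\bv}_\iop}^\gamma$, since $\bc_{\rmC'},\bx\in\rmC'$. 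It then remains only to bound $\rmn(\e)$ from \eqref{e:thm-n-eps} by $\kappa_1/\e^{\rmd/\gamma}$, for which I would use nothing more than the crude packing estimate $\cN(\rmE,\rmr)\le\cN(\dcube,\rmr)\le\lce{1/\rmr}^\rmd$ valid for every $\rmE\s\dcube$ (tile $\dcube$ by $\lce{1/\rmr}^\rmd$ grid \hyperc{}s of side $\rmr$, each holding at most one $\rmr$-separated point). With $\rmr=\delta\,2^{-\rmi}$ and $\delta\to1^-$, each summand of \eqref{e:thm-n-eps} is at most $(2^\rmi+1)^\rmd\le 2^{\rmd(\rmi+1)}$, so $\rmn(\e)\le 4^\rmd\sum_{\rmi=0}^{\rmi(\e)-1}2^{\rmd(\rmi+1)}\le\frac{8^\rmd}{2^\rmd-1}\,2^{\rmd\,\rmi(\e)}$; plugging $\rmi(\e)=\bce{(1/\gamma)\log_2(2\rmc/\e)}\le(1/\gamma)\log_2(2\rmc/\e)+1$ makes the dominant term of order $(2\rmc/\e)^{\rmd/\gamma}$, yielding a suitable $\kappa_1$ (the case $\e\ge 2\rmc$ being trivial, as then $\rmn(\e)=0$).

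For the lower bound I would exhibit a family of hard instances at the scale $\rms:=(2\e/\rmc)^{1/\gamma}$. Pack into $\dcube$ a collection of $\rmM\ge\kappa_2/\e^{\rmd/\gamma}$ \hyperc{}s $\rmQ_1,\ld,\rmQ_\rmM$ of side $2\rms$ with pairwise disjoint closures (a regular sub-grid provides $\asymp(1/\rms)^\rmd\asymp(\rmc/\e)^{\rmd/\gamma}$ of them), and on each $\rmQ_\rmj$ put the ``tent'' $\varphi_\rmj(\bx):=2\e\,\max\bcb{0,\,1-\rms^{-1}\lno{\bx-\bc_\rmj}_\iop}$, which is supported inside $\rmQ_\rmj$, vanishes on its boundary, and equals $2\e$ at the center $\bc_\rmj$. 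Using $\min\{1,t\}\le t^\gamma$ for $t\ge0$, a one-line computation gives $\babs{\varphi_\rmj(\bx)-\varphi_\rmj(\by)}\le(2\e/\rms^\gamma)\lno{\bx-\by}_\iop^\gamma=\rmc\,\lno{\bx-\by}_\iop^\gamma$, so $\varphi_\rmj$ --- hence also $\rmf_\rmj:=\rma+\varphi_\rmj$, and trivially the constant $\rmf_0\equiv\rma$ --- is $(\rmc,\gamma)$-\hold{} with nonempty level set.

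Now take any deterministic algorithm making $\rmn<\rmM$ queries and run it on $\rmf_0$. By pigeonhole some \hyperc{} $\rmQ_{\rmj^\star}$ contains none of the $\rmn$ query points; since $\rmf_{\rmj^\star}$ and $\rmf_0$ agree off $\rmQ_{\rmj^\star}$, an induction on the round number shows the algorithm produces exactly the same queries and the same output $\rmS_\rmn$ whether it runs on $\rmf_0$ or on $\rmf_{\rmj^\star}$ (the observed values coincide at every step). Correctness on $\rmf_0$ would force $\dcube=\{\rmf_0=\rma\}\s\rmS_\rmn\s\bcb{\labs{\rmf_0-\rma}\le\e}=\dcube$, hence $\rmS_\rmn=\dcube\ni\bc_{\rmj^\star}$; but $\rmf_{\rmj^\star}(\bc_{\rmj^\star})=\rma+2\e$ puts $\bc_{\rmj^\star}\notin\bcb{\labs{\rmf_{\rmj^\star}-\rma}\le\e}$, so $\rmS_\rmn$ cannot be an $\e$-approximation of $\{\rmf_{\rmj^\star}=\rma\}$. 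The algorithm therefore fails on one of the two $(\rmc,\gamma)$-\hold{} functions $\rmf_0,\rmf_{\rmj^\star}$, which establishes the claimed $\kappa_2/\e^{\rmd/\gamma}$ lower bound.

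The upper bound costs almost nothing once Theorem~\ref{t:general-packing} is available, so I expect the real work to be in the lower bound: calibrating the cube side $\rms$ so that bumps of height $\asymp\e$ exactly fit the class $(\rmc,\gamma)$-\hold{} while still leaving room for $\asymp\e^{-\rmd/\gamma}$ disjoint cubes, and making rigorous the indistinguishability of $\rmf_0$ from $\rmf_{\rmj^\star}$ under an adaptive deterministic strategy --- together with the minor but necessary observation that the constant function $\rmf_0$ legitimately belongs to the \hold{} class, so that the forced output $\rmS_\rmn=\dcube$ can indeed be turned against the algorithm.
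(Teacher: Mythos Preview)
Your proposal is correct and follows essentially the same route as the paper. The upper bound is identical in spirit: verify that the constant \interp{}s are $(\rmc,\gamma)$-accurate, invoke Theorem~\ref{t:general-packing}, and bound the packing numbers in~\eqref{e:thm-n-eps} by the trivial $\cN(\dcube,\rmr)$ estimate, then sum the resulting geometric series.

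For the lower bound, the structure --- pack $\asymp(1/\e)^{\rmd/\gamma}$ disjoint bumps of height $2\e$, observe that a deterministic algorithm with fewer queries misses one of them entirely, and conclude it cannot distinguish the constant function from the corresponding perturbation --- is exactly the paper's argument. The one genuine difference is your choice of perturbation: you use piecewise-linear tents $2\e\max\{0,1-\rms^{-1}\lno{\bx-\bc_\rmj}_\iop\}$ and the neat inequality $\min\{1,t\}\le t^\gamma$ to get the $(\rmc,\gamma)$-\hold{} bound directly, whereas the paper uses smooth bump functions $\exp\brb{-\rmx^2/(1-\rmx^2)}$. The paper explicitly remarks (just after stating the lower-bound proposition) that spike-type functions work and even yield a better constant; it opts for smooth bumps only because the same family is reused in the \gradho{} lower bound of Section~\ref{s:g-hold-biga}, where piecewise-linear tents would not be smooth enough. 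So your variant is a legitimate, slightly more elementary alternative for this particular theorem, at the cost of not being reusable for higher-order smoothness.
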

The proof is deferred to Section~\ref{s:proofs-holder} in the \suppl{}. 
The upper bound is an application of Theorem~\ref{t:general-packing}.
The lower bound is proven by showing that no algorithm can distinguish between the function $\rmf\equiv 0$ and a function that is non-zero only on a small ball, on which it attains the value $2\e$.
We use a classical construction with bump functions that appears, e.g., in Theorem~3.2 of \citet{GyKoKrWa-02-DistributionFreeNonparametric} for nonparametric regression lower bounds.

We remark that the rate in the previous result is the same as that of a naive uniform grid filling of the space with step-size of order $\e^{1/\gamma}$.
While this rate cannot be improved in the worst case, the leading constant of our sequential algorithm may be better if a large fraction of the input space can be rejected quickly.
More importantly, we will see in Section~\ref{s:adapt-small-nls} that (slightly) better rates can be attained by \bi{} algorithms if the inflated level sets of the target function $\rmf$ are smaller (as it happens, e.g., if $\rmf$ is convex with proper level set $\fa$).

In the following section, we will also investigate if and to what extent higher smoothness helps.
To this end, we will switch our focus to differentiable functions with \hold{} gradients.

\section{\texorpdfstring{$\nabla$-H\"OLDER FUNCTIONS: \biga{} ALGORITHM \& ANALYSIS}{GRADIENT-H\"OLDER FUNCTIONS: \biga{} ALGORITHM \& ANALYSIS}}

\label{s:g-hold-biga}

In this section, we focus on differentiable functions with \hold{} gradients, and we present a \bi{} instance that is \nearo{} for determining their level sets.
\begin{definition}[\Gradho{}/\lip{} function]
Let $\rmc_1 > 0$, $\gamma_1 \in (0,1]$, and $\rmE \s \dcube$.
We say that a function $\rmf \colon \rmE \to \bbR$ is $(\rmc_1, \gamma_1)$-\emph{\gradho{}} (with respect to $\lno{\cdot}_\iop$) if it is the restriction\footnote{They are defined as restrictions of continuously differentiable functions in order to have simply and well-defined gradients on the boundary of their domains.} (to $\rmE$) of a continuously differentiable function defined on $\bbR^\rmd$ such that $\bno{ \nabla \rmf(\bx) - \nabla \rmf(\by) }_\iop \le \rmc_1 \, \lno{ \bx - \by }_\iop^{\gamma_1}$ for all $\bx, \by \in \rmE$.
If $\gamma_1 = 1$, we say that $\rmf$ is $\rmc_1$-\emph{\gradlip{}}.
\end{definition}
The next lemma introduces the polynomial \interp{}s that will be used by our \bi{} instance and it shows that they are $(\rmc_1\rmd, 1+\gamma_1)$-accurate approximations of $\rmf$ on all \hyperc{}s.
\begin{lemma}[\biga{} \interp{}s]
\label{l:interp-biga}
Let $\rmf\colon \rmC' \to \bbR$ be a $(\rmc_1,\gamma_1)$-\gradho{} function, for some $\rmc_1>0$ and $\gamma_1\in (0,1]$.
Let $\rmC' \s \dcube$ be a \hyperc{} with diameter $\fd \in (0,1]$ and set of vertices $\rmV'$, i.e., $\rmC' = \prod_{\rmj=1}^\rmd \lsb{ \rmu_\rmj, \rmu_\rmj + \fd }$, for some $\bu := (\rmu_1,\ld,\rmu_\rmd) \in [0, 1 - \fd]^\rmd$, and $\rmV' = \prod_{\rmj=1}^\rmd \lcb{ \rmu_\rmj, \rmu_\rmj + \fd }$. 
The function 
\begin{align}
	\rmh_{\rmC'}\colon \rmC'
&
	\to \bbR
\nonumber
\\[-2ex]
	\label{e:interp-for-biga-new}
	\bx
&
	\mapsto 
	\sum_{\bv \in \rmV'}
	\rmf(\bv)
	\prod_{\rmj = 1}^{\rmd}
	\rmp_{\rmv_{\rmj}}(\rmx_{\rmj})\;,
\end{align}
where
\[
    \rmp_{\rmv_{\rmj}}(\rmx_{\rmj})
:=
	\lrb{ 1 - \frac{\rmx_{\rmj} - \rmu_\rmj}{\fd} } 
	\, \bbI_{\rmv_\rmj = \rmu_\rmj}
	+ \frac{\rmx_\rmj - \rmu_\rmj}{\fd} 
	\, \bbI_{\rmv_\rmj = \rmu_\rmj + \fd} ,
\]
interpolates the $2^\rmd$ pairs $\bcb{ \lrb{ \bv, \rmf (\bv) } }_{\bv \in \rmV'}$ and it satisfies
\[
	\sup_{\bx \in \rmC'} \babs{ \rmh_{\rmC'}(\bx) - \rmf(\bx) } 
\le 
% 	\rmc_1 \rmd \, \lrb{ \sup_{\bx,\by \in \rmC'} \lno{ \bx - \by }_\iop }^{1+\gamma_1}
% =
	\rmc_1 \rmd \, \fd^{1+\gamma_1}\;.
\]
\end{lemma}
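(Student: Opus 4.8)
The plan is to prove the interpolation property directly and then control the approximation error by viewing $\rmh_{\rmC'}(\bx)$ as a weighted average of the vertex values and combining the mean-matching property of the weights with a first-order Taylor expansion.

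First I would record the structure of the coefficients. For every coordinate $\rmj$ and every $\rmx_\rmj \in [\rmu_\rmj,\rmu_\rmj+\fd]$, the two numbers $\rmp_{\rmu_\rmj}(\rmx_\rmj) = 1 - (\rmx_\rmj-\rmu_\rmj)/\fd$ and $\rmp_{\rmu_\rmj+\fd}(\rmx_\rmj) = (\rmx_\rmj-\rmu_\rmj)/\fd$ are nonnegative and sum to $1$; hence, for fixed $\bx \in \rmC'$, the weights $\rmw_\bv := \prod_{\rmj=1}^\rmd \rmp_{\rmv_\rmj}(\rmx_\rmj)$, $\bv \in \rmV'$, are nonnegative and, by expanding the product of sums, satisfy $\sum_{\bv\in\rmV'}\rmw_\bv = \prod_{\rmj=1}^\rmd \brb{ \rmp_{\rmu_\rmj}(\rmx_\rmj)+\rmp_{\rmu_\rmj+\fd}(\rmx_\rmj) } = 1$ and, coordinatewise, $\sum_{\bv\in\rmV'}\rmw_\bv \, \rmv_\rmj = \rmu_\rmj + \fd\,(\rmx_\rmj-\rmu_\rmj)/\fd = \rmx_\rmj$; that is, $\rmh_{\rmC'}(\bx) = \sum_{\bv\in\rmV'}\rmw_\bv\,\rmf(\bv)$ with $\sum_{\bv}\rmw_\bv\,\bv = \bx$. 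Specializing to a vertex $\bx = \bw \in \rmV'$ makes each factor $\rmp_{\rmv_\rmj}(\rmw_\rmj)$ equal to $\bbI_{\rmv_\rmj = \rmw_\rmj}$, so only the term $\bv = \bw$ survives and $\rmh_{\rmC'}(\bw) = \rmf(\bw)$, which is the claimed interpolation property.

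For the error bound, fix $\bx \in \rmC'$. Since $\rmf$ is the restriction of a continuously differentiable function and $\rmC'$ is convex, for each $\bv \in \rmV'$ the segment $[\bx,\bv]$ lies in $\rmC'$ and $\rmf(\bv) - \rmf(\bx) = \ban{ \nabla\rmf(\bx),\,\bv-\bx } + \rmr(\bv)$ with $\rmr(\bv) = \int_0^1 \ban{ \nabla\rmf(\bx+\rms(\bv-\bx)) - \nabla\rmf(\bx),\,\bv-\bx } \dif\rms$. Using the $\lno{\cdot}_\iop$–$\lno{\cdot}_1$ Hölder inequality for the inner product and the $(\rmc_1,\gamma_1)$-\gradho{} hypothesis (valid on $\rmC'$), one gets $\babs{\rmr(\bv)} \le \int_0^1 \rmc_1\,\lno{\rms(\bv-\bx)}_\iop^{\gamma_1}\,\lno{\bv-\bx}_1 \dif\rms = \frac{\rmc_1}{1+\gamma_1}\,\lno{\bv-\bx}_\iop^{\gamma_1}\,\lno{\bv-\bx}_1$. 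Averaging the expansion against the weights $\rmw_\bv$ and using $\sum_\bv \rmw_\bv\,\bv = \bx$ to kill the linear term,
\[
    \babs{ \rmh_{\rmC'}(\bx) - \rmf(\bx) }
=
    \Babs{ \sum_{\bv\in\rmV'} \rmw_\bv \brb{ \rmf(\bv) - \rmf(\bx) - \ban{ \nabla\rmf(\bx),\,\bv-\bx } } }
\le
    \frac{\rmc_1}{1+\gamma_1} \sum_{\bv\in\rmV'} \rmw_\bv\,\lno{\bv-\bx}_\iop^{\gamma_1}\,\lno{\bv-\bx}_1 \;.
\]
Finally, since $\bv$ and $\bx$ both lie in $\rmC'$, which has diameter $\fd$, we have $\lno{\bv-\bx}_\iop \le \fd$ and $\lno{\bv-\bx}_1 \le \rmd\,\fd$, so the right-hand side is at most $\frac{\rmc_1\rmd}{1+\gamma_1}\,\fd^{1+\gamma_1} \le \rmc_1\rmd\,\fd^{1+\gamma_1}$ (using $1+\gamma_1 \ge 1$); taking the supremum over $\bx\in\rmC'$ gives the lemma.

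The only mildly delicate step is the Taylor remainder estimate: one must pair the $\sup$-norm Hölder bound on $\nabla\rmf$ with the $\ell_1$-norm of the displacement $\bv-\bx$ and integrate the factor $\rms^{\gamma_1}$, which is precisely what yields the $1/(1+\gamma_1)$ and, via $\lno{\bv-\bx}_1 \le \rmd\,\fd$, the dimension factor $\rmd$. Everything else — nonnegativity of the weights, the mean-matching identity, and the interpolation property — is immediate.
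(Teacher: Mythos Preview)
Your proof is correct and takes a genuinely different route from the paper's. The paper argues by induction on the number $\rmk$ of coordinates of $\bx$ that lie strictly inside $(\rmu_\rmj,\rmu_\rmj+\fd)$: at the inductive step it freezes all but one such coordinate, writes $\rmh_{\rmC'}$ as an affine function of that single variable, applies the mean value theorem twice to $\rmf$ along that axis, and picks up one factor $\rmc_1\fd^{1+\gamma_1}$ per coordinate, yielding $\rmc_1\rmk\fd^{1+\gamma_1}\le \rmc_1\rmd\fd^{1+\gamma_1}$. You instead exploit the barycentric identity $\sum_{\bv}\rmw_\bv\,\bv=\bx$ of the multilinear weights to cancel the first-order Taylor term in one stroke, then bound the integral remainder globally via $\lno{\bv-\bx}_\iop\le\fd$ and $\lno{\bv-\bx}_1\le\rmd\fd$.

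Your argument is shorter and even yields the slightly sharper constant $\rmc_1\rmd/(1+\gamma_1)$. The paper's coordinate-peeling approach, on the other hand, is a bit more elementary (it uses only the one-variable mean value theorem, not the integral remainder) and makes transparent where the factor $\rmd$ comes from: one unit of error per coordinate direction, which also shows that points with fewer ``interior'' coordinates are approximated more accurately.
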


The technical proof of the previous lemma is deferred to Section~\ref{s:proofs-gradho} of the \suppl{}.

Our \bai{} instance for \gradho{} functions (\biga) runs Algorithm~\ref{alg:bi} with $k=2^\rmd$, $\rmb = \rmc_1\rmd$, and $\beta = 1+\gamma_1$. 
The local \interp{} $\rmh_{\rmC'}$ (defined in~\eqref{e:interp-for-biga-new}) are computed by querying the values of $\rmf$ at all vertices of $\rmC'$.
% If $\rma \in \bbR$ is the target level and $\rmf$ is $(\rmc_1,\gamma_1)$-\gradho{}, our \bi{} instance for \gradho{} functions (\biga) runs Algorithm~\ref{alg:bi} with input $\rma, 2^\rmd, \rmc_1 \rmd, 1+\gamma_1$. 
% More precisely, at each iteration $\rmi$, \biga{} picks as query points all the vertices of each \hyperc{} $\rmC' \in \cC'_\rmi$, approximates $\rmf$ on $\rmC'$ with a carefully chosen polynomial function (defined in \eqref{e:interp-for-biga-new}), and sets the \tol{} parameters to $\rmb = \rmc_1\rmd$ and $\beta = 1+\gamma_1$, in accordance with the \hold{} constants of $\nabla\rmf$.
Note that line~\ref{a:update-g} of Algorithm~\ref{alg:bi-g} can be carried out efficiently since it is sufficient to check the condition on $\babs{ \rmh_{\rmC'} (\bx) - \rma }$ at the vertices $\bx$ of $\rmC'$.\footnote{Indeed, only three cases can occur. We set $\rho = \rmc_1 \rmd \, 2^{- (1+\gamma_1) \rmi}$. Case~1: if one of the vertices $\bx$ satisfies $\babs{ \rmh_{\rmC'} (\bx) - \rma } \le \rho$, then the condition is checked. Case~2: if the values $\rmh_{\rmC'} (\bx)$ at the vertices are all strictly below $a-\rho$ or all strictly above $a+\rho$, then it is also the case for all $\bx \in \rmC'$, since $\rmh_{\rmC'}(\bx)$ is a convex combination of all values at the vertices; so the condition is not checked. Case~3: if there are two vertices $\bx$ and $\by$ such that $\rmh_{\rmC'} (\bx)<a-\rho$ and $\rmh_{\rmC'} (\by)>a+\rho$, then there exists $\bz \in \rmC'$ such that $\babs{ \rmh_{\rmC'} (\bz) - \rma } \le \rho$ by continuity of $\rmh_{\rmC'}$ on $\rmC'$; so the condition is checked.}
Also, note that the output set $\rmS_\rmn$ is the union over \hyperc{}s of pre-images of segments from the polynomial functions in \eqref{e:interp-for-biga-new}.

\begin{algorithm2e}
\DontPrintSemicolon
\SetKwInput{KwIn}{input}
\SetKwInput{kwInit}{init}
\KwIn{level $\rma \in \bbR$, \toll{} $\rmc_1>0$, $\gamma_1 \in (0,1]$  }
\kwInit{$\rmD \gets \dcube$, $\cC_{0}\gets \{ \rmD \}$, $\rmh_\rmD \equiv \rma$, $\rmn \gets 0$ }
    \For%(\tcp*[f]{insert comment here})
    {%\nllabel{a:my-label}
        iteration $\rmi=1,2,\ld$
    }
    {
        $\rmS{(\rmi)}  \gets \bigcup_{\rmC \in \cC_{i-1}} \bcb{ \bx \in \rmC : \babs{ \rmh_{\rmC} (\bx) - \rma } \le \rmc_1 \rmd \, 2^{- (1+\gamma_1) (\rmi-1)} }$\nllabel{a:output-def-g}\;
        $\cC_{\rmi}' \gets \bisect{\cC_{\rmi-1}}$\nllabel{a:bisect-g}\;
        \For
        {%
          \textbf{\upshape each} \hyperc{} $\rmC'\in\cC_{i}'$
        }
        {
        	let $\rmV' \s \rmC'$ be the set of vertices of $\rmC'$\;
            \For
            {%
                \textbf{\upshape each} vertex $\bv \in \rmV'$
            }
            {
                update $\rmn \gets \rmn + 1$\;
                pick vertex $\bv \in \rmV'$ as the next $\bx_\rmn$\nllabel{a:pick-g}\;
                observe $\rmf(\bx_\rmn)$\nllabel{a:observe-g}\;
                output $\rmS_\rmn \gets \rmS{(\rmi)}$\nllabel{a:output-g}\;
            }
		interpolate the $2^\rmd$ pairs $\bcb{ \brb{ \bv, \rmf(\bv) } }_{\bv \in \rmV'}$ with $\rmh_{\rmC'} \colon \rmC' \to \bbR$ given by \eqref{e:interp-for-biga-new}\nllabel{a:loc-interp-g}\;
        }
        update $\cC_{\rmi}\gets \bcb{ \rmC'\in\cC_{\rmi}' : \text{there exists } \bx \in \rmC' \text{ such that }  \babs{ \rmh_{\rmC'} (\bx) - \rma } \le \rmc_1 \rmd \, 2^{- (1+\gamma_1) \rmi} }$\nllabel{a:update-g}\;
        % If $\lambda(n)$ is the index of the last iteration $i$ which was completed after $f$ is evaluated for the $n$-th time, let $S_n = \bigcup_{A\in \cA_{\lambda(n)}} A$
    }
\caption{\label{alg:bi-g} \bi{} for $\nabla$-\hold{} $\rmf$ (\biga)}
\end{algorithm2e}

\subsection{Worst-Case Sample Complexity}

The next result shows that the optimal worst-case sample complexity of the level set approximation of \gradho{} functions is of order $1/\e^{\rmd/(1+\gamma_1)}$, and it is attained by \biga{} (Algorithm~\ref{alg:bi-g}).

\begin{theorem}
\label{t:biga-rate}
Let $\rma \in \bbR, \rmc_1>0, \gamma_1 \in (0,1]$, and $\rmf \colon \dcube \to \bbR$ be any $(\rmc_1,\gamma_1)$-\gradho{} function with level set $\fa \neq \varnothing$.
Fix any accuracy $\e > 0$. 
Then, there exists $\kappa_1 >0$ (independent of $\e$) such that, for all  $\rmn > \kappa_1 / \e^{\rmd/(1+\gamma_1)}$, 
% $\kappa_1 := \brb{ 2^{ 5 + 4 \gamma_1 + (1+\gamma_1)/\rmd} \, \rmc_1  \rmd }^{\rmd/\gamma}$,
the output $\rmS_\rmn$ returned by \biga{} after the $\rmn$-th query is an $\e$-approximation of $\fa$.

Moreover, there exists $\kappa_2 >0$ (independent of $\e$) such that no deterministic algorithm can guarantee to output an $\e$-approximation of the level set $\fa$ for all $(\rmc_1,\gamma_1)$-\gradho{} functions $\rmf$, querying less than $\kappa_2 / \e^{\rmd/(1+\gamma_1)}$ of their values.
\end{theorem}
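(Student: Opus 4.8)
The plan is to obtain the upper bound directly from Theorem~\ref{t:general-packing} together with Lemma~\ref{l:interp-biga}, and the lower bound from an indistinguishability argument built on $\nabla$-\hold{} bump functions, in the same spirit as the proof of Theorem~\ref{t:bah-rate}. \emph{Upper bound.} First I would invoke Lemma~\ref{l:interp-biga}: on every \hyperc{} of diameter at most~$1$, the polynomial \interp{} $\rmh_{\rmC'}$ used by \biga{} is a $(\rmc_1\rmd,\,1+\gamma_1)$-accurate approximation of $\rmf$ (Definition~\ref{d:accurate-approx}), so Theorem~\ref{t:general-packing} applies with $\rmb=\rmc_1\rmd$, $\beta=1+\gamma_1$, $\rmk=2^\rmd$. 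It then remains to bound the quantity $\rmn(\e)$ of~\eqref{e:thm-n-eps} by a constant times $1/\e^{\rmd/(1+\gamma_1)}$, and for this no structural information on the inflated level sets is needed beyond $\bcb{\labs{\rmf-\rma}\le 2\rmc_1\rmd\,2^{-(1+\gamma_1)\rmi}}\s\dcube$: the $(\delta 2^{-\rmi})$-packing number of $\dcube$ in the $\sup$-norm is $O(2^{\rmi\rmd})$ uniformly in $\delta\in(0,1)$, so the sum in~\eqref{e:thm-n-eps} is a geometric series dominated by its last term, which is of order $2^{\rmd\,\rmi(\e)}$. Since $\rmi(\e)=\bce{(1/(1+\gamma_1))\log_2(2\rmc_1\rmd/\e)}$, this yields $\rmn(\e)\le\kappa_1/\e^{\rmd/(1+\gamma_1)}$ for an explicit $\kappa_1=\kappa_1(\rmd,\rmc_1,\gamma_1)$, and Theorem~\ref{t:general-packing} then gives the first claim.

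\emph{Lower bound.} For the second claim I would fix a deterministic algorithm with query budget $\rmN$ and argue by contradiction. Take once and for all a $\rmC^\infty$ profile $\psi\colon\bbR^\rmd\to[0,1]$ supported in $[-1,1]^\rmd$ with $\psi(\bzero)=1$; since $\psi$ is smooth and compactly supported, $\nabla\psi$ is \lip{}, hence $(\Cs,\gamma_1)$-\hold{} on $\bbR^\rmd$ for some $\Cs=\Cs(\psi,\gamma_1)$. Setting $\rho:=(2\Cs\e/\rmc_1)^{1/(1+\gamma_1)}$, for each center $\bz\in\dcube$ the function $\rmf_{\bz}\colon\bx\mapsto\rma-2\e+2\e\,\psi((\bx-\bz)/\rho)$ has gradient with $\sup$-norm \hold{} seminorm equal to $2\e\Cs/\rho^{1+\gamma_1}=\rmc_1$, so $\rmf_{\bz}$ (restricted to $\dcube$) is $(\rmc_1,\gamma_1)$-\gradho{}, takes values in $[\rma-2\e,\rma]$, and satisfies $\bz\in\{\rmf_{\bz}=\rma\}\neq\varnothing$. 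The constant $\rmf_0\equiv\rma-2\e$ is also $(\rmc_1,\gamma_1)$-\gradho{}, and since $\labs{\rmf_0-\rma}\equiv2\e>\e$ its only $\e$-approximation (Definition~\ref{def:eps-approx}) is $\varnothing$. I would then pack $\rmM$ pairwise disjoint $\sup$-balls of radius $\rho$ with centers in $\dcube$, with $\rmM\gtrsim\rho^{-\rmd}\asymp 1/\e^{\rmd/(1+\gamma_1)}$. Running the algorithm on $\rmf_0$ makes all observations equal to $\rma-2\e$, so its first $\rmN$ query points are fixed; if $\rmN<\rmM$, some ball with center $\bz^\star$ avoids all of them. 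Since $\rmf_{\bz^\star}$ coincides with $\rmf_0$ off that ball (the bump vanishes there), the algorithm run on $\rmf_{\bz^\star}$ makes the same first $\rmN$ queries and returns the same $\rmS_\rmN$; but $\rmS_\rmN=\varnothing$ by the $\rmf_0$ case, whereas $\bz^\star\in\{\rmf_{\bz^\star}=\rma\}$ must lie in $\rmS_\rmN$ — a contradiction. Choosing $\kappa_2=\kappa_2(\rmd,\rmc_1,\gamma_1)$ so that $\rmN<\kappa_2/\e^{\rmd/(1+\gamma_1)}$ forces $\rmN<\rmM$ completes the argument (when $\e$ is so large that $\rmM\le1$, the statement is vacuous).

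\emph{Main obstacle.} The upper bound is essentially bookkeeping once Theorem~\ref{t:general-packing} and Lemma~\ref{l:interp-biga} are available. The delicate part is the calibration of the bump in the lower bound: I need a fixed $\nabla$-\hold{} profile whose rescaling to width $\rho$ and height $2\e$ has gradient \hold{} seminorm of order $\e/\rho^{1+\gamma_1}$ — so that equating it to $\rmc_1$ forces $\rho\asymp\e^{1/(1+\gamma_1)}$ — together with the verification that restricting such bumps to $\dcube$ and packing $\asymp\rho^{-\rmd}$ of them disjointly is legitimate. This step is what actually produces the exponent $\rmd/(1+\gamma_1)$, and it is the one I would write most carefully.
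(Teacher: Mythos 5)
Your upper bound is the paper's proof: Lemma~\ref{l:interp-biga} certifies the $(\rmc_1\rmd,\,1+\gamma_1)$-accuracy of the \biga{} \interp{}s, Theorem~\ref{t:general-packing} applies with $(\rmk,\rmb,\beta)=(2^\rmd,\rmc_1\rmd,1+\gamma_1)$, the packing numbers in~\eqref{e:thm-n-eps} are bounded crudely by $\cN\brb{\dcube,\delta 2^{-\rmi}}\le(2^\rmi/\delta+1)^\rmd$ (Lemma~\ref{l:pack-unit-cube}), and the resulting geometric sum is dominated by its last term, of order $2^{\rmd\,\rmi(\e)}\asymp\e^{-\rmd/(1+\gamma_1)}$. (Your phrase ``$O(2^{\rmi\rmd})$ uniformly in $\delta\in(0,1)$'' is slightly off --- the bound $(2^\rmi/\delta+1)^\rmd$ diverges as $\delta\to 0^+$ --- but only $\delta\to 1^-$ is needed, so this is harmless.)

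For the lower bound you follow the same needle-in-haystack template as Propositions~\ref{p:lower-hold} and~\ref{p:lower-grad}, with one real deviation: the baseline. The paper uses the constant at the target level ($\rmf\equiv 0$ for level $0$ in Proposition~\ref{p:lower-hold}), so that the level set is all of $\dcube$, the algorithm's output is forced to be $\dcube$, and the bumps rise $2\e$ \emph{above} the level; every function involved has a nonempty level set. You take $\rmf_0\equiv\rma-2\e$, whose level set and inflated level set are \emph{empty}, so the forced output is $\varnothing$ and the bumps rise \emph{up to} the level. Your variant yields a slicker contradiction ($\varnothing$ versus ``must contain $\boldsymbol{z}^\star$''), but it uses one function with $\fa=\varnothing$. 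Since the theorem's hypotheses set up the problem for $\fa\neq\varnothing$, the matching lower bound really ought to hold over that class, and your construction does not directly give it. The fix is small and stays inside your framework: compare two bumps $\rmf_{\bz_1}$ and $\rmf_{\bz_2}$ whose supports both avoid all query points (possible once the budget is at most $\rmM-2$); both have nonempty level sets, the algorithm returns the same set on both, and that set must simultaneously contain $\bz_1$ and be contained in $\bcb{|\rmf_{\bz_2}-\rma|\le\e}\s \rmB(\bz_2,\rho)$, which is impossible since the two balls are disjoint. The rest --- a $\cC^\infty$ bump with compactly supported (hence \gradlip{}, hence \gradho{}) profile, the calibration $\rho\asymp\e^{1/(1+\gamma_1)}$, the packing of $\asymp\rho^{-\rmd}$ disjoint balls --- matches Lemma~\ref{l:bump} and its use in Proposition~\ref{p:lower-grad}, where the paper instantiates the bump explicitly and computes its Lipschitz constants.
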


The proof proceeds similarly to that of Theorem~\ref{t:bah-rate}.
It is deferred to Section~\ref{s:proofs-gradho} of the \suppl{}.

Similarly to Section~\ref{s:holder}, the rate in the previous result could also be achieved by choosing query points on a regular grid with step-size of order $\e^{1/(1+\gamma_1)}$.
However, our sequential algorithm features an improved sample complexity outside of a worst-case scenario, as shown in the following section.

\subsection{\texorpdfstring{Adaptivity to Smaller $\ds$}{Adaptivity to Smaller d*}}
\label{s:adapt-small-nls}

Our general result (Theorem~\ref{t:general-packing}) suggests that the sample complexity can be controlled whenever there exists $\ds \ge 0$ such that\\[-0.4cm]
\[
    \forall \rmr \in (0,1) , \quad
    \cN \Brb{ \bcb{ \labs{ \rmf - \rma } \le \rmr }, \; \rmr }
\le
    \Cs \lrb{ \frac{1}{\rmr} }^{\ds}  \hspace{-9pt} \;.
\]
for some $\Cs>0$. 
We call such a $\ds$ a \emph{\nls{} dimension} of $\fa$.
Note that such a $\ds$ always exists and $\ds \le \rmd$ by $\bcb{ \labs{ \rmf-\rma } \le \rmr } \s \dcube$. 
However $\ds \ge \rmd -1$ by Theorem~\ref{thm:inherenthardness} for non-degenerate level sets of continuous functions (for more details, see Section~\ref{s:nls-dim} in the \suppl{}).
The definition of \nls{} dimension leads to the following result.

\begin{corollary}
\label{c:for-biga-main}
Let $\rma \in \bbR, \rmc_1>0, \gamma_1 \in (0,1]$, and $\rmf \colon \dcube \to \bbR$ be any $(\rmc_1,\gamma_1)$-\gradho{} function with level set $\fa \neq \varnothing$.
Let $\ds \in [\rmd-1,\rmd]$ be a \nls{} dimension of $\fa$.
Fix any accuracy $\e > 0$.
Then, for all $\rmn > \rmm(\e)$, the output $\rmS_\rmn$ returned by \biga{} after the $\rmn$-th query is an $\e$-approximation of $\fa$, where
\[
    \rmm(\e)
:=
    \begin{cases}
        \displaystyle{     \kappa_1 + \kappa_2 \log_2 \lrb{ \frac{1}{\e^{1/(1+\gamma_1)}} }^+ }
    &
        \text{if } \ds = 0 \;,\\[3ex]
        \displaystyle{ \kappa(\ds) \frac{1}{\e^{\ds/(1+\gamma_1)}} }
    &
        \text{if } \ds > 0 \;,\\
    \end{cases}
\]
for $\kappa_1, \kappa_2, \kappa(\ds) \ge 0$ independent of $\e$, that depend exponentially on $\rmd$, where $\rmx^+ = \max\{\rmx,0\}$.
\end{corollary}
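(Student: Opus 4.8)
The plan is to apply Theorem~\ref{t:general-packing} with the \biga{} parameters $\rmk = 2^\rmd$, $\rmb = \rmc_1 \rmd$, $\beta = 1 + \gamma_1$ (valid by Lemma~\ref{l:interp-biga}, which guarantees that the \interp{}s $\rmh_{\rmC'}$ are $(\rmc_1\rmd, 1+\gamma_1)$-accurate approximations of $\rmf$), and then to bound the packing-number sum appearing in~\eqref{e:thm-n-eps} using the \nls{}-dimension hypothesis. Concretely, for each $\rmi \ge 0$ I would note that, since $2\rmb\,2^{-\beta\rmi} \le 2\rmb\,2^{-\rmi}$ (as $\beta = 1+\gamma_1 \ge 1$), the inflated level set $\bcb{\labs{\rmf-\rma} \le 2\rmb\,2^{-\beta\rmi}}$ is contained in $\bcb{\labs{\rmf-\rma} \le 2\rmb\,2^{-\rmi}}$. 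To bring the radius and the scale into the same form required by the definition of $\ds$, I would use monotonicity of the packing number together with a covering/rescaling argument: a set inflated at level $2\rmb\cdot t$ has $t$-packing number bounded by a constant (depending on $\rmb$ and $\rmd$, exponentially in $\rmd$) times the $t$-packing number of the set inflated at level $t$, which by the \nls{} hypothesis is at most $\Cs (1/t)^{\ds}$. Taking $t = 2^{-\rmi}$ (so $\delta\,2^{-\rmi} \le 2^{-\rmi}$ handles the limit in $\delta$ by monotonicity), this yields $\lim_{\delta\to1^-}\cN(\bcb{\labs{\rmf-\rma}\le 2\rmb\,2^{-\beta\rmi}}, \delta\,2^{-\rmi}) \le \Cs' \, 2^{\ds\rmi}$ for a constant $\Cs'$ depending only on $\rmb,\rmd,\Cs$.

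With that per-term bound in hand, the sum in~\eqref{e:thm-n-eps} becomes a geometric-type sum: $4^\rmd\,\rmk\sum_{\rmi=0}^{\rmi(\e)-1}\Cs'\,2^{\ds\rmi}$. I would split into two cases according to the value of $\ds$. If $\ds = 0$, every term equals $\Cs'$, so the sum is $4^\rmd\,\rmk\,\Cs'\,\rmi(\e)$, and since $\rmi(\e) = \bce{(1/\beta)\log_2(2\rmb/\e)} = \bce{(1/(1+\gamma_1))\log_2(2\rmc_1\rmd/\e)}$, this is of the form $\kappa_1 + \kappa_2\log_2(1/\e^{1/(1+\gamma_1)})^+$ after absorbing the $\log_2(2\rmc_1\rmd)$ term and the ceiling into $\kappa_1$ and using $(\cdot)^+$ to handle $\e \ge 2\rmb$ (where $\rmi(\e)$ could be taken as $0$ in the convention of the footnote). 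If $\ds > 0$, then $\sum_{\rmi=0}^{\rmi(\e)-1} 2^{\ds\rmi} = (2^{\ds\,\rmi(\e)}-1)/(2^\ds-1) \le 2^{\ds\,\rmi(\e)}/(2^\ds-1)$, and using $\rmi(\e) < (1/(1+\gamma_1))\log_2(2\rmc_1\rmd/\e) + 1$ gives $2^{\ds\,\rmi(\e)} \le 2^{\ds}\,(2\rmc_1\rmd)^{\ds/(1+\gamma_1)}\,\e^{-\ds/(1+\gamma_1)}$, so the whole expression is $\kappa(\ds)/\e^{\ds/(1+\gamma_1)}$ with $\kappa(\ds)$ collecting $4^\rmd$, $\rmk = 2^\rmd$, $\Cs'$, $1/(2^\ds-1)$, and the powers of $2\rmc_1\rmd$ — manifestly independent of $\e$ and exponential in $\rmd$. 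Setting $\rmm(\e)$ to this bound and invoking Theorem~\ref{t:general-packing} finishes the argument: for all $\rmn > \rmm(\e) \ge \rmn(\e)$, the output $\rmS_\rmn$ is an $\e$-approximation of $\fa$.

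The main obstacle I anticipate is the rescaling step that converts a packing number of the \emph{doubly-inflated} set $\bcb{\labs{\rmf-\rma}\le 2\rmb\,t}$ into one governed by the \nls{}-dimension definition, which is phrased only for sets of the form $\bcb{\labs{\rmf-\rma}\le\rmr}$ at the \emph{matching} radius $\rmr$. The clean way around this is the standard observation that a $t$-packing number is bounded by a $(t/\lceil 2\rmb\rceil)$-packing number — or, more carefully, that any $2\rmb\,t$-inflated set can be intersected with a grid of $O((2\rmb)^\rmd)$-many copies of $t$-cubes, on each of which one applies the definition with $\rmr = t$ after a dilation; alternatively one simply notes $\cN(E, t) \le \cN(E, t') \cdot (1 + \lceil t/t'\rceil)^\rmd$-type inequalities relating packing numbers at comparable radii. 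I would present whichever of these is cleanest and merely assert the resulting constant is exponential in $\rmd$, since the corollary only claims such dependence. Everything else is bookkeeping with geometric sums, ceilings, and the $(\cdot)^+$ truncation; I would be careful to treat the boundary case $\e \ge 2\rmb$ (where $\rmi(\e)$ and the sum both vanish) so that the stated $\rmm(\e)$ formulas remain valid.
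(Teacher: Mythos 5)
Your overall plan is sound and is essentially the route the paper takes (via the general Corollary~\ref{c:upp-bound-nls-dimension}, of which Corollary~\ref{c:for-biga-main} is the \biga{} instance, combined with Lemma~\ref{l:interp-biga}). The one place where you should be careful is the rescaling step, because the first way you describe it does not actually work: you cannot bound $\cN\brb{\bcb{\labs{\rmf-\rma}\le 2\rmb\,t},\,t}$ by a $\rmd$-dependent constant times $\cN\brb{\bcb{\labs{\rmf-\rma}\le t},\,t}$. Those are packing numbers of \emph{different} sets at the same scale, and since the inflated level set at radius $2\rmb\,t$ can be arbitrarily larger than the one at radius $t$ (e.g.\ the larger one could be all of $\dcube$ while the smaller is a single point), no universal multiplicative constant exists. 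The correct move—which the paper takes, and which your ``alternatively'' clause gestures at—is to keep the \emph{same} set $\bcb{\labs{\rmf-\rma}\le 2\rmb\,2^{-\rmi}}$ throughout, pass from the fine packing scale $\delta\,2^{-\rmi}$ up to the coarse scale $2\rmb\,2^{-\rmi}$ via Lemma~\ref{lm:pack-r1-r2} (paying a factor $\brb{1 + 8\rmb\,\bbI_{2\rmb\ge 1}}^\rmd$ in the $\delta\to 1^-$ limit), and only then apply the \nls{} hypothesis \emph{at the matching radius} $\rmr = 2\rmb\,2^{-\rmi}$. That is, use $\cN\brb{\bcb{\labs{\rmf-\rma}\le\rmr},\,\rmr}\le\Cs(1/\rmr)^{\ds}$ with $\rmr = 2\rmb\,2^{-\rmi}$, not with $\rmr = 2^{-\rmi}$. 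Your ``intersected with a grid of $t$-cubes after a dilation'' phrasing also hides the fact that the \nls{} condition is an intrinsic property of $\rmf$ at each scale, not something you can rescale through a change of variables.

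Two smaller bookkeeping points the paper handles that you should as well: the \nls{} inequality is only stated for $\rmr\in(0,1)$, so the indices $\rmi$ with $2\rmb\,2^{-\rmi}\ge 1$ contribute the trivial bound $1$ each, and there are at most $\cO(\log_2(4\rmb))$ of them (this gives the additive piece that must be folded into $\kappa_1$ or into $\kappa(\ds)$, which in the $\ds>0$ case requires a small additional case split on whether $\e$ is so large that this logarithmic remainder dominates, as the paper does). Once the \nls{} bound is applied at the correct radius, the geometric-sum manipulations and the $\ds=0$ versus $\ds>0$ split are exactly as you describe, and your handling of the $\e\ge 2\rmb$ degeneracy and the $(\cdot)^+$ truncation is fine.
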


We remark that $\ds = \rmd - 1$ can be achieved by well-behaved functions.
This is typically the case when $\rmf$ is convex or (as a corollary) if it consists of finitely many convex components.\footnote{More precisely, if for some $\rma' > \rma$, we have that the sublevel $\{ \rmf \le \rma' \}$ is a disjoint union of a finite number of convex sets on which $\rmf$ is convex.}
This non-trivial claim is proved in Section~\ref{s:convex-case} of the \suppl{} for convex functions with a proper level set\footnote{$\fa$ is $\Delta$-proper for some $\Delta>0$ if we have $\min_{\bx \in \dcube}\rmf(\bx) + \Delta \le \rma \le \min_{\bx \in \partial \dcube} \rmf(\bx)$.}.
The following result, combined with this fact and Corollary~\ref{c:for-biga-main}, shows that \biga{} is \nearo{} for determining proper level sets of convex \gradlip{} functions.

\begin{theorem}
\label{t:convex-summary}
Fix any level $\rma \in \bbR$ and an arbitrary accuracy $\e > 0$.
No deterministic algorithm $\rmA$ can guarantee to output an $\e$-approximation of any $\Delta$-proper level set $\fa$ of an arbitrary convex $\rmc_1$-\gradlip{} functions $\rmf$ with $\rmc_1\ge 3$ and $\Delta \in (0, \nicefrac{1}{4}]$, querying less than $\kappa / \e^{(\rmd-1)/2}$ of their values, where $\kappa > 0$ is a constant independent of $\e$.
\end{theorem}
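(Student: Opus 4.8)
The plan is an adversarial indistinguishability argument of the kind used for the lower bounds in Theorems~\ref{t:bah-rate} and~\ref{t:biga-rate}, but now exploiting the $(\rmd-1)$-dimensional nature of level sets (Theorem~\ref{thm:inherenthardness}) together with the rigidity convexity imposes. I would fix once and for all the admissible constants $\rmc_1 := 16 \ge 3$ and $\Delta := \nicefrac14 \in (0,\nicefrac14]$, and for every small enough $\e$ build a finite family $\rmf_0,\rmf_1,\dots,\rmf_N\colon\dcube\to\bbR$ of convex $\rmc_1$-\gradlip{} functions whose level sets $\fa$ are all $\Delta$-proper and nonempty, such that (i) each $\rmf_j$ with $j\ge1$ agrees with $\rmf_0$ outside an open ball $B_j\s\interior\dcube$, the $B_j$ being pairwise disjoint, and (ii) for every $j$ there is a point $\bp_j\in\{\rmf_0=\rma\}$ with $\babs{\rmf_j(\bp_j)-\rma}>\e$. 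Property (ii) makes $\rmf_0$ and $\rmf_j$ incompatible: no $\rmS\s\dcube$ can satisfy both $\{\rmf_0=\rma\}\s\rmS$ (which forces $\bp_j\in\rmS$) and $\rmS\s\{\babs{\rmf_j-\rma}\le\e\}$ (which forbids it). Granting such a family, fix any deterministic algorithm and run it on $\rmf_0$; after $N-1$ queries it has produced $\rmS_{N-1}$, determined by the values of $\rmf_0$ at its $\le N-1$ query points. Either $\rmS_{N-1}$ already fails to be an $\e$-approximation of $\{\rmf_0=\rma\}$, or --- since fewer than $N$ points cannot meet all $N$ disjoint balls --- there is a $j^\star$ with $B_{j^\star}$ containing no query point, so on input $\rmf_{j^\star}$ the algorithm queries the same points (the values agree off $B_{j^\star}$) and outputs the same $\rmS_{N-1}$, which by (i)--(ii) is not an $\e$-approximation of $\{\rmf_{j^\star}=\rma\}$. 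Thus on some function of the family the algorithm fails after $N-1$ queries, and it remains to make $N\ge\kappa/\e^{(\rmd-1)/2}$.

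For the construction I would take $\bc=(\nicefrac12,\dots,\nicefrac12)$, $\rho=\nicefrac14$, $\lambda=8$, and
\[
    \rmf_0(\bx) := \tfrac{\lambda}{2}\Brb{\lno{\bx-\bc}_2^2-\rho^2}+\rma ,
\]
so that $\rmf_0$ is convex with Hessian $\lambda I$ (hence $\lambda$-\gradlip{} also for $\lno{\cdot}_\iop$), $\{\rmf_0=\rma\}$ is the Euclidean sphere $\rmS$ of radius $\rho$ about $\bc$, and $\rmf_0$ is $\Delta$-proper with $\Delta=\tfrac{\lambda\rho^2}{2}=\nicefrac14$ (minimum $\rma-\Delta$ at $\bc$, boundary minimum $\rma+\tfrac{3\lambda}{32}>\rma$). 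For $\bp\in\rmS$ set $\eta:=5\sqrt{\rmd}\,\e$, $\tau:=2\sqrt{\rmd}\,\eta$, $r:=\sqrt{2\eta/\lambda}$, and $g_\bp(\bx):=\eta-\tfrac{\lambda}{2}\lno{\bx-\bp}_2^2$ (the tangent hyperplane of $\rmf_0$ at $\bp$ raised by $\eta$, minus $\rmf_0$; positive exactly on $B_\bp:=B_2(\bp,r)$). With $\sigma$ the $C^1$ convex Huber function $\sigma(t)=0$ for $t\le0$, $\sigma(t)=t^2/(2\tau)$ for $0\le t\le\tau$, $\sigma(t)=t-\tau/2$ for $t\ge\tau$, I would define $\rmf_\bp:=\rmf_0+\sigma\circ g_\bp$; since $\max g_\bp=\eta<\tau$ this is $\rmf_0$ off $B_\bp$ and $\rmf_0+g_\bp^2/(2\tau)$ on $B_\bp$, gluing in a $C^1$ way because $\nabla(g_\bp^2/(2\tau))$ vanishes where $g_\bp=0$. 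Its Hessian where defined is
\[
    \lambda\brb{1-\sigma'(g_\bp)}\,I+\sigma''(g_\bp)\,\nabla g_\bp\,(\nabla g_\bp)^{\!\top},
\]
which is positive semidefinite (as $\sigma'\in[0,1]$, $\sigma''\ge0$) and has $\lno{\cdot}_{\iop\to\iop}$-norm at most $2\lambda$ everywhere (on $B_\bp$ it is bounded by $\lambda+\tau^{-1}\lambda^2\lno{\bx-\bp}_\iop\lno{\bx-\bp}_1\le\lambda+\sqrt{\rmd}\,\lambda^2 r^2/\tau=2\lambda$). Hence $\rmf_\bp$ is convex, $\rmc_1$-\gradlip{} with $\rmc_1=2\lambda=16$, agrees with $\rmf_0$ off $B_\bp$, is again $\Delta$-proper (it dominates $\rmf_0$ and, for $\e$ small, agrees with $\rmf_0$ on $\partial\dcube$ and at $\bc$ since $B_\bp\s\interior\dcube$ and $\bc\notin B_\bp$), and satisfies $\rmf_\bp(\bp)=\rma+\sigma(\eta)=\rma+\tfrac54\e>\rma+\e$.

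To finish I would, for $\e$ small enough, pick a maximal $2r$-separated subset $\{\bp_1,\dots,\bp_N\}$ of $\rmS$; a standard volume/packing estimate on the sphere gives $N\ge\kappa_\rmd\,(\rho/r)^{\rmd-1}=\kappa/\e^{(\rmd-1)/2}$, and the balls $B_j:=B_{\bp_j}$ are pairwise disjoint and contained in $\interior\dcube$ (as $\rho+r<\nicefrac12$ for $\e$ small). Taking $\rmf_j:=\rmf_{\bp_j}$ yields a family with properties (i)--(ii), all in the prescribed class, so the first paragraph gives the bound. For $\e$ outside this regime $1/\e^{(\rmd-1)/2}$ is bounded, so after shrinking $\kappa$ the claim reduces to the elementary fact that no fixed set $\rmS_0$ is an $\e$-approximation of the level sets of all functions in the class (two far-apart spherical level sets with large enough curvature, hence thin $\e$-inflations, already rule this out), i.e. at least one query is needed.

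The delicate point is the perturbation step: a localized modification of a convex function that keeps it convex and keeps its Hessian below a prescribed bound cannot be supported on a ball of radius much smaller than $\sqrt{\e}$ while displacing the level set by more than $\e$ --- this trade-off is exactly what produces the exponent $(\rmd-1)/2$ rather than $\rmd-1$ --- and realizing it with a genuinely $C^1$ convex bump of height $\asymp\e$ forces the Huber-type smoothing above, whose extra curvature $\asymp\lambda\eta/\tau$ must be balanced against $\lambda$ by the choice $\tau\asymp\sqrt{\rmd}\,\eta$.
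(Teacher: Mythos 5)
Your proposal is correct and uses the same high-level strategy as the paper's proof (Theorem~\ref{t:lower-bound-convex} in the Supplementary Material): a quadratic base function whose level set is a Euclidean sphere centered at $(\nicefrac12,\ldots,\nicefrac12)$; a family of radial $C^1$ perturbations, each supported on a ball of Euclidean radius $\asymp\sqrt\e$ around a sphere point and raising the value there by $\asymp\e$; a packing of the sphere by $\asymp\e^{-(\rmd-1)/2}$ disjoint such balls; and a deterministic-indistinguishability argument giving the lower bound. The only genuine difference is the technical construction of the bump. The paper builds a smooth non-negative profile $\phi_3$ by double-integrating a triangle wave, adds $\rmf_1:=\beta\e\,\phi_3(\lno{\cdot-\bx_0}_2/(\kappa_3\sqrt\e))$ to the strongly convex base, and verifies convexity of the sum by showing $1$-strong convexity survives after tuning $\beta$. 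You instead compose a Huber-type $C^1$ convex $\sigma$ with the (concave) tangent-plane-minus-quadratic gap $g_\bp$, and establish convexity of $\rmf_0+\sigma\circ g_\bp$ directly by writing the almost-everywhere Hessian $\lambda(1-\sigma'(g_\bp))I + \sigma''(g_\bp)\,\nabla g_\bp(\nabla g_\bp)^{\top}$, which is manifestly PSD since $\sigma'\in[0,1]$, $\sigma''\ge0$; the bound $\lno{\cdot}_{\iop\to\iop}\le 2\lambda$ on this matrix (using $\lno{uu^{\top}}_{\iop\to\iop}=\lno{u}_\iop\lno{u}_1$ and the choice $\tau=2\sqrt\rmd\,\eta$) then gives the gradient-Lipschitz constant for free. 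Both constructions spend the curvature budget of $\rmf_0$ to absorb the negative-curvature part of the bump; your route makes that bookkeeping slightly more explicit, at the cost of a larger constant $\rmc_1=16$ where the paper squeezes out $\rmc_1=3$. All the quantitative checks — $\Delta=\nicefrac14$-properness of base and perturbed function, $\rmf_\bp(\bp)=\rma+\nicefrac54\e>\rma+\e$, the $C^1$ gluing at $\{g_\bp=0\}$, and $B_\bp\s(0,1)^\rmd$ for $\e$ small — are correct.
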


We give a complete proof of this result in Section~\ref{s:convex-lower-bound} of the \suppl{}.

\section{CONCLUSION}
\label{sec:conclusion}

We studied the problem of determining $\e$-ap\-prox\-i\-ma\-tions of the level set of a target function $\rmf$ by only querying its values.
After discussing the inherent hardness of the problem (Theorem~\ref{thm:inherenthardness}), we designed the class of \bi{} algorithms for which we proved theoretical guarantees under the assumption that accurate local approximations of $\rmf$ can be computed by only looking at its values (Theorem~\ref{t:general-packing}).

This provides a general method to reduce our level set approximation problem to a local approximation problem, decoupled from the original one. 

Such an approach leads to \nearo{} worst-case sample complexity guarantees for the case of \hold{} and \gradho{} functions (Theorems~\ref{t:bah-rate},~\ref{t:biga-rate}).
At the same time, we show that in some cases our \bi{} algorithms adapt to a natural structural property of~$\rmf$, namely small \nls{} dimension (Corollary~\ref{c:for-biga-main}) including convexity (Theorem~\ref{t:convex-summary} and preceding discussion).
% At the same time, we show that in some cases our \bi{} algorithms adapt to some other structural properties of $\rmf$, such as small \nls{} dimension (Corollary~\ref{c:for-biga-main}) or convexity (Theorem~\ref{t:convex-summary}).

% Such an approach leads to worst-case sample complexity guarantees for the case of $(\rmc,\gamma)$-\hold{} functions (Theorem~\ref{t:bah-rate}), which despite being \nearo{}, are still of an exponential order $1/\e^{\rmd/\gamma}$.

% To alleviate this issue, we investigated two different directions.
% Additional smoothness leads to a \nearo{} worst-case sample complexity of order $1/\e^{\rmd/(1+\gamma_1)}$ for the case of $(\rmc_1,\gamma_1)$-\gradho{} functions (Theorem~\ref{t:biga-rate}).
% Convexity reduces the exponential dependence from $\rmd$, to only $\rmd-1$, impacting rates less significantly than in the corresponding optimization problems.
% \tc{ Nevertheless, this allows to prove optimal...}
% On the other hand, smoothness seems to play a significant role.

\paragraph{Future Work.}

Compared to the best achievable rate $1/\e^{\rmd/\gamma}$ for $(\rmc,\gamma)$-\hold{} functions, we show that \bi{} algorithms converge at a faster $1/\e^{\rmd/(1+\gamma_1)}$ rate if $\rmf$ is $(\rmc_1,\gamma_1)$-\gradho{}.
This points at an interesting line of research: the study of general \hold{} spaces in which the target function is $\rmk$ times continuously differentiable and the $\rmk$-th partial derivatives are $(\rmc_\rmk,\gamma_\rmk)$-\hold{}, for some $\rmk \in \bbN$, $\rmc_\rmk>0$, and $\gamma_\rmk\in(0,1]$.
We conjecture that a suitable choice of \interp{}s for our \bi{} algorithms would lead to a \nearo{} sample complexity of order $1/\e^{\rmd/(\rmk + \gamma_k)}$ for this class of functions, making optimal solutions for this problem sample-efficient. 
Another possible line of research is the design of algorithms that adapt to the smoothness of $\rmf$ when the latter is unknown, similarly to global bandit optimization (e.g., \citealt{grill2015black,bartlett2019simple}).
We leave these interesting directions open for future work.

 \section*{Acknowledgments}

 The work of Tommaso Cesari and S{\'e}bastien Gerchinovitz has benefited from the AI Interdisciplinary Institute ANITI, which is funded by the French ``Investing for the Future – PIA3'' program under the Grant agreement ANR-19-P3IA-0004. S\'{e}bastien Gerchinovitz gratefully acknowledges the support of the DEEL project (\url{https://www.deel.ai/}). This work benefited from the support of the project BOLD from the French national research agency (ANR).

% biblio
\bibliography{biblio}
\bibliographystyle{plainnat}

\clearpage

% TC: aistats supplementary material must be single-column 
\onecolumn

% TC: title style copied from \aistatstitle
\hsize\textwidth
  \linewidth\hsize \toptitlebar {\centering
  {\Large\bfseries The Sample Complexity of Level Set Approximation: \\
Supplementary Materials} \\[0.5cm] {\bfseries Fran\c{c}ois Bachoc \hspace{1cm} Tommaso R. Cesari \hspace{1cm}  S\'ebastien Gerchinovitz \par}}
 \bottomtitlebar \vskip 0.2in

% TC: This suppresses page numbers, as per aistats instructions
\pagestyle{empty}

\appendix

% \section{Sample complexity: formal definitions}
\section{SAMPLE COMPLEXITY: FORMAL DEFINITIONS}
\label{sec:def}

We provide formal definitions for the notions of deterministic algorithm, sample complexity, and rate-optimal algorithm.

We first precisely define deterministic algorithms that query values of functions sequentially and rely only on this information to build approximations of their level sets (sketched in Online Protocol~\ref{alg:protocol-det-algo}).
The behavior of any such algorithm is completely determined by a pair $( \fhi, \psi )$, where 
$\fhi = (\fhi_\rmn)_{\rmn\in \bbN}$ is a sequence of functions $\fhi_\rmn\colon \bbR^{\rmn-1} \to \dcube$ mapping the $\rmn-1$ previously observed values $f(\bx_1),\ldots,f(\bx_{n-1})$ to the next query point $\bx_\rmn$, and
$\psi = (\psi_\rmn)_{\rmn\in \bbN}$ is a sequence of functions $\psi_\rmn\colon \bbR^\rmn \to \bcb{ \text{subsets of } \dcube }$ mapping the $\rmn$ currently known values $f(\bx_1),\ldots,f(\bx_n)$ to an approximation $\rmS_\rmn$ of the target level set.

We can now define the notion of sample complexity, which corresponds to the smallest number of queries after which the outputs $S_n$ of an algorithm are all $\e$-approximations of the level set $\fa$ (recall Definition~\ref{def:eps-approx} in the Introduction).
\begin{definition}[Sample complexity]%[Smallest number of queries to get an $\e$-approximation]
\label{n:smallest-n-queries}
For all functions $\rmf\colon \dcube \to \bbR$, all levels $\rma \in \bbR$, any deterministic algorithm $\rmA$, and any accuracy $\e > 0$, we denote by $\fn (\rmf,\rmA,\e,\rma)$ the smallest number of queries to $\rmf$ that $\rmA$ needs in order for its output sets $\Sn$ to be $\e$-approximations of the level set $\fa$ for all $\rmn\ge \fn (\rmf,\rmA,\e,\rma)$, i.e.,
% \begin{multline}
%     \label{e:min-n-queries}
%     \fn (\rmf,\rmA,\e,\rma)
% :=
%     \inf\bigl\{ \rmn' \in \bbN : \forall \rmn \ge \rmn',
% \\
%     \rmS_\rmn \text{ is an } \e \text{-approximation of } \{\rmf=\rma\} \bigr\}\;.
% \end{multline}
\begin{equation}
    \label{e:min-n-queries}
    \fn (\rmf,\rmA,\e,\rma)
:=
    \inf\bigl\{ \rmn' \in \bbN : \forall \rmn \ge \rmn',
    \rmS_\rmn \text{ is an } \e \text{-approximation of } \{\rmf=\rma\} \bigr\}\;.
\end{equation}
We refer to $\fn (\rmf,\rmA,\e,\rma)$ as the \emph{sample complexity} of~$\rmA$ (for the $\e$-approximation of $\fa$).
\end{definition}
We can now define \nearo{} algorithms rigorously.
At a high-level, they output the tightest (up to constants) approximations of level sets that can possibly be achieved by deterministic algorithms.
\begin{definition}[\Nearo{} algorithm]
\label{d:near-opt}
For any level $\rma\in \bbR$ and some given family $\cF$ of real-valued functions defined on $\dcube$, we say that a deterministic algorithm $\rmA$ is \emph{\nearo} (for level $\rma$ and family $\cF$) if, in the worst-case, it needs the same number of queries (up to constants) of the best deterministic algorithm in order to output approximations of level sets within any given accuracy,
i.e., if there exists a constant $\kappa = \kappa(\rma, \cF) \ge 1$, depending only on $\rma$ and $\cF$, such that, for all $\e > 0$,
\begin{equation}
    \label{e:near-opt}
    \sup_{\rmf\in \cF} \fn (\rmf,\rmA,\e,\rma)
\le
    \kappa \inf_{\rmA' \in \cA} \sup_{\rmf \in \cF} \fn (\rmf,\rmA',\e,\rma) \;,
\end{equation}
where $\cA$ denotes the set of all deterministic algorithms.
\end{definition}

% \section{Useful inequalities about packing and covering numbers}
\section{USEFUL INEQUALITIES ABOUT PACKING AND COVERING NUMBERS}
\label{sec:lemmaPacking}

For all $\rmr>0$, the $\rmr$-\emph{covering number} $\cM(\rmE,\rmr)$ of a bounded subset $\rmE$ of $\bbR^\rmd$ (with respect to the $\sup$-norm $\lno{\cdot}_\iop$) is the smallest cardinality of an $\rmr$-covering of $\rmE$, i.e.,
% \begin{multline*}
%     	\cM(\rmE,\rmr)
% :=
% 	\min \bigl(
% 		\rmk\in\bbN  :  \exists \bx_1,\ld,\bx_\rmk \in \bbR^\rmd, \,
% \\
%         \forall \bx \in \rmE, \, \exists \rmi\in\{1,\ld, \rmk\}, \, \lno{ \bx-\bx_\rmi }_\iop \le \rmr
% 	\bigr)
% \end{multline*}
\[
    	\cM(\rmE,\rmr)
:=
	\min \bigl(
		\rmk\in\bbN  :  \exists \bx_1,\ld,\bx_\rmk \in \bbR^\rmd, \,
        \forall \bx \in \rmE, \, \exists \rmi\in\{1,\ld, \rmk\}, \, \lno{ \bx-\bx_\rmi }_\iop \le \rmr
	\bigr)
\]
if $\rmE$ is nonempty, zero otherwise.

Covering numbers and packing numbers \eqref{e:packing} are closely related. In particular, the following well-known inequalities hold (see, e.g., \citealt[Lemmas~5.5~and~5.7, with permuted notation of $\cM$ and~$\cN$]{wainwright2019high}).\footnote{The definition of $r$-covering number of a subset $A$ of $\bbR^d$ implied by \citep[Definition~5.1]{wainwright2019high} is slightly stronger than the one used in our paper, because elements $\rmx_1, \ld, \rmx_\rmk$ of $r$-covers belong to $A$ rather than just $\bbR^\rmd$. Even if we do not need it for our analysis, Inequality~(\ref{eq:upper-convering}) holds also in this stronger sense.}
\begin{lemma}
\label{lem:packcover}
For any subset $\rmE$ of $\dcube$ and any real number $\rmr>0$,
\begin{equation}
\label{eq:wainwright}
	\cN(\rmE,2\rmr)
\le
	\cM(\rmE,\rmr)
\le
	\cN(\rmE,\rmr)\;.
\end{equation}
Furthermore, for all $\delta>0$ and all $\rmr>0$, if $\rmB(\delta) = \bcb{ \bx \in \bbR : \lno{ \bx }_\iop \le \delta }$,
\begin{equation}
\label{eq:upper-convering}
	\cM\brb{\rmB(\delta),\rmr}
\le
	\lrb{1 + 2 \frac{\delta}{\rmr} \bbI_{\rmr<\delta}}^\rmd\;.
\end{equation}
\end{lemma}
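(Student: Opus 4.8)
The plan is to establish the two displayed facts separately; both are classical and follow from elementary extremal-set arguments, so I expect the write-up to be short.

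For the chain \eqref{eq:wainwright}, I would first prove the left inequality $\cN(\rmE,2\rmr) \le \cM(\rmE,\rmr)$. Fix a $2\rmr$-separated subset $\{\bx_1,\ld,\bx_\rmN\} \s \rmE$ of maximum size (which exists since $\rmE \s \dcube$ is bounded), so that $\rmN = \cN(\rmE,2\rmr)$, and fix a minimum-size $\rmr$-cover $\{\by_1,\ld,\by_\rmM\}$ of $\rmE$, so that $\rmM = \cM(\rmE,\rmr)$. For each $\bx_\rmi$ pick an index $k(\rmi)$ with $\lno{\bx_\rmi - \by_{k(\rmi)}}_\iop \le \rmr$; if $k(\rmi) = k(\rmj)$ with $\rmi \neq \rmj$, the triangle inequality forces $\lno{\bx_\rmi - \bx_\rmj}_\iop \le 2\rmr$, contradicting $2\rmr$-separation, so $\rmi \mapsto k(\rmi)$ is injective and $\rmN \le \rmM$. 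For the right inequality $\cM(\rmE,\rmr) \le \cN(\rmE,\rmr)$, I would take an $\rmr$-separated subset $\{\bx_1,\ld,\bx_\rmN\} \s \rmE$ of maximum size ($\rmN = \cN(\rmE,\rmr)$) and argue that by maximality every $\bx \in \rmE$ lies within $\sup$-distance $\rmr$ of some $\bx_\rmi$ (else the set could be enlarged), so $\{\bx_1,\ld,\bx_\rmN\}$ is itself an $\rmr$-cover and $\cM(\rmE,\rmr) \le \rmN$. The empty-$\rmE$ case is trivial, and the only points needing a sentence of care are the attainment of the packing supremum (finiteness from boundedness) and the bookkeeping with strict versus non-strict inequalities in the definition of $\cN$.

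For \eqref{eq:upper-convering}, I would read $\rmB(\delta)$ as the $\sup$-norm ball $[-\delta,\delta]^\rmd$ and cover it coordinate by coordinate. If $\rmr \ge \delta$, the single center $\bzero$ is an $\rmr$-cover of $\rmB(\delta)$, giving $\cM(\rmB(\delta),\rmr) \le 1$, which is exactly the claimed bound since then the indicator vanishes. If $\rmr < \delta$, I would cover the one-dimensional interval $[-\delta,\delta]$ by $\bce{\delta/\rmr}$ balls of radius $\rmr$ with equally spaced centers inside $[-\delta,\delta]$, and then take the Cartesian product of $\rmd$ such one-dimensional covers, which is an $\rmr$-cover of $\rmB(\delta)$ in the $\sup$-norm (with centres inside $\rmB(\delta)$, which also yields the stronger form mentioned in the footnote). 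This gives $\cM(\rmB(\delta),\rmr) \le \bce{\delta/\rmr}^\rmd \le (1 + \delta/\rmr)^\rmd \le (1 + 2\delta/\rmr)^\rmd$, as claimed.

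I do not expect a genuine obstacle anywhere: both parts reduce to the triangle inequality and an elementary interval-covering count. If anything is delicate, it is purely the low-level bookkeeping — attainment of the packing supremum and the strict/non-strict conventions in the definitions of $\cN$ and $\cM$ — none of which causes real trouble.
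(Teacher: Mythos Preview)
Your proposal is correct and follows the standard textbook arguments; the paper itself does not give a proof of this lemma but simply cites \citet[Lemmas~5.5 and~5.7]{wainwright2019high}, whose proofs are precisely the pigeonhole and maximal-packing-is-a-cover arguments you outline for \eqref{eq:wainwright}, together with a product-grid cover for \eqref{eq:upper-convering}. Your bound $\lceil \delta/\rmr \rceil^\rmd \le (1+\delta/\rmr)^\rmd$ is in fact slightly sharper than the stated $(1+2\delta/\rmr)^\rmd$, so nothing is lost.
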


We now state a known lemma about packing numbers at different scales.

\begin{lemma}
\label{lm:pack-r1-r2}
For any subset $\rmE$ of $\dcube$ and any real numbers $\rmr_1,\rmr_2>0$,
\[
	\cN(\rmE,\rmr_1)
\le
	\lrb{1+ 4\frac{\rmr_2}{\rmr_1}\bbI_{\rmr_2>\rmr_1}}^\rmd \times \cN(\rmE,\rmr_2) \;.
\]
\end{lemma}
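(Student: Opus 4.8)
The plan is to compare packing numbers at scale $\rmr_1$ and $\rmr_2$ by interpolating through a covering at the larger scale. The only nontrivial case is $\rmr_2 > \rmr_1$, since otherwise monotonicity of $\rmr \mapsto \cN(\rmE, \rmr)$ gives $\cN(\rmE,\rmr_1) \ge \cN(\rmE,\rmr_2)$ and the claimed factor $(1+0)^\rmd = 1$ would point the wrong way---wait, so actually in the case $\rmr_2 \le \rmr_1$ the indicator kills the correction term and the inequality reads $\cN(\rmE,\rmr_1) \le \cN(\rmE,\rmr_2)$, which is exactly monotonicity (smaller separation radius allows at least as many separated points). So that case is immediate. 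Assume henceforth $\rmr_2 > \rmr_1$.

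First I would take a maximal $\rmr_1$-separated set $\bx_1, \ld, \bx_\rmN \in \rmE$ realizing $\rmN = \cN(\rmE,\rmr_1)$ (if $\rmE = \varnothing$ everything is zero). Next I would fix an $\rmr_2$-covering $\by_1, \ld, \by_\rmM$ of $\rmE$ of minimal size $\rmM = \cM(\rmE, \rmr_2)$; by the left inequality in \eqref{eq:wainwright} we have $\rmM = \cM(\rmE,\rmr_2) \le \cN(\rmE,\rmr_2)$. Each $\bx_\rmi$ lies within $\sup$-norm $\rmr_2$ of some $\by_{\rmj(\rmi)}$, i.e., inside the ball $\rmB_\iop(\by_{\rmj(\rmi)}, \rmr_2)$. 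The heart of the argument is a pigeonhole bound: how many of the $\rmr_1$-separated points $\bx_\rmi$ can be assigned to the same center $\by_\rmj$? All such points lie in $\rmB_\iop(\by_\rmj, \rmr_2)$, a $\sup$-norm ball of radius $\rmr_2$, and they are pairwise $\rmr_1$-separated, so their number is at most $\cN\brb{\rmB_\iop(\by_\rmj,\rmr_2), \rmr_1}$. By translation invariance this equals $\cN\brb{\rmB(\rmr_2), \rmr_1}$, which by \eqref{eq:wainwright} is $\le \cM\brb{\rmB(\rmr_2), \rmr_1/2}$, and then \eqref{eq:upper-convering} with $\delta = \rmr_2$, $\rmr = \rmr_1/2 < \rmr_2$ gives $\cM\brb{\rmB(\rmr_2),\rmr_1/2} \le (1 + 4\rmr_2/\rmr_1)^\rmd$.

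Putting it together: $\cN(\rmE,\rmr_1) = \rmN \le \sum_{\rmj=1}^\rmM \#\{\rmi : \rmj(\rmi) = \rmj\} \le \rmM \cdot (1 + 4\rmr_2/\rmr_1)^\rmd \le (1 + 4\rmr_2/\rmr_1)^\rmd \, \cN(\rmE,\rmr_2)$, which is exactly the claim in the case $\rmr_2 > \rmr_1$. Combining with the trivial monotonicity case $\rmr_2 \le \rmr_1$ finishes the proof.

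I do not expect a genuine obstacle here; the only thing to be careful about is the bookkeeping with the factor of $2$ (using the packing-to-covering step $\cN(\cdot, \rmr_1) \le \cM(\cdot, \rmr_1/2)$ so that the radius fed into \eqref{eq:upper-convering} is $\rmr_1/2$, which produces the constant $4$ rather than $2$), and checking that the strict-versus-nonstrict conventions in Definition~\ref{d:packing-number} (separation is $> \rmr$) and in the covering lemma are compatible---a maximal $>\rmr_1$-separated set and a radius-$\le\rmr_1/2$ cover interact correctly because a ball of radius $\rmr_1/2$ cannot contain two points at $\sup$-distance $> \rmr_1$. One should also handle the degenerate situation $\rmr_1 \ge 2\rmr_2$ (empty balls could not arise, but the indicator $\bbI_{\rmr_1/2 < \rmr_2}$ in \eqref{eq:upper-convering} might vanish); since we are in the regime $\rmr_2 > \rmr_1 > \rmr_1/2$ the indicator is always $1$, so this is a non-issue.
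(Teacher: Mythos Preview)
Your proof is correct and follows essentially the same route as the paper's. The paper phrases the core step as a two-stage covering inequality, $\cM(\rmE,\rmr_1/2)\le \cM(\rmE,\rmr_2)\cdot\cM\brb{\rmB(\rmr_2),\rmr_1/2}$, whereas you pigeonhole the $\rmr_1$-packing points into the balls of an $\rmr_2$-covering and bound the count per ball; both arguments rest on the same two ingredients \eqref{eq:wainwright} and \eqref{eq:upper-convering} and yield the identical constant.
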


\begin{proof}
We can assume without loss of generality that $\rmE$ is nonempty and that $\rmr_1<\rmr_2$. Then,
\begin{align*}
	\cN(\rmE,\rmr_1)
& \le
	\cM(\rmE,\rmr_1/2)
	\tag{by (\ref{eq:wainwright})}\\
& \le
	\cM(\rmE,\rmr_2) \times \cM \brb{\rmB(\rmr_2), \rmr_1/2}
	\tag{see below}\\
& \le 
	\cN(\rmE,\rmr_2) \times \cM \brb{\rmB(\rmr_2), \rmr_1/2}
	\tag{by (\ref{eq:wainwright})}\\
& \le
	\cN(\rmE,\rmr_2) \times \lrb{1+\frac{4\rmr_2}{\rmr_1}}^\rmd \; .
	\tag{by (\ref{eq:upper-convering})}
\end{align*}
The second inequality is obtained by building the $\rmr_1/2$-covering of $\rmE$ in two steps. First, we cover $\rmE$ with balls of radius $\rmr_2$. Second, we cover each ball of the first cover with balls of radius $\rmr_1/2$. 
\end{proof}

The next lemma upper bounds the packing number of the unit \hyperc{} in the sup-norm, at all scales $\rmr$.

\begin{lemma}
\label{l:pack-unit-cube}
For any positive real number $\rmr > 0$, the $\rmr$-packing number of the unit cube in the sup-norm satisfies
\[
    \cN \brb{ \dcube, \, \rmr } 
\le 
    \lrb{ \lfl{ \frac{1}{\rmr} } + 1 }^\rmd \;.
\]
\end{lemma}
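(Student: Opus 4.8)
The plan is to bound the maximum cardinality of an $\rmr$-separated subset of $\dcube$ by a pigeonhole argument against a partition of the cube into sub-cubes of side strictly smaller than $\rmr$. First I would set $\rmm := \bfl{1/\rmr} + 1$; since $\lfloor \rmx \rfloor + 1 > \rmx$ for every real $\rmx$, this gives $\rmm > 1/\rmr$, hence $1/\rmm < \rmr$. This strict inequality is exactly what forces the floor into the statement and is the only numerical fact the argument needs.

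Next I would partition $\dcube$ into $\rmm^\rmd$ sub-cubes. For $\rmk \in \{0,\ldots,\rmm-1\}$ set $I_\rmk := [\rmk/\rmm, (\rmk+1)/\rmm)$ when $\rmk \le \rmm-2$ and $I_{\rmm-1} := [(\rmm-1)/\rmm, 1]$, and for each tuple $(\rmk_1,\ldots,\rmk_\rmd) \in \{0,\ldots,\rmm-1\}^\rmd$ put $Q_{\rmk_1,\ldots,\rmk_\rmd} := I_{\rmk_1} \times \cdots \times I_{\rmk_\rmd}$. These $\rmm^\rmd$ cells are pairwise disjoint with union $\dcube$, and each has $\sup$-norm diameter at most $1/\rmm < \rmr$; closing off the last interval merely turns the cover into an honest partition so that the pigeonhole step assigns each point a unique cell. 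Then, if $\bx_1,\ldots,\bx_\rmN \in \dcube$ are $\rmr$-separated, i.e.\ $\lno{\bx_\rmi - \bx_\rmj}_\iop > \rmr$ whenever $\rmi \neq \rmj$, and $\rmN > \rmm^\rmd$, two of them must share a cell $Q_{\rmk_1,\ldots,\rmk_\rmd}$ and therefore lie at $\sup$-distance at most $1/\rmm < \rmr$, a contradiction. Hence every $\rmr$-separated subset of $\dcube$ has at most $\rmm^\rmd$ elements, and taking the supremum over all such subsets yields $\cN(\dcube,\rmr) \le \rmm^\rmd = \brb{\bfl{1/\rmr}+1}^\rmd$.

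I do not expect any genuine obstacle; the two points that need a moment's care are (i) matching the strict separation ``$>\rmr$'' in the definition of the packing number with the strict diameter bound ``$\le 1/\rmm < \rmr$'' on the cells, and (ii) making the sub-cubes a true partition rather than just a cover. An alternative but less transparent route is to cover $\dcube$ by the $\rmm^\rmd$ closed sub-cubes of side $1/\rmm$ — each a $\sup$-ball of radius $1/(2\rmm)$ — to get $\cM(\dcube, 1/(2\rmm)) \le \rmm^\rmd$, and then chain $\cN(\dcube,\rmr) \le \cN(\dcube, 1/\rmm) \le \cM(\dcube, 1/(2\rmm))$ via Lemma~\ref{lem:packcover} and the monotonicity of $\rmr \mapsto \cN(\dcube,\rmr)$; I would nonetheless present the pigeonhole version.
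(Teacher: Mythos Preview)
Your proof is correct. The pigeonhole argument on the partition into $\rmm^\rmd$ sub-cubes of $\sup$-diameter $1/\rmm<\rmr$ is airtight, and you correctly matched the strict inequality in the packing definition.

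The paper takes a slightly different route, essentially the alternative you sketch at the end: it handles $\rmr\ge 1$ separately, then for $\rmr<1$ constructs an explicit $(\rmr/2)$-covering $\rmG$ of $\dcube$ (a shifted grid of $\rmr$-equispaced points) with $\labs{\rmG}=\brb{\lfl{1/\rmr}+1}^\rmd$, and concludes via Lemma~\ref{lem:packcover} that $\cN(\dcube,\rmr)\le\cM(\dcube,\rmr/2)\le\labs{\rmG}$. Your primary argument is more self-contained --- it avoids the detour through covering numbers and Lemma~\ref{lem:packcover} --- while the paper's version makes the connection to the covering machinery already set up in Section~\ref{sec:lemmaPacking} explicit. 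Both yield the same constant and there is nothing to choose between them mathematically.
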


\begin{proof}
Since the diameter (in the $\sup$-norm $\lno{\cdot}_\iop$) of the unit \hyperc{} is $1$, if $\rmr \ge 1$, then the packing number is
$
    \cN \brb{ \dcube, \, \rmr } 
=
    1
\le
    \lrb{ \lfl{ \nicefrac{1}{\rmr} } + 1 }^\rmd
$.
Consider now the case $\rmr < 1$. 
Let $\rho := 1 - \lfl{\nicefrac{1}{\rmr}} \rmr \in [0, \rmr)$ and $\rmG$ be the set of $\rmr$-equispaced points $\bcb{ \nicefrac{\rho}{2}, \ \nicefrac{\rho}{2} + \rmr, \ \nicefrac{\rho}{2} + 2\rmr, \ \ld, \ \nicefrac{\rho}{2} + \lfl{ \nicefrac{1}{\rmr} }\rmr }^\rmd$.
Note that each point in $\dcube$ is at most $(\nicefrac{\rmr}{2})$-away from a point in $\rmG$ (in the sup-norm), i.e., $\rmG$ is an $(\nicefrac{\rmr}{2})$-covering of $\dcube$. We can thus use~\eqref{eq:wainwright} in Lemma~\ref{lem:packcover} at scale $\nicefrac{\rmr}{2}$ so see that $\cN \brb{ \dcube, \, \rmr } \le \cM \brb{ \dcube, \, \nicefrac{\rmr}{2} } \le \labs{\rmG}  = \brb{ \lfl{ \nicefrac{1}{\rmr} } + 1 }^\rmd$.
\end{proof}

The next lemma upper bounds the $\rmr$-packing number (in the sup-norm) of an inflated level set at scale $\rmr$. %shows that the dimension $\rmd$ of the domain is \emph{always} an upper bound on the \nls{} dimension (Definition~\ref{d:nls-dim}) of a function $\rmf \colon \dcube \to \bbR$.

\begin{lemma}
\label{l:trivial-nls}
For any function $\rmf \colon \dcube \to \bbR$ and all scales $\rmr \in (0,1)$,
\[
    \cN \Brb{ \bcb{ \labs{ \rmf - \rma } \le \rmr }, \; \rmr }
\le  
    2^\rmd \, \lrb{\frac{1}{\rmr}}^\rmd \;.
\]
\end{lemma}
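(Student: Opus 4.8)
The statement bounds the $\rmr$-packing number of the inflated level set $\bcb{\labs{\rmf-\rma}\le\rmr}$ at scale $\rmr$ by $2^\rmd(1/\rmr)^\rmd$. Since the inflated level set is a subset of $\dcube$, the plan is simply to invoke monotonicity of packing numbers in their first argument together with Lemma~\ref{l:pack-unit-cube}. Concretely, from $\bcb{\labs{\rmf-\rma}\le\rmr}\s\dcube$ and the fact that $\rmE\mapsto\cN(\rmE,\rmr)$ is monotone (any set of $\rmr$-separated points in the smaller set is also one in the larger set, so the sup defining $\cN$ can only increase), we get
\[
    \cN\Brb{\bcb{\labs{\rmf-\rma}\le\rmr},\,\rmr}
\le
    \cN\brb{\dcube,\,\rmr}
\le
    \lrb{\lfl{\tfrac{1}{\rmr}}+1}^\rmd .
\]

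Then I would finish with the elementary estimate $\lfl{1/\rmr}+1\le 1/\rmr+1\le 2/\rmr$, valid because $\rmr\in(0,1)$ forces $1/\rmr>1$, hence $1/\rmr+1<2/\rmr$. Raising to the $\rmd$-th power yields $\brb{\lfl{1/\rmr}+1}^\rmd\le 2^\rmd(1/\rmr)^\rmd$, which is exactly the claimed bound.

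There is essentially no obstacle here: the only two ingredients are the set-monotonicity of packing numbers (immediate from Definition~\ref{d:packing-number}, since enlarging $\rmE$ enlarges the feasible set of configurations in the supremum) and the already-established Lemma~\ref{l:pack-unit-cube}. The one point to state carefully is the arithmetic inequality $\lfl{1/\rmr}+1\le 2/\rmr$ for $\rmr\in(0,1)$, which is where the restriction $\rmr<1$ is used. I would also note for completeness that the bound is trivially true (both sides are at least $1$, with the right-hand side $>2^\rmd$) so no edge-case analysis of emptiness of the inflated level set is needed, unlike in Lemma~\ref{l:pack-unit-cube}.
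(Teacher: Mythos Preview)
Your proposal is correct and follows essentially the same argument as the paper: monotonicity of packing numbers under set inclusion, then Lemma~\ref{l:pack-unit-cube}, then the elementary bound $\lfl{1/\rmr}+1\le 1/\rmr+1\le 2/\rmr$ for $\rmr\in(0,1)$. The paper's proof is identical in structure and even in the arithmetic step.
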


\begin{proof}
Let $\rmf \colon \dcube \to \bbR$ be an arbitrary function and $\rmr \in (0,1)$ any scale.
By the monotonicity of the packing number ($\rmE \s \rmF$ implies $\cN(\rmE,\rmr) \le \cN(\rmE,\rmr)$ by definition of packing number ---Definition~\ref{d:packing-number}) and the previous lemma (Lemma~\ref{l:pack-unit-cube}), we get
% \begin{align*}
%     \cN \Brb{ \bcb{ \labs{ \rmf - \rma } \le \rmr }, \; \rmr }
% &
% \le
%     \cN \brb{ \dcube, \, \rmr } 
% \le
%     \lrb{ \frac{1}{r} + 1 }^\rmd
% \\
% &
% \le
%     \lrb{ \frac{1}{r} + \frac{1}{r} }^\rmd
% \le
%     2^\rmd \, \lrb{ \frac{1}{r} }^\rmd \;.
% \end{align*}
\[
    \cN \Brb{ \bcb{ \labs{ \rmf - \rma } \le \rmr }, \; \rmr }
\le
    \cN \brb{ \dcube, \, \rmr } 
\le
    \lrb{ \frac{1}{r} + 1 }^\rmd
\le
    \lrb{ \frac{1}{r} + \frac{1}{r} }^\rmd
\le
    2^\rmd \, \lrb{ \frac{1}{r} }^\rmd \;.
\]
\end{proof}

% \section{Missing Proofs of Section~\ref{s:hold-bi}}
\section{MISSING PROOFS OF SECTION~\ref{s:hold-bi}}
\label{s:proofs-ba}

We now provide the missing proof of a claim we made in the proof of Theorem~\ref{t:general-packing}.

\begin{claim}
\label{cl:main-thm}
Under the assumptions of Theorem~\ref{t:general-packing}, let $\delta\in(0,1)$ and $\rmi \in \bbN$.
Then, the family of \hyperc{}s $\cC_\rmi$ maintained by Algorithm~\ref{alg:bi} can be partitioned into $2^\rmd$ subfamilies $\cC_{\rmi} (1), \ld, \cC_{\rmi} (2^\rmd)$ with the property that for all $\rmk \in \{1,\ld,2^{\rmd}\}$ and all $\rmC,\rmC' \in \cC_\rmi(\rmk)$, $\rmC\neq\rmC'$, we have $\inf_{\bx \in \rmC, \by \in \rmC'} \lno{ \bx - \by }_\iop > \delta \, 2^{-\rmi}$
\end{claim}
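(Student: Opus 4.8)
The plan is to exhibit the required partition explicitly as a $\rmd$-dimensional \emph{checkerboard coloring} of the dyadic grid. First I would record the (easy but necessary) structural fact that every \hyperc{} handled by Algorithm~\ref{alg:bi} at iteration $\rmi$ is a cell of the uniform dyadic grid of mesh $2^{-\rmi}$: by induction on $\rmi$, using $\cC_0 = \lcb{\dcube}$ and Definition~\ref{d:bisection}, if every element of $\cC_{\rmi-1}$ has the form $\prod_{\rmj=1}^\rmd \bsb{ \rmm_\rmj 2^{-(\rmi-1)},\, (\rmm_\rmj+1) 2^{-(\rmi-1)} }$ with integers $0 \le \rmm_\rmj < 2^{\rmi-1}$, then every element of $\cC'_\rmi = \bisect{\cC_{\rmi-1}}$ has the form $\prod_{\rmj=1}^\rmd \bsb{ \rmm_\rmj 2^{-\rmi},\, (\rmm_\rmj+1) 2^{-\rmi} }$ with integers $0 \le \rmm_\rmj < 2^{\rmi}$, and the same is then true of $\cC_\rmi \s \cC'_\rmi$. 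Thus each $\rmC \in \cC_\rmi$ is determined by its integer coordinate vector $(\rmm_1,\ld,\rmm_\rmd) \in \lcb{0,\ld,2^\rmi-1}^\rmd$, with $\rmC = \prod_{\rmj=1}^\rmd \bsb{ \rmm_\rmj 2^{-\rmi},\, (\rmm_\rmj+1) 2^{-\rmi} }$.

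Next I would define the coloring $\chi(\rmC) := \brb{ \rmm_1 \bmod 2,\ \ld,\ \rmm_\rmd \bmod 2 } \in \lcb{0,1}^\rmd$ and let $\cC_\rmi(1), \ld, \cC_\rmi(2^\rmd)$ be the preimages $\chi^{-1}(\bu)$ as $\bu$ ranges over a fixed enumeration of $\lcb{0,1}^\rmd$ (some of these families may be empty, which is harmless). By construction this is a partition of $\cC_\rmi$ into $2^\rmd$ subfamilies, so it only remains to check the separation property inside each class.

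The key step is the separation estimate. Fix $\rmk \in \lcb{1,\ld,2^\rmd}$ and two distinct \hyperc{}s $\rmC, \rmC' \in \cC_\rmi(\rmk)$, with coordinate vectors $(\rmm_\rmj)_\rmj$ and $(\rmm'_\rmj)_\rmj$. Since $\rmC \neq \rmC'$, there is an index $\rmj_0$ with $\rmm_{\rmj_0} \neq \rmm'_{\rmj_0}$; since $\chi(\rmC) = \chi(\rmC')$ we have $\rmm_{\rmj_0} \equiv \rmm'_{\rmj_0} \pmod 2$, hence $\babs{ \rmm_{\rmj_0} - \rmm'_{\rmj_0} } \ge 2$. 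Because the $\sup$-norm dominates every coordinate, $\inf_{\bx \in \rmC,\, \by \in \rmC'} \lno{ \bx - \by }_\iop \ge \inf_{\rmu \in I_{\rmj_0},\, \rmv \in I'_{\rmj_0}} \babs{ \rmu - \rmv }$, the gap between the $\rmj_0$-th edge intervals of the two cells, which equals $\brb{ \babs{ \rmm_{\rmj_0} - \rmm'_{\rmj_0} } - 1 } 2^{-\rmi} \ge 2^{-\rmi} > \delta \, 2^{-\rmi}$, the last strict inequality using $\delta \in (0,1)$. This is exactly the asserted property, so the claim follows.

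The only mildly delicate points are the coordinatewise lower bound on the $\sup$-norm infimum distance between two axis-aligned boxes and the bookkeeping guaranteeing that every cell at iteration $\rmi$ genuinely lies on the common grid of mesh $2^{-\rmi}$ (both follow directly from Definition~\ref{d:bisection} and the induction above); I do not expect any genuine obstacle beyond these routine verifications, and a picture of the coloring (as in Figure~\ref{fig:damier}) makes the argument transparent.
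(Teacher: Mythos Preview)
Your proof is correct and follows essentially the same checkerboard-coloring idea as the paper: assign to each dyadic cell the parity vector of its integer grid coordinates, and observe that two distinct cells sharing the same parity vector must differ by at least $2$ in some integer coordinate, giving a sup-norm gap of at least $2^{-\rmi} > \delta\,2^{-\rmi}$. The only cosmetic difference is that the paper builds the coloring by an inductive translation-in-each-axis description (see Figure~\ref{fig:damier}), whereas you write it down directly via $\chi(\rmC) = (\rmm_1 \bmod 2,\ld,\rmm_\rmd \bmod 2)$; your formulation is arguably cleaner, but the underlying argument is identical.
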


\begin{proof}
We build our partition by induction.
For a two-dimensional picture, see Figure~\ref{fig:damier}.
\begin{figure}
    \centering
    \begin{tikzpicture}
    \fill[orange] (0,0) rectangle (1,1);
    \begin{scope}
        \clip (0,0.5) rectangle (0.5,1);
        \foreach \x in {0,...,4}
        {
            \draw[yellow,line width=2pt] ({-0.1-0.2*\x}, {0}) -- ({1.9-0.2*\x}, {2});
        }
    \end{scope}
    \foreach \x in {0,...,3}
    {
        \foreach \y in {0,...,3}
        {
            \ifthenelse
            {
                \x = 0 \AND \y = 0
            }
            {
            }
            {
                \fill[yellow] (\x, {\y + 0.5}) rectangle ({\x + 0.5}, {\y + 1});
            }
            \draw[gray] (\y,{\x+0.5}) -- ({\y+1},{\x+0.5});
            \draw[gray] ({\x+0.5},\y) -- ({\x+0.5},{\y+1});
            \draw ({\x + 0.25}, {\y + 0.75}) node {$1$};
            \draw ({\x + 0.75}, {\y + 0.75}) node {$2$};
            \draw ({\x + 0.25}, {\y + 0.25}) node {$3$};
            \draw ({\x + 0.75}, {\y + 0.25}) node {$4$};
        }
    }
    \foreach \x in {1,...,3}
    {
        \draw (0,\x) -- (4,\x);
        \draw (\x,0) -- (\x,4);
    }
    \draw (0,0) rectangle (4,4);
    \end{tikzpicture}
    \caption{Constructing the partition when $\rmd=2$. In orange, the original enumeration $\rmA_0$. In yellow, the family $\cC_\rmi(1)$.}
    \label{fig:damier}
\end{figure}
Denote the elements of the standard basis of $\bbR^\rmd$ by $\be_1, \ld, \be_\rmd$.
For any $\bx \in \dcube$ and all $\rmE \s \dcube$, we denote by $\rmE + \bx$ the \mink{} sum $\bcb{ \by + \bx : \bx \in \rmE }$.
Let $\rmE$ be the collection of all the \hyperc{}s obtained by partitioning $\dcube$ with a standard uniform grid with step size $2^{-\rmi}$, i.e., $\rmE := \bcb{ [0,2^{-\rmi}]^d + \sum_{\rmk = 1}^\rmd \rmr_\rmk \, 2^{-\rmi} \, \be_\rmk: \rmr_1, \ld, \rmr_\rmk \in \{ 0,1, \ld,  2^{\rmi} - 1 \} }$.

Consider the family $\rmA_0$ containing the \hyperc{} $[0,2^{-\rmi}]$ and all other \hyperc{}s of $\rmE$ adjacent to it;
formally, $\rmA_0 := \bcb{ [0,2^{-\rmi}]^\rmd + \sum_{\rmk = 1}^\rmd \rmr_\rmk \, 2^{-\rmi} \, \be_\rmk : \rmr_1,\ld,\rmr_\rmd \in \{ 0,1 \} }$.
Assign to each of the $2^\rmd$ \hyperc{}s in $\rmA_0$ a distinct number between $1$ and $2^\rmd$.
Fix any $\rmk \in \{0, \ld, \rmd-1\}$.
For each \hyperc{} $\rmC \in \rmA_\rmk$, proceeding in the positive direction of the $\rmx_{\rmk +1}$ axis, assign the same number as $\rmC$ to every other \hyperc{} in $\rmE$; formally, assign the same number as $\rmC$ to all \hyperc{}s in $\bcb{ \rmC + 2 \, \rmr \, 2^{-\rmi} \, \be_{\rmk + 1} :  \rmr \in \{1, \ld 2^{\rmi - 1} - 1\} }$.
Denote by $\rmA_{\rmk +1}$ the collection of all \hyperc{}s that have been assigned a number so far.
By construction, $\rmA_{\rmd}$ coincides with the whole $\rmE$ and consists of $2^\rmd$ distinct subfamilies of \hyperc{}s, each containing only \hyperc{}s that have been assigned the same number.
For any number $\rmk \in \{1,\ld, 2^\rmd\}$, we denote by $\cC_\rmi(\rmk)$ the subfamily of all \hyperc{}s numbered with $\rmk$.
Fix any $\rmk \in \{1,\ld, 2^\rmd\}$.
By construction, each $\rmC \in \cC_\rmi(\rmk)$ contains no adjacent \hyperc{}s.
Thus, the smallest distance between two distinct \hyperc{}s $\rmC,\rmC' \in \cC_\rmi(\rmk)$ is $\inf_{\bx \in \rmC, \by \in \rmC'} \lno{\bx - \by}_\iop \ge 2^{-\rmi} > \delta \, 2^{-\rmi}$ for all $\delta \in (0,1)$.
\end{proof}

% \section{Missing Proofs of Section~\ref{s:holder}}
\section{MISSING PROOFS OF SECTION~\ref{s:holder}}
\label{s:proofs-holder}

In this section, we prove Theorem~\ref{t:bah-rate} of Section~\ref{s:holder}. The proof is divided into two parts: one for the upper bound, one for the lower bound. Each time, we restate the corresponding result to ease readability.

\subsection{Upper Bound}

\begin{proposition}[Theorem~\ref{t:bah-rate}, upper bound]
\label{p:upper-bound-bah}
Consider the \bih{} algorithm run with input $\rma,\rmc,\gamma$. 
Let $\rmf \colon \dcube \to \bbR$ be an arbitrary $(\rmc,\gamma)$-\hold{} function with level set $\fa \neq \varnothing$.
Fix any accuracy $\e > 0$. 
Then, for all 
\[
    \rmn 
>
    \kappa \,
    \frac{ 1 }{ \e^{\rmd/\gamma} }\;,
    \qquad\text{where }
    \kappa := \brb{ 2^{\gamma/\rmd} 8^\gamma \, 2\, \rmc }^{\rmd/\gamma}\;,
\]
the output $\rmS_\rmn$ returned after the $\rmn$-th query is an $\e$-approximation of $\fa$.
\end{proposition}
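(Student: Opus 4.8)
The plan is to obtain Proposition~\ref{p:upper-bound-bah} as a direct consequence of the general packing‑number bound in Theorem~\ref{t:general-packing}. The first step is to check that \bih{} (Algorithm~\ref{alg:bi-h}) is exactly Algorithm~\ref{alg:bi} run with $\rmk = 1$, $\rmb = \rmc$, $\beta = \gamma$, and that its constant local interpolants meet the accuracy hypothesis of that theorem. This is immediate from the \hold{} property: if $\rmC'$ has center $\bc_{\rmC'}$ and interpolant $\rmg_{\rmC'} \equiv \rmf(\bc_{\rmC'})$, then
\[
    \sup_{\bx \in \rmC'} \babs{ \rmg_{\rmC'}(\bx) - \rmf(\bx) }
=
    \sup_{\bx \in \rmC'} \babs{ \rmf(\bc_{\rmC'}) - \rmf(\bx) }
\le
    \rmc \, \sup_{\bx \in \rmC'} \lno{ \bc_{\rmC'} - \bx }_\iop^{\gamma}
\le
    \rmc \, \Brb{ \, \sup_{\bx,\by \in \rmC'} \lno{ \bx - \by }_\iop \, }^{\gamma} ,
\]
so $\rmg_{\rmC'}$ is a $(\rmc,\gamma)$-accurate approximation of $\rmf$ on $\rmC'$ in the sense of Definition~\ref{d:accurate-approx}.

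The second step is to invoke Theorem~\ref{t:general-packing}, which then guarantees that $\rmS_\rmn$ is an $\e$-approximation of $\fa$ whenever $\rmn > \rmn(\e)$, where, upon substituting $\rmk=1$, $\rmb=\rmc$, $\beta=\gamma$,
\[
    \rmn(\e)
=
    4^\rmd \sum_{\rmi=0}^{\rmi(\e)-1} \lim_{\delta \to 1^-} \cN \Brb{ \bcb{ \labs{\rmf-\rma} \le 2\rmc\,2^{-\gamma\rmi} }, \; \delta\, 2^{-\rmi} }
\qquad\text{and}\qquad
    \rmi(\e) = \Bce{ (\nicefrac{1}{\gamma}) \log_2(\nicefrac{2\rmc}{\e}) } .
\]
It therefore suffices to show $\rmn(\e) \le \kappa / \e^{\rmd/\gamma}$, since then any $\rmn > \kappa/\e^{\rmd/\gamma}$ automatically exceeds $\rmn(\e)$.

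The third and final step is an elementary estimate of $\rmn(\e)$. Because the inflation scale $2\rmc\,2^{-\gamma\rmi}$ differs from the packing scale $\delta 2^{-\rmi}$, Lemma~\ref{l:trivial-nls} does not apply directly; instead I would bound each term by the packing number of the whole cube, $\cN(\bcb{\labs{\rmf-\rma}\le 2\rmc 2^{-\gamma\rmi}},\,\delta 2^{-\rmi}) \le \cN(\dcube,\delta 2^{-\rmi})$, then use Lemma~\ref{l:pack-unit-cube} to get $\cN(\dcube,\delta 2^{-\rmi}) \le (\lfl{2^\rmi/\delta}+1)^\rmd$, which tends to $(2^\rmi+1)^\rmd \le 2^\rmd 2^{\rmd\rmi}$ as $\delta \to 1^-$. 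Summing the finite geometric series $\sum_{\rmi=0}^{\rmi(\e)-1} 2^{\rmd\rmi} = (2^{\rmd\,\rmi(\e)}-1)/(2^\rmd-1)$ and using the crude bound $2^\rmd/(2^\rmd-1)\le 2$ gives $\rmn(\e) \le 2\cdot 4^\rmd\, 2^{\rmd\,\rmi(\e)}$; finally, from $\rmi(\e) \le (\nicefrac1\gamma)\log_2(\nicefrac{2\rmc}\e)+1$ one gets $2^{\rmi(\e)} \le 2(\nicefrac{2\rmc}\e)^{1/\gamma}$, hence $2^{\rmd\,\rmi(\e)} \le 2^\rmd(\nicefrac{2\rmc}\e)^{\rmd/\gamma}$, and collecting constants yields $\rmn(\e) \le 2\cdot 8^\rmd (2\rmc)^{\rmd/\gamma}/\e^{\rmd/\gamma} = (2^{\gamma/\rmd}8^\gamma\,2\rmc)^{\rmd/\gamma}/\e^{\rmd/\gamma} = \kappa/\e^{\rmd/\gamma}$, as required. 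I do not expect any genuine obstacle here; the only subtlety worth spelling out is the degenerate regime $\e \ge 2\rmc$, in which $\rmi(\e) \le 0$, the sum defining $\rmn(\e)$ is empty by the summation convention of Theorem~\ref{t:general-packing}, $\rmn(\e)=0$, and the statement holds trivially — the rest is bookkeeping to make the ceiling in $\rmi(\e)$ and the geometric‑series constant coalesce into the stated $\kappa$.
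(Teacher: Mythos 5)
Your proposal is correct and mirrors the paper's own proof essentially step for step: apply Theorem~\ref{t:general-packing} with $(\rmk,\rmb,\beta)=(1,\rmc,\gamma)$ after verifying via the \hold{} property that the constant interpolants are $(\rmc,\gamma)$-accurate, handle the degenerate case $\e \ge 2\rmc$ via the empty-sum convention, bound each packing number by $\cN(\dcube,\delta 2^{-\rmi})$ using Lemma~\ref{l:pack-unit-cube}, sum the geometric series, and collect constants to recover $\kappa = 2\cdot 8^\rmd (2\rmc)^{\rmd/\gamma}$. The only cosmetic difference is the exact intermediate form of the cube-packing estimate ($(\lfl{2^\rmi/\delta}+1)^\rmd$ versus the paper's $(2^\rmi/\delta+1)^\rmd$), which leads to the same $2^\rmd 2^{\rmd\rmi}$ bound after taking $\delta\to1^-$.
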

\begin{proof}
The proof is a simple application of Theorem~\ref{t:general-packing}, with $(\rmb,\beta) = (\rmc, \gamma)$.
Since we are assuming that the level set $\fa$ is nonempty, we only need to check that for all iterations $\rmi$ and all \hyperc{}s $\rmC' \in \cC'_\rmi$, the constant \interp{} $\rmg_{\rmC'} \equiv \rmf(\bc_{\rmC'})$ is a $(\rmc,\gamma)$-accurate approximation of $\rmf$ on $\rmC'$.
For any iteration $\rmi$ and all \hyperc{}s $\rmC' \in \cC'_\rmi$, 
we have that 
\[
    \sup_{\bx \in \rmC'} \babs{ \rmg_{\rmC'}(\bx) - \rmf(\bx)} 
= 
    \sup_{\bx \in \rmC'} \babs{ \rmf(\bc_{\rmC'}) - \rmf(\bx) }
\le 
    \rmc \, 2^{- \gamma \rmi} \;,
\]
by definition of $\rmg_{\rmC'}$, the $(\rmc,\gamma)$-\hold{}ness of $\rmf$, and the fact that the diameter of all \hyperc{}s $\rmC' \in \cC'_\rmi$ (in the $\sup$-norm) is $2^{-\rmi}$.
Thus, Theorem~\ref{t:general-packing} implies that for all $\rmn > \rmn(\e)$, the output $\rmS_\rmn$ returned after the $\rmn$-th query is an $\e$-approximation of $\fa$ where $\rmn(\e)$ is
\begin{equation}
    \label{e:bah-bouhd}
    4^\rmd \sum_{\rmi=0}^{\rmi(\e)-1} \lim_{\delta \to 1^-} \cN \Brb{ \bcb{ \labs{\rmf - \rma} \le 2 \, \rmc \, 2^{- \gamma \rmi} }, \ \delta \, 2^{-\rmi} }
\end{equation}
and $\rmi(\e) := \lce{ (\nicefrac{1}{\gamma}) \log_2 (\nicefrac{2\rmc}{\e}) }$.
If $\e \ge 2 \rmc$, than the sum in \eqref{e:bah-bouhd} ranges from $0$ to a \emph{negative} value, thus $\rmn(\e) = 0$ by definition of sum over an empty set and the result is true with $\kappa = 0$. 
Assume then that $\e < 2 \rmc$ so that the sum in \eqref{e:bah-bouhd} is not trivially zero.
Upper-bounding, for any $\delta\in(0,1)$ and all $\rmi\ge 0$,
% \begin{multline*}
%     \cN \Brb{ \bcb{ \labs{\rmf - \rma} \le 2 \, \rmc \, 2^{-\gamma \rmi} }, \ \delta \, 2^{-\rmi} }
% \\
% \le
%     \cN \brb{ \dcube, \ \delta \, 2^{-\rmi} }
% \overset{(\dagger)}{\le}
%     \brb{ 2^{\rmi}/\delta + 1 }^\rmd 
% \le
%     (\nicefrac{2}{\delta})^\rmd \, 2^{\rmd \rmi}    
% \end{multline*}
\[
    \cN \Brb{ \bcb{ \labs{\rmf - \rma} \le 2 \, \rmc \, 2^{-\gamma \rmi} }, \ \delta \, 2^{-\rmi} }
\le
    \cN \brb{ \dcube, \ \delta \, 2^{-\rmi} }
\overset{(\dagger)}{\le}
    \brb{ 2^{\rmi}/\delta + 1 }^\rmd 
\le
    (\nicefrac{2}{\delta})^\rmd \, 2^{\rmd \rmi}    
\]
(for completeness, we include a proof of the known upper bound $(\dagger)$ in Section~\ref{sec:lemmaPacking}, Lemma~\ref{l:pack-unit-cube}) and recognizing the geometric sum below, we can conclude that
\begin{align*}
    \rmn(\e)
& 
\le
    8^\rmd 
    \, \sum_{\rmi=0}^{\lce{ (\nicefrac{1}{\gamma}) \log_2 (\nicefrac{2\rmc}{\e}) }-1} 
    \brb{ 2^{\rmd} }^\rmi
\\
&
=
    8^\rmd \,
    \frac{2^{\rmd \lce{ (\nicefrac{1}{\gamma}) \log_2 (\nicefrac{2\rmc}{\e}) }} -1 }{2^{\rmd} -1}
\\
&
\le
    8^\rmd \,
    \frac{2^{\rmd \lrb{ (\nicefrac{1}{\gamma}) \log_2 (\nicefrac{2\rmc}{\e}) +1 }} }{2^{\rmd} - \brb{ \nicefrac{2^\rmd}{2} }}
=
    2 \, 8^\rmd \, ( 2\, \rmc )^{\rmd/\gamma} \,
    \frac{ 1 }{ \e^{\rmd/\gamma} } \;.
\end{align*}
\end{proof}

\subsection{Lower Bound}

In this section, we prove our lower bound on the worst-case sample complexity of \hold{} functions.
We begin by stating a simple known lemma on \emph{bump} functions.
Bump functions are a standard tool to build lower bounds in nonparametric regression (see, e.g., \cite[Theorem~3.2]{GyKoKrWa-02-DistributionFreeNonparametric}, whose construction we also adapt for our following result and Proposition~\ref{p:lower-grad}).

\begin{lemma}
\label{l:bump}
Fix any amplitude $\alpha>0$, a step-size $\eta \in (0,\nicefrac{1}{4}]$, let $\rmZ := \{0,2\eta, \ldots, \lfl{\nicefrac{1}{2\eta}} 2\eta \}^\rmd \s \dcube$, and fix an arbitrary $\bz = (\rmz_1, \ld, \rmz_\rmd) \in \rmZ$. 
Consider the \emph{bump} functions
\begin{align*}
        \wt{\rmf} \colon \bbR & \to [0,1]
    &
        \rmf_{\alpha,\eta,\bz} \colon \bbR^\rmd & \to \bbR \nonumber
    \\
        \rmx & \mapsto \wt\rmf(\rmx) 
    := 
        \begin{cases}
            \displaystyle{ \exp \lrb{ \frac{-\rmx^2}{1-\rmx^2} } }
                & \text{if } \rmx \in (-1,1)
            \\
            0
                & \text{otherwise}\;,
        \end{cases}
        \qquad
    &
        \bx & \mapsto \rmf_{\alpha,\eta,\bz}(\bx) 
    := 
        \alpha \prod_{\rmj=1}^\rmd \wt\rmf \lrb{\frac{\rmx_\rmj - \rmz_\rmj}{\eta}} \;.
    % \label{e:bump}
\end{align*}
Then $\wt\rmf$ is $3$-\lip{} 
% to be precise, |f'(x)| is maximized by 
% maximize e (2 e^(1/(x^2 - 1)) x)/(1 - x^2)^2 subject to 0<=x<1
% = 3^(3/4) (2 + sqrt(3)) e^(1 + 1/(1/sqrt(3) - 1)) at x = 1/3^(1/4)
and $\rmf_{\alpha,\eta,\bz}$ satisfies:
\begin{enumerate}[topsep = 0pt, parsep = 0pt, itemsep = 0pt]
    \item $\rmf_{\alpha,\eta,\bz}$ is infinitely differentiable;
    \item $\rmf_{\alpha,\eta,\bz}(\bx) \in [0,\alpha)$ for all $\bx \in \bbR^\rmd\m\{\bz\}$, and $\rmf_{\alpha,\eta,\bz}(\bz) = \alpha$;
    \item $\{ \rmf_{\alpha,\eta,\bu_1} > 0 \} \cap \{ \rmf_{\alpha,\eta,\bu_2} > 0 \} = \varnothing$ for any two distinct $\bu_1,\bu_2 \in \rmZ$;
    \item $\lno{\bx-\by}_\iop \le 2\eta$ for all $\bx,\by$ in the closure $\overline{\{ \rmf_{\alpha,\eta,\bz} > 0 \}}$ of $\{ \rmf_{\alpha,\eta,\bz} > 0 \}$ and all $\bz \in \rmZ$.
\end{enumerate}
\end{lemma}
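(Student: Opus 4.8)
The plan is to verify the four enumerated properties of $\rmf_{\alpha,\eta,\bz}$ one at a time, together with the preliminary claim that $\wt\rmf$ is $3$-\lip{}, since all of these are elementary consequences of the explicit formula.

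\textbf{Lipschitzness of $\wt\rmf$.} First I would argue that $\wt\rmf$ is $3$-\lip{} on $\bbR$. Outside $(-1,1)$ it is constant, so the only issue is on $(-1,1)$, where $\wt\rmf$ is smooth; it suffices to bound $\babs{\wt\rmf'(\rmx)}$ for $\rmx \in (-1,1)$. Differentiating, $\wt\rmf'(\rmx) = \wt\rmf(\rmx) \cdot \frac{-2\rmx}{(1-\rmx^2)^2}$, and one checks (by a single-variable calculus estimate, e.g.\ substituting $t = 1-\rmx^2 \in (0,1]$ and maximizing $\frac{2\sqrt{1-t}}{t^2}\exp(-\frac{1-t}{t})$, or just a crude bound) that this stays below $3$ in absolute value. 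Then the mean value theorem upgrades this derivative bound to the global Lipschitz estimate, including across the gluing points $\rmx = \pm 1$ where $\wt\rmf$ and $\wt\rmf'$ vanish.

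\textbf{Properties 1--2.} Property~1 (infinite differentiability of $\rmf_{\alpha,\eta,\bz}$) follows because $\wt\rmf$ is $C^\infty$ on $\bbR$ — the only delicate points are $\rmx = \pm 1$, where all derivatives of $\exp(-\rmx^2/(1-\rmx^2))$ vanish faster than any polynomial blows up, a classical fact — and $\rmf_{\alpha,\eta,\bz}$ is a finite product of rescaled, translated copies of $\wt\rmf$, hence $C^\infty$ on $\bbR^\rmd$. Property~2 follows from $\wt\rmf(0) = 1$ and $\wt\rmf(\rmx) \in [0,1)$ for $\rmx \neq 0$ (immediate from the formula, since $-\rmx^2/(1-\rmx^2) < 0$ strictly when $\rmx \neq 0$): evaluating the product at $\bx = \bz$ gives $\alpha$, and at any $\bx \neq \bz$ at least one factor $\wt\rmf((\rmx_\rmj - \rmz_\rmj)/\eta)$ lies in $[0,1)$ while the rest lie in $[0,1]$, so the product lies in $[0,\alpha)$.

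\textbf{Properties 3--4.} For Property~4, note that $\rmf_{\alpha,\eta,\bz}(\bx) > 0$ forces $(\rmx_\rmj - \rmz_\rmj)/\eta \in (-1,1)$, i.e.\ $\babs{\rmx_\rmj - \rmz_\rmj} < \eta$, for every coordinate $\rmj$; hence $\{\rmf_{\alpha,\eta,\bz} > 0\} \subseteq \bz + (-\eta,\eta)^\rmd$, and passing to closures $\overline{\{\rmf_{\alpha,\eta,\bz} > 0\}} \subseteq \bz + [-\eta,\eta]^\rmd$, a set of $\sup$-norm diameter $2\eta$, which is exactly the claim. Property~3 then comes almost for free: if $\bu_1, \bu_2 \in \rmZ$ are distinct, they differ in some coordinate $\rmj$ by at least $2\eta$ (the grid spacing), so the open cubes $\bu_1 + (-\eta,\eta)^\rmd$ and $\bu_2 + (-\eta,\eta)^\rmd$ are disjoint in that coordinate, and since $\{\rmf_{\alpha,\eta,\bu_i} > 0\}$ is contained in the $i$-th cube, the two positivity sets are disjoint. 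I expect no real obstacle here — the one mildly technical point is the derivative bound giving the constant $3$ in the Lipschitz claim, and the standard fact that $\wt\rmf$ is genuinely $C^\infty$ (not merely continuous) at $\rmx = \pm 1$; both are classical and I would either cite them or dispatch them with a one-line estimate.
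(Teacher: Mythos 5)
Your proposal is correct. The paper explicitly omits the proof of Lemma~\ref{l:bump}, calling it ``a straightforward verification,'' and your argument is exactly the verification one would write down: compute $\wt\rmf'(\rmx) = \wt\rmf(\rmx)\cdot\frac{-2\rmx}{(1-\rmx^2)^2}$ and bound it (a numerical check shows $\sup|\wt\rmf'|\approx 2.2 < 3$), identify $\{\rmf_{\alpha,\eta,\bz}>0\}=\bz+(-\eta,\eta)^\rmd$ and pass to closures for properties 3--4, and use $\wt\rmf(0)=1$, $\wt\rmf<1$ off the origin for property 2.
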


The proof is a straightforward verification and it is therefore omitted.
We now prove our worst-case lower bound for \hold{} functions.

\begin{proposition}[Theorem~\ref{t:bah-rate}, lower bound]
\label{p:lower-hold}
Fix any level $\rma \in \bbR$, any two \hold{} constants $\rmc>0$, $\gamma \in (0,1]$, and an arbitrary accuracy $\e \in \brb{ 0, \, \rmc/(3\rmd 2^\gamma) }$.
% No deterministic algorithm $\rmA$ can guarantee to output an $\e$-approximation of the level set $\fa$ for all given $(\rmc,\gamma)$-\hold{} functions $\rmf$ querying less than $\kappa / \e^{\rmd/\gamma}$ of their values, where $\kappa := ( \nicefrac{\rmc}{12\rmd} )^{\rmd/\gamma}$.
% For each deterministic algorithm $\rmA$ there is a $(\rmc,\gamma)$-\hold{} function $\rmf$ such that, if $\rmA$ queries $\rmn<\kappa / \e^{\rmd/\gamma}$ values of $\rmf$, where $\kappa := ( \nicefrac{\rmc}{12\rmd} )^{\rmd/\gamma}$, then its output set $\rmS_\rmn$ is not an $\e$-approximation of $\fa$.
Let $\rmn<\kappa / \e^{\rmd/\gamma}$ be a positive integer, where $\kappa := ( \nicefrac{\rmc}{12\rmd} )^{\rmd/\gamma}$.
For each deterministic algorithm $\rmA$ there is a $(\rmc,\gamma)$-\hold{} function $\rmf$ such that, if $\rmA$ queries $\rmn$ values of $\rmf$, then its output set $\rmS_\rmn$ is not an $\e$-approximation of $\fa$.
This implies in particular that (recall Definition~\ref{n:smallest-n-queries}),
\[
    \inf_\rmA \sup_\rmf \fn (\rmf,\rmA,\e,\rma)
\ge
    \kappa \, \frac{1}{\e ^{\rmd/\gamma}}\;,
\]
where the $\inf$ is over all deterministic algorithms $\rmA$ and the $\sup$ is over all $(\rmc,\gamma)$-\hold{} functions $\rmf$.
\end{proposition}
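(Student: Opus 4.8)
The plan is to reduce the problem, via a standard two-hypothesis (or rather, many-hypothesis) argument, to showing that a deterministic algorithm making few queries cannot tell the constant function $\rmf \equiv \rma$ apart from a perturbed function that agrees with it everywhere except on one small bump where it reaches the value $\rma + 2\e$. First I would fix a deterministic algorithm $\rmA$ and consider running it against the baseline $\rmf_0 \equiv \rma$; since $\rmA$ is deterministic, its sequence of query points $\bx_1, \ld, \bx_\rmn$ against $\rmf_0$ is a fixed finite set. The key combinatorial observation is that if $\rmn < \kappa/\e^{\rmd/\gamma}$ with $\kappa$ as in the statement, then with step-size $\eta$ chosen of order $\e^{1/\gamma}$ (so that the bump amplitude $\alpha = 2\e$ gives an $\hold$ constant at most $\rmc$), the grid $\rmZ = \{0, 2\eta, \ld\}^\rmd$ has strictly more than $\rmn$ points, hence there exists a grid point $\bz \in \rmZ$ such that \emph{none} of $\bx_1, \ld, \bx_\rmn$ falls in the support $\overline{\{ \rmf_{\alpha,\eta,\bz} > 0 \}}$ of the corresponding bump (using property~3 of Lemma~\ref{l:bump}, the supports of bumps at distinct grid points are disjoint, so the $\rmn$ query points can meet at most $\rmn$ of the $\labs{\rmZ}$ bump supports).

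Next I would define the alternative function $\rmf_1 := \rmf_0 + \rmf_{2\e, \eta, \bz}$, check that it is $(\rmc,\gamma)$-$\hold$ (this is where the precise choice of $\eta$ and the $3$-Lipschitzness of the one-dimensional bump $\wt\rmf$ come in, together with the requirement $\e < \rmc/(3\rmd 2^\gamma)$ and $\eta \le 1/4$), and observe that $\rmf_0(\bx_\rmj) = \rmf_1(\bx_\rmj) = \rma$ for every $\rmj \le \rmn$ by the choice of $\bz$. Consequently the algorithm produces the \emph{same} output set $\rmS_\rmn$ whether the true function is $\rmf_0$ or $\rmf_1$. The final step is to argue that no single set $\rmS$ can be simultaneously an $\e$-approximation of $\{ \rmf_0 = \rma \} = \dcube$ and of $\{ \rmf_1 = \rma \}$. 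Indeed, an $\e$-approximation of $\{\rmf_0 = \rma\}$ must contain all of $\dcube$, in particular the point $\bz$; but $\rmf_1(\bz) = \rma + 2\e$, so $\babs{\rmf_1(\bz) - \rma} = 2\e > \e$, meaning $\bz \notin \{ \labs{\rmf_1 - \rma} \le \e \}$ and hence $\rmS$ violates the second inclusion in Definition~\ref{def:eps-approx} for $\rmf_1$. Thus for at least one of $\rmf_0, \rmf_1$ the output $\rmS_\rmn$ fails to be an $\e$-approximation, and renaming that function $\rmf$ gives the claim; the lower bound on $\inf_\rmA \sup_\rmf \fn$ then follows immediately from Definition~\ref{n:smallest-n-queries}.

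The main obstacle, and the only genuinely technical point, is the bookkeeping on constants: one must pick $\eta$ so that (i) $\labs{\rmZ} = (\lfl{1/(2\eta)} + 1)^\rmd > \rmn$, which forces $\eta$ small enough relative to $\rmn^{-1/\rmd}$, and (ii) the function $\rmf_{2\e,\eta,\bz}$ is $(\rmc,\gamma)$-$\hold$, which by the product structure and the $3$-Lipschitz bound on $\wt\rmf$ requires roughly $2\e \cdot 3\rmd / \eta \le \rmc$ (using that a product of $d$ functions each bounded by $1$ and with the stated Lipschitz-type modulus is itself $\hold$ with the appropriate constant), i.e.\ $\eta \gtrsim \e/\rmc$. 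Threading these two inequalities through, together with the interpolation between the $\gamma$-$\hold$ and Lipschitz regimes (a Lipschitz bump on a set of diameter $\le 2\eta \le 1/2$ is also $(\rmc,\gamma)$-$\hold$ after adjusting constants, which is where $2^\gamma$ and the hypothesis $\e < \rmc/(3\rmd 2^\gamma)$ enter), produces exactly the constant $\kappa = (\rmc/(12\rmd))^{\rmd/\gamma}$ claimed. I expect everything else to be routine.
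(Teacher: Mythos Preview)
Your proposal is correct and follows essentially the same argument as the paper: both use the bump functions of Lemma~\ref{l:bump} indexed by a grid $\rmZ$ of spacing $2\eta$, pick $\eta$ of order $(\e\rmd/\rmc)^{1/\gamma}$ so the bumps are $(\rmc,\gamma)$-\hold{} and $\labs{\rmZ}\ge\kappa/\e^{\rmd/\gamma}$, use pigeonhole to find an unvisited bump, and conclude that the output cannot simultaneously $\e$-approximate the level sets of the constant baseline and of the perturbed function. The only cosmetic difference is that you shift by $\rma$ (baseline $\rmf_0\equiv\rma$) whereas the paper works with $\rma=0$; also, for the counting step you should use the open supports $\{\rmf_{\bz}>0\}$ (as in property~3) rather than their closures, since adjacent closures can touch.
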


Note that the leading constant $\kappa = ( \nicefrac{\rmc}{12\rmd} )^{\rmd/\gamma}$ in our lower bound decreases quickly with the dimension $\rmd$.
Though we keep our focus on sample complexity rates, there are ways to improve the multiplicative constants appearing in our lower bounds. For instance, in the proof below, a larger constant $\kappa := ( \nicefrac{1}{4} \, ( \nicefrac{\rmc}{2} )^{1/\gamma} )^{\rmd}$ can be obtained by replacing bump functions with \emph{spike} functions 
$
    \bx 
    \mapsto 
    \bsb{ 2 \, \e -  \rmc \, \lno{ \bx - \bz }_\iop^\gamma }^+
$,
where $\rmx \mapsto [\rmx]^+ := \max\{\rmx, 0\}$ denotes the positive part of $\rmx$.
We choose to use bump functions instead because they are well-suited for any smoothness (e.g., in Section~\ref{s:lower-gradho}, we will apply the same argument to \gradho{} functions).

% To prove this lower bound we leverage some handy properties of bump functions.
% Their main advantage is that they can be used to prove lower bounds for any smoothness
% (e.g., in Section~\ref{s:lower-gradho}, we will apply the same argument to \gradho{} functions).
% However, the leading constant $\kappa = ( \nicefrac{\rmc}{12\rmd} )^{\rmd/\gamma}$ obtained this way is small.
% A refined lower bound with a larger constant $\kappa := ( \nicefrac{1}{4} \, ( \nicefrac{\rmc}{2} )^{1/\gamma} )^{\rmd}$ can be obtained by replacing bump functions with \emph{spike} functions 
% $
%     \bx 
%     \mapsto 
%     \bsb{ 2 \, \e -  \rmc \, \lno{ \bx - \bz }_\iop^\gamma }^+
% $,
% where $\rmx \mapsto [\rmx]^+ := \max\{\rmx, 0\}$ denotes the positive part of $\rmx$.

\begin{proof}
The following construction is a standard way to prove lower bounds on sample complexity (for a similar example, see \citealt[Theorem~3.2]{GyKoKrWa-02-DistributionFreeNonparametric}).
Consider the set of bump functions $\{\rmf_{\bz}\}_{\bz \in \rmZ}$, where $\rmZ$ and $\rmf_{\bz} := \rmf_{\alpha,\eta,\bz}$ are defined as in Lemma~\ref{l:bump},\footnote{More precisely, $\rmf_{\bz}$ is the restriction of $\rmf_{\alpha,\eta,\bz}$ to $\dcube$.} for $\alpha := 2 \e$ and some $\eta\in(0,\nicefrac{1}{4}]$ to be selected later.
Fix an arbitrary $\bu = (\rmu_1, \ld,\rmu_\rmd) \in \rmZ$. 
We show now that $\rmf_{\bu}$ is $(\rmc,\gamma)$-\hold{}, for a suitable choice of $\eta$.
For all $\bx,\by$ in the closure $\overline{\{\rmf_{\bu} > 0\}}$ of $\{\rmf_{\bu} > 0\}$, Lemma~\ref{l:bump} gives
\begin{align*}
    \babs{ \rmf_{\bu}(\bx) - \rmf_{\bu}(\by) }
&
\le
    2\e \sum_{j=1}^\rmd \labs{ \wt\rmf \lrb{\frac{\rmx_\rmj - \rmu_\rmj}{\eta}} - \wt\rmf \lrb{\frac{\rmy_\rmj - \rmu_\rmj}{\eta}}}
\le
    2\e \sum_{j=1}^\rmd 3 \labs{ \frac{\rmx_\rmj - \rmu_\rmj}{\eta} - \frac{\rmy_\rmj - \rmu_\rmj}{\eta}}
\le
    \frac{6 \e \rmd }{\eta} \lno{ \bx - \by }_\iop
\\
&
=
    \frac{6 \e \rmd }{\eta} \lno{ \bx - \by }_\iop^{1-\gamma} \lno{ \bx - \by }_\iop^\gamma
\le
    \frac{6 \e \rmd }{\eta} (2\eta)^{1-\gamma} \lno{ \bx - \by }_\iop^\gamma
=
    \frac{6 \e \rmd 2^{1-\gamma}}{\eta^\gamma
    } \lno{ \bx - \by }_\iop^\gamma
    \;,
\end{align*}
where the first inequality follows by applying $\rmd$ times the elementary consequence of the triangular inequality 
$
    \babs{ \rmg_1(\bx_1) \rmg_2(\bx_2) - \rmg_1(\by_1) \rmg_2(\by_2) } 
\le 
    \max\bcb{ \lno{\rmg_1}_\iop, \lno{\rmg_2}_\iop } \brb{ \labs{ \rmg_1(\bx_1) - \rmg_1(\by_1) } + \labs{ \rmg_2(\bx_2) - \rmg_2(\by_2) } }
$, 
which holds for any two bounded functions $\rmg_\rmi \colon \rmE_\rmi \s \bbR^{\rmd_\rmi} \to \bbR$ ($\rmd_\rmi \in \bbN, \rmi \in \{1,2\}$).
If $\bx', \by' \notin \{\rmf_{\bu} > 0\}$, then $\rmf_{\bu}(\bx') = 0 = \rmf_{\bu}(\by')$, hence $\babs{ \rmf_{\bu}(\bx') - \rmf_{\bu}(\by') } = 0$. 
Finally, if $\bx \in \{\rmf_{\bu} > 0\}$ but $\by' \notin \overline{\{\rmf_{\bu} > 0\}}$, let $\by$ be the unique\footnote{This follows from two simple observations. First, since $\rmf_{\bu}$ is continuous, the set $\{\rmf_{\bu} > 0\}$ is open, hence $\bx$ belongs to its interior. Second, $\{\rmf_{\bu} > 0\}$ is (the interior of) a \hyperc{}, therefore it is convex.} point in the intersection of the segment $[\bx,\by']$ and the boundary $\partial{\{\rmf_{\bu} > 0\}}$ of $\{\rmf_{\bu} > 0\}$; since $\rmf_{\bu}$ vanishes at the boundary of $\{\rmf_{\bu} > 0\}$, then $\rmf_{\bu}(\by)=\rmf_{\bu}(\by')$, therefore $\babs{ \rmf_{\bu}(\bx) - \rmf_{\bu}(\by') } = \babs{ \rmf_{\bu}(\bx) - \rmf_{\bu}(\by) }$ and we can reapply the argument above for $\bx,\by$ now both in $\overline{\{\rmf_{\bu} > 0\}}$, obtaining
\[
    \babs{ \rmf_{\bu}(\bx) - \rmf_{\bu}(\by') }
=
    \babs{ \rmf_{\bu}(\bx) - \rmf_{\bu}(\by) }
\le
    \frac{6 \e \rmd 2^{1-\gamma}}{\eta^\gamma
    } \lno{ \bx - \by }_\iop^\gamma
\le
    \frac{6 \e \rmd 2^{1-\gamma}}{\eta^\gamma
    } \lno{ \bx - \by' }_\iop^\gamma\;,
\]
where the last inequality follows by $\lno{ \bx - \by }_\iop \le \lno{ \bx - \by' }_\iop$ and the monotonocity of $\rmx \mapsto \rmx^\gamma$ on $[0,\iop)$.
Thus, selecting 
$
    \eta = (6 \e \rmd 2^{1-\gamma}/\rmc)^{1/\gamma}
$ 
so that 
$
    6 \e \rmd 2^{1-\gamma}/\eta^\gamma = \rmc
$,
we obtain that $\rmf_{\bz}$ is $(\rmc,\gamma)$-\hold{} for all $\bz \in \rmZ$.
Moreover, by definition of $\rmZ$ (Lemma~\ref{l:bump}) and $\kappa$, we have that 
\[
    \labs{\rmZ} 
= 
    \lfl{ \frac{1}{2\eta} + 1}^\rmd 
\ge 
    \lrb{ \frac{1}{2\eta} }^\rmd
=
    \lrb{ \frac{1}{2 (6 \e \rmd 2^{1-\gamma}/\rmc)^{1/\gamma} } }^\rmd
=
    \lrb{ \frac{\rmc}{12 \rmd} }^{\rmd/\gamma}
    \frac{1}{\e^{\rmd/\gamma}}
=
    \kappa \frac{1}{\e^{\rmd/\gamma}}\;.
\]
Recall that the sets $\{\rmf_{\bz_1}>0\}$ and $\{\rmf_{\bz_2}>0\}$ are disjoint for distinct $\bz_1,\bz_2 \in \rmZ$ (Lemma~\ref{l:bump}).
Thus, consider an arbitrary deterministic algorithm and assume that only $\rmn < \kappa / \e^{\rmd/\gamma}$ values are queried.
By construction, there exists at least a $\bz \in \cP$ such that, if the algorithm is run for the level set $\{\rmf = 0\}$ of the constant function $\rmf \equiv 0$, no points are queried inside $\{\rmf_{\bz} > 0 \}$ (and being $\rmf$ constant, the algorithm always observes $0$ as feedback for the $\rmn$ evaluations).
Being deterministic, if the algorithm is run for the level set $\{\rmf_{\bz} = 0\}$ of $\rmf_{\bz}$ it will also query no points inside $\{\rmf_{\bz} > 0 \}$, observing only zeros for all the $\rmn$ evaluations.
Since either way, only zeros are observed, using again the fact that the algorithm is deterministic, it returns the same output set $\rmS_\rmn$ in both cases.
This set cannot be simultaneously an $\e$-approximation of both $\{\rmf = 0\}$ and $\{\rmf_{\bz} = 0\}$.
Indeed, for the first set we have that $\{\rmf = 0\} = \dcube = \{\rmf \le \e \}$.
Thus, if $\rmS_\rmn$ is an $\e$-approximation of $\{\rmf = 0\}$ it has to satisfy $\{\rmf = 0\} \s \rmS_\rmn \s \{\rmf \le \e \}$, which in turn gives $\rmS_\rmn = \dcube$.
On the other hand, $\max_{\bx \in \dcube} \rmf_{\bz}(\bx) = 2 \, \e$, which implies that $\{\rmf_{\bz} \le \e \}$ is \emph{properly} included in $\dcube$.
Hence, if $\rmS_\rmn = \dcube$ were also an $\e$-approximation of $\{\rmf_{\bz} = 0\}$, we would have that $\dcube = \rmS_\rmn \s \{\rmf_{\bz} \le \e \} \neq \dcube$, which yields a contradiction.
This concludes the proof of the first claim.
The second claim follows directly from the first part and Definition~\ref{n:smallest-n-queries}.
\end{proof}

\section{MISSING PROOFS OF SECTION~\ref{s:g-hold-biga}}
\label{s:proofs-gradho}

In this section, we present all missing proofs of our results in Section~\ref{s:holder}.
We restate them to ease readability.

\subsection{Upper Bound}

\begin{lemma*}[Lemma~\ref{l:interp-biga}]
Let $\rmf\colon \rmC' \to \bbR$ be a $(\rmc_1,\gamma_1)$-\gradho{} function, for some $\rmc_1>0$ and $\gamma_1\in (0,1]$.
Let $\rmC' \s \dcube$ be a \hyperc{} with diameter $\fd \in (0,1]$ and set of vertices $\rmV'$, i.e., $\rmC' = \prod_{\rmj=1}^\rmd \lsb{ \rmu_\rmj, \rmu_\rmj + \fd }$, for some $\bu := (\rmu_1,\ld,\rmu_\rmd) \in [0, 1 - \fd]^\rmd$, and $\rmV' = \prod_{\rmj=1}^\rmd \lcb{ \rmu_\rmj, \rmu_\rmj + \fd }$. 
The function 
\begin{align*}
	\rmh_{\rmC'}\colon \rmC'
&
	\to \bbR
\nonumber
\\[-2ex]
	\bx
&
	\mapsto 
	\sum_{\bv \in \rmV'}
	\rmf(\bv)
	\prod_{\rmj = 1}^{\rmd}
	\rmp_{\rmv_{\rmj}}(\rmx_{\rmj})\;,
\end{align*}
where
\[
    \rmp_{\rmv_{\rmj}}(\rmx_{\rmj})
:=
	\lrb{ 1 - \frac{\rmx_{\rmj} - \rmu_\rmj}{\fd} } 
	\, \bbI_{\rmv_\rmj = \rmu_\rmj}
	+ \frac{\rmx_\rmj - \rmu_\rmj}{\fd} 
	\, \bbI_{\rmv_\rmj = \rmu_\rmj + \fd} ,
\]
interpolates the $2^\rmd$ pairs $\bcb{ \lrb{ \bv, \rmf (\bv) } }_{\bv \in \rmV'}$ and it satisfies
\[
	\sup_{\bx \in \rmC'} \babs{ \rmh_{\rmC'}(\bx) - \rmf(\bx) } 
\le 
% 	\rmc_1 \rmd \, \lrb{ \sup_{\bx,\by \in \rmC'} \lno{ \bx - \by }_\iop }^{1+\gamma_1}
% =
	\rmc_1 \rmd \, \fd^{1+\gamma_1}\;.
\]
\end{lemma*}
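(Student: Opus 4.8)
The plan is to recognize $\rmh_{\rmC'}$ as the \emph{multilinear interpolant} of $\rmf$ on the cube $\rmC'$ and to exploit the fact that such an interpolant reproduces affine functions exactly. Write $\lambda_\bv(\bx) := \prod_{\rmj=1}^\rmd \rmp_{\rmv_\rmj}(\rmx_\rmj)$, so that $\rmh_{\rmC'}(\bx) = \sum_{\bv \in \rmV'} \lambda_\bv(\bx)\,\rmf(\bv)$. First I would record three elementary facts about these weights, each obtained by factoring the sum over $\bv \in \rmV' = \prod_\rmj \{\rmu_\rmj, \rmu_\rmj + \fd\}$ coordinatewise and using $\rmp_{\rmu_\rmj}(\rmx_\rmj) + \rmp_{\rmu_\rmj + \fd}(\rmx_\rmj) = 1$: (i) $\lambda_\bv(\bx) \ge 0$ for $\bx \in \rmC'$, since each factor lies in $[0,1]$; (ii) $\sum_{\bv \in \rmV'} \lambda_\bv(\bx) = 1$; and (iii) $\sum_{\bv \in \rmV'} \lambda_\bv(\bx)\,\rmv_\rmk = \rmx_\rmk$ for each coordinate $\rmk$. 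Evaluating at a vertex $\bw \in \rmV'$ in (i)--(ii) gives $\lambda_\bv(\bw) = \bbI_{\bv = \bw}$, i.e.\ $\rmh_{\rmC'}$ interpolates the $2^\rmd$ pairs $\bcb{ \lrb{ \bv, \rmf(\bv) } }_{\bv \in \rmV'}$; and (ii) together with (iii) show that $\sum_{\bv \in \rmV'} \lambda_\bv(\bx)\,\rmL(\bv) = \rmL(\bx)$ for every affine map $\rmL$ and every $\bx \in \rmC'$.

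Next I would fix $\bx \in \rmC'$, introduce the first-order Taylor polynomial $\rmL_\bx(\by) := \rmf(\bx) + \nabla\rmf(\bx)\cdot(\by - \bx)$ of $\rmf$ at $\bx$ (which is affine in $\by$), and use affine-reproduction to write
$\rmh_{\rmC'}(\bx) - \rmf(\bx) = \sum_{\bv \in \rmV'} \lambda_\bv(\bx)\brb{\rmf(\bv) - \rmf(\bx)} = \sum_{\bv \in \rmV'} \lambda_\bv(\bx)\brb{\rmf(\bv) - \rmL_\bx(\bv)}$, the second equality because $\sum_\bv \lambda_\bv(\bx)\rmL_\bx(\bv) = \rmL_\bx(\bx) = \rmf(\bx)$. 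Since the $\lambda_\bv(\bx)$ are non-negative and sum to $1$, this exhibits $\rmh_{\rmC'}(\bx) - \rmf(\bx)$ as a convex combination of the quantities $\rmf(\bv) - \rmL_\bx(\bv)$, so $\babs{\rmh_{\rmC'}(\bx) - \rmf(\bx)} \le \max_{\bv \in \rmV'}\babs{\rmf(\bv) - \rmL_\bx(\bv)}$, reducing the whole statement to bounding a first-order Taylor remainder at each vertex.

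Finally I would bound that remainder. Since $\rmf$ extends to a $C^1$ function on $\bbR^\rmd$ and $\rmC'$ is convex, the segment $[\bx, \bv]$ stays in $\rmC'$ and $\rmf(\bv) - \rmL_\bx(\bv) = \int_0^1 \bsb{\nabla\rmf\brb{\bx + \rmt(\bv - \bx)} - \nabla\rmf(\bx)}\cdot(\bv - \bx)\dif\rmt$. Applying $\babs{\bw \cdot \bz} \le \lno{\bw}_\iop\lno{\bz}_1$, then the $(\rmc_1,\gamma_1)$-\gradho{} bound on $\rmC'$ (giving the integrand $\le \rmc_1\rmt^{\gamma_1}\lno{\bv-\bx}_\iop^{\gamma_1}\lno{\bv-\bx}_1$), and then $\int_0^1\rmt^{\gamma_1}\dif\rmt = 1/(1+\gamma_1)$, $\lno{\bv-\bx}_1 \le \rmd\lno{\bv-\bx}_\iop$, and $\lno{\bv-\bx}_\iop \le \fd$, yields $\babs{\rmf(\bv) - \rmL_\bx(\bv)} \le \frac{\rmc_1\rmd}{1+\gamma_1}\fd^{1+\gamma_1} \le \rmc_1\rmd\,\fd^{1+\gamma_1}$. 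Taking the supremum over $\bx \in \rmC'$ finishes the proof.

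There is no deep obstacle here; the one idea that must be spotted is the affine-reproduction step, which is exactly what upgrades the bound from the trivial $O(\fd)$ (obtained by comparing $\rmf(\bv)$ to $\rmf(\bx)$ directly) to the required $O(\fd^{1+\gamma_1})$. The most calculation-heavy piece is verifying identity (iii), $\sum_{\bv \in \rmV'}\lambda_\bv(\bx)\rmv_\rmk = \rmx_\rmk$, and the one subtlety worth stating explicitly is that the gradient-\hold{} inequality is only assumed on $\rmC'$, so one needs the convexity of $\rmC'$ to keep the integration segment inside the domain where the estimate is valid.
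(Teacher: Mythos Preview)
Your proof is correct and takes a genuinely different route from the paper's. The paper argues by induction on the number of coordinates of $\bx$ that are not at an endpoint: it defines $\propp{\rmk}$ as ``if at most $\rmk$ coordinates of $\bx$ lie in $(0,\fd)$ then $\babs{\rmh_{\rmC'}(\bx)-\rmf(\bx)}\le \rmc_1\rmk\,\fd^{1+\gamma_1}$'', freezes all but one free coordinate, uses that $\rmh_{\rmC'}$ is linear in that coordinate together with the mean value theorem on the corresponding one-dimensional slice of $\rmf$, and picks up one factor $\rmc_1\fd^{1+\gamma_1}$ per free coordinate. Your argument instead isolates in one stroke the key structural fact (affine reproduction of the multilinear interpolant), which lets you write $\rmh_{\rmC'}(\bx)-\rmf(\bx)$ as a convex combination of first-order Taylor remainders and then bound each remainder by the integral form. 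What your approach buys is a cleaner proof and the sharper constant $\tfrac{\rmc_1\rmd}{1+\gamma_1}\,\fd^{1+\gamma_1}$; what the paper's coordinate-wise induction buys is that it never needs to name the affine-reproduction property explicitly, working instead with the tensor-product structure of $\rmh_{\rmC'}$ directly. Both are short; yours is the standard finite-element style argument and is arguably more transparent about \emph{why} the exponent improves from $1$ to $1+\gamma_1$.
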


\begin{proof}
Up to applying the translation $\bx \mapsto \bx + \bu$, we can (and do) assume without loss of generality that $\bu = \bzero$. 
The \hyperc{} and its set of vertices then become $\rmC' = [0,\fd]^\rmd$ and $\rmV' = \{0,\fd\}^\rmd$ respectively. 
To verify that $\rmh_{\rmC'}$ interpolates the $2^{\rmd}$ pairs $\bcb{ \lrb{ \bv, \rmf (\bv) } }_{\bv \in \rmV'}$, note that by definition of $\rmh_{\rmC'}$, for any vertex
$
	\bw \in \rmV' 
= 
	\lcb{ 0, \fd }^\rmd
$,
we have
% \begin{multline*}
% 	\rmh_{\rmC'}(\bw) 
% = 
% 	\sum_{\bv\in \rmV'} \rmf(\bv) \prod_{\rmj=1}^\rmd \rmp_{\rmv_\rmj}(\rmw_\rmj)
% \\
% = 
% 	\sum_{\bv\in \rmV'} \rmf(\bv) \prod_{\rmj=1}^\rmd \bbI_{\rmw_j = \rmv_j}
% =
% 	\rmf(\bw) \;.
% \end{multline*}
\[
	\rmh_{\rmC'}(\bw) 
= 
	\sum_{\bv\in \rmV'} \rmf(\bv) \prod_{\rmj=1}^\rmd \rmp_{\rmv_\rmj}(\rmw_\rmj)
= 
	\sum_{\bv\in \rmV'} \rmf(\bv) \prod_{\rmj=1}^\rmd \bbI_{\rmw_j = \rmv_j}
=
	\rmf(\bw) \;.
\]
To prove the inequality, for all $\rmk \in \{0,\ld,\rmd\}$, let $\propp{\rmk}$ be the property:
if an $\bx \in \rmC'$ has at most $\rmk$ components which are not in $\{0,\fd\}$, then it holds that $\babs{ \rmh_{\rmC'}(\bx) - \rmf(\bx) } \le \rmc_1 \rmk \, \fd^{1+\gamma_1}$.
To show that 
$
	\babs{ \rmh_{\rmC'}(\bx) - \rmf(\bx) } 
\le 
	\rmc_1 \rmd \, \fd^{1+\gamma_1}
$ 
for all $\bx \in \rmC'$ (therefore concluding the proof) we then only need to check that the property $\propp{\rmd}$ is true.
We do so by induction.
If $\rmk = 0$, then $\propp{0}$ follows by $\rmh_{\rmC'}$ being an \interp{} for $\bcb{ \lrb{ \bv, \rmf (\bv) } }_{\bv \in \rmV'}$.
Assume now that $\propp{\rmk}$ holds for $\rmk \in \{0 , \ld, \rmd-1 \}$. 
To prove $\propp{\rmk+1}$, fix an arbitrary $\bx := (\rmx_1,\ld,\rmx_\rmd) \in \rmC'$, assume that $\rmk+1$ components of $\bx$ are not in $\{0,\fd\}$ and let $\rmi\in\{1,\ld,\rmd\}$ be any one of them (i.e., $\rmx_\rmi \in (0,\fd)$).
Consider the two univariate functions
\begin{align*}
	\rmh_\rmi \colon [0,\fd] 
& 
	\to \bbR
\\
	\rmt 
& 
	\mapsto \rmh_{\rmC'}(\rmx_1,\ld,\rmx_{i-1},t,\rmx_{i+1},\ld,\rmx_{\rmd})\;,
\\
	\rmf_\rmi \colon [0,\fd]
& 
	\to \bbR
\\
	\rmt 
& 
	\mapsto \rmf(\rmx_1,\ld,\rmx_{i-1},t,\rmx_{i+1},\ld,\rmx_{\rmd})
    \;.
\end{align*}
Being $\rmh_\rmi$ linear (by definition of $\rmh_{\rmC'}$), we get
\begin{equation}
	\label{e:h-i-linear}
	\rmh_\rmi (\rmx_\rmi)
=
	\frac{\fd - \rmx_\rmi}{\fd} \rmh_\rmi(0)
	+
	\frac{\rmx_\rmi}{\fd} \rmh_\rmi(\fd)\;.
\end{equation}
Being $\rmf_\rmi$ continuous on $[0,\fd]$ and derivable on $(0,\fd)$ (by our assumptions on $\rmf$), the mean value theorem applied to $\rmf_\rmi$ on $[0,\rmx_\rmi]$ and $[0,\fd]$ respectively yields the existence of $\xi_1\in (0,\rmx_\rmi)$ and $\xi_2 \in (0,\fd)$ such that
\begin{align}
	\label{e:f-i-1}
	\rmf_\rmi(\rmx_\rmi)
&
	= \rmf_\rmi(0) + \rmf'_\rmi(\xi_1) \, \rmx_i \;,
\\
	\label{e:f-i-2}
	\rmf_\rmi(\fd)
&
	= \rmf_\rmi(0) + \rmf'_\rmi(\xi_2) \, \fd \;.
\end{align}
Putting everything together, we get that
\[
    \babs{ \rmh_{\rmC'}(\bx) - \rmf(\bx) }
= 
    \babs{ \rmh_{\rmi}(\rmx_i) - \rmf_\rmi(\rmx_i) }
\]
(by definition of $\rmh_\rmi$ and $\rmf_\rmi$).
By \eqref{e:h-i-linear} and \eqref{e:f-i-1}, the right-hand side is equal to
\[
    \labs{  
	\frac{\fd - \rmx_\rmi}{\fd} \rmh_\rmi(0)
	+
	\frac{\rmx_\rmi}{\fd} \rmh_\rmi(\fd)
	-
	\rmf_\rmi(0)
	-
	\rmf'_\rmi(\xi_1) \, \rmx_i
	}\;,
\]
which by the triangular inequality is at most
% \begin{multline*}
%     \labs{  
% 	\frac{\fd - \rmx_\rmi}{\fd} \rmh_\rmi(0)
% 	+
% 	\frac{\rmx_\rmi}{\fd} \rmh_\rmi(\fd)
% 	-
% 	\rmf_\rmi(0)
% 	-
% 	\rmx_\rmi \frac{\rmf_\rmi(\fd) - \rmf_\rmi(0)}{\fd}
% 	}
% \\
% 	+
% 	\labs{
% 	\rmx_\rmi \frac{\rmf_\rmi(\fd) - \rmf_\rmi(0)}{\fd}
% 	-
% 	\rmf'_\rmi(\xi_1) \, \rmx_i
% 	}\;.
% \end{multline*}.
\[
    \labs{  
	\frac{\fd - \rmx_\rmi}{\fd} \rmh_\rmi(0)
	+
	\frac{\rmx_\rmi}{\fd} \rmh_\rmi(\fd)
	-
	\rmf_\rmi(0)
	-
	\rmx_\rmi \frac{\rmf_\rmi(\fd) - \rmf_\rmi(0)}{\fd}
	}
	+
	\labs{
	\rmx_\rmi \frac{\rmf_\rmi(\fd) - \rmf_\rmi(0)}{\fd}
	-
	\rmf'_\rmi(\xi_1) \, \rmx_i
	}\;.
\]
By \eqref{e:f-i-2}, this is equal to
% \begin{multline*}
%     \labs{  
% 	\frac{\fd - \rmx_\rmi}{\fd} 
% 		\brb{ \rmh_\rmi(0) - \rmf_\rmi(0) }
% 	+
% 	\frac{\rmx_\rmi}{\fd} 
% 		\brb{ \rmh_\rmi(\fd) - \rmf_\rmi(\fd) }
% 	}
% \\
% 	+
% 	\babs{
% 	\rmx_\rmi \, \rmf'_\rmi(\xi_2)
% 	-
% 	\rmf'_\rmi(\xi_1) \, \rmx_i
% 	}\;.
% \end{multline*}
\[
    \labs{  
	\frac{\fd - \rmx_\rmi}{\fd} 
		\brb{ \rmh_\rmi(0) - \rmf_\rmi(0) }
	+
	\frac{\rmx_\rmi}{\fd} 
		\brb{ \rmh_\rmi(\fd) - \rmf_\rmi(\fd) }
	}
	+
	\babs{
	\rmx_\rmi \, \rmf'_\rmi(\xi_2)
	-
	\rmf'_\rmi(\xi_1) \, \rmx_i
	}\;.
\]
Finally, using again the triangular inequality, we can further upper bound with
% \begin{multline*}
% 	\frac{\fd - \rmx_\rmi}{\fd} 
% 	\underbrace{
% 	\Babs{  
% 		\brb{ \rmh_\rmi(0) - \rmf_\rmi(0) }
% 	}
% 	}_{\le \rmc_1 \rmk \fd^{1+\gamma_1}}
% 	+
% 	\frac{\rmx_\rmi}{\fd} 
% 	\underbrace{
% 	\Babs{
% 		\brb{ \rmh_\rmi(\fd) - \rmf_\rmi(\fd) }
% 	}
% 	}_{\le \rmc_1 \rmk \fd^{1+\gamma_1}}
% \\
% 	+
% 	\underbrace{
% 	\rmx_\rmi
% 	}_{\le \fd}
% 	\underbrace{
% 	\babs{
% 	\rmf'_\rmi(\xi_2)
% 	-
% 	\rmf'_\rmi(\xi_1)
% 	}
% 	}_{\le \rmc_1 \fd^{\gamma_1}}
% 	\le \rmc_1 (\rmk +1) \fd^{1+\gamma_1}\;,
% \end{multline*}
\[
	\frac{\fd - \rmx_\rmi}{\fd} 
	\underbrace{
	\Babs{  
		\brb{ \rmh_\rmi(0) - \rmf_\rmi(0) }
	}
	}_{\le \rmc_1 \rmk \fd^{1+\gamma_1}}
	+
	\frac{\rmx_\rmi}{\fd} 
	\underbrace{
	\Babs{
		\brb{ \rmh_\rmi(\fd) - \rmf_\rmi(\fd) }
	}
	}_{\le \rmc_1 \rmk \fd^{1+\gamma_1}}
	+
	\underbrace{
	\rmx_\rmi
	}_{\le \fd}
	\underbrace{
	\babs{
	\rmf'_\rmi(\xi_2)
	-
	\rmf'_\rmi(\xi_1)
	}
	}_{\le \rmc_1 \fd^{\gamma_1}}
	\le \rmc_1 (\rmk +1) \fd^{1+\gamma_1}\;,
\]
where on the last line, we applied property $\propp{\rmk}$ to the first two terms and we upper bounded the last one leveraging the $(\rmc_1,\gamma_1)$-\hold{}ness of the gradients of $\rmf$. 
This proves $\propp{k+1}$ and concludes the proof.
\end{proof}

\begin{proposition}[Theorem~\ref{t:biga-rate}, upper bound]
Consider the \biga{} algorithm (Algorithm~\ref{alg:bi-g}) run with input $\rma,\rmc_1,\gamma_1$. 
Let $\rmf \colon \dcube \to \bbR$ be an arbitrary $(\rmc_1,\gamma_1)$-\gradho{} function with level set $\fa \neq \varnothing$.
Fix any accuracy $\e > 0$.
Then, for all 
\[
    \rmn 
>
    \kappa \,
    \frac{ 1 }{ \e^{\rmd/(1+\gamma_1)} }\;,
    \text{ if }
    \kappa := \brb{ 2^{ 5 + 4 \gamma_1 + (1+\gamma_1)/\rmd} \, \rmc_1  \rmd }^{\rmd/\gamma}\;,
\]
the output $\rmS_\rmn$ returned after the $\rmn$-th query is an $\e$-approximation of $\fa$.
\end{proposition}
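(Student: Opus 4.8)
The plan is to deduce this the same way Proposition~\ref{p:upper-bound-bah} is deduced for \bih{}: namely, as a direct application of Theorem~\ref{t:general-packing}, the only new ingredient being that the polynomial interpolants used by \biga{} are accurate approximations of $\rmf$, which is exactly the content of Lemma~\ref{l:interp-biga}. So first I would recall that \biga{} runs Algorithm~\ref{alg:bi} with $\rmk = 2^\rmd$, $\rmb = \rmc_1 \rmd$ and $\beta = 1 + \gamma_1$, and that to invoke Theorem~\ref{t:general-packing} it suffices to check that each local interpolant $\rmh_{\rmC'}$ picked at line~\ref{a:loc-interp-g} is a $(\rmb,\beta)$-accurate approximation of $\rmf$ on $\rmC'$ in the sense of Definition~\ref{d:accurate-approx}.

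This verification is immediate: on every iteration $\rmi$ and every \hyperc{} $\rmC' \in \cC'_\rmi$ produced at line~\ref{a:bisect-g}, the diameter (in the $\sup$-norm) of $\rmC'$ is $\fd = 2^{-\rmi} \in (0,1]$, and the restriction of $\rmf$ to $\rmC'$ is still $(\rmc_1,\gamma_1)$-\gradho{}, so Lemma~\ref{l:interp-biga} applies and gives $\sup_{\bx \in \rmC'} |\rmh_{\rmC'}(\bx) - \rmf(\bx)| \le \rmc_1 \rmd\, \fd^{1+\gamma_1} = \rmb\,(\diam \rmC')^{\beta}$. Hence Theorem~\ref{t:general-packing} yields that $\rmS_\rmn$ is an $\e$-approximation of $\fa$ for all $\rmn > \rmn(\e)$, where $\rmn(\e) = 4^\rmd\,2^\rmd \sum_{\rmi=0}^{\rmi(\e)-1} \lim_{\delta \to 1^-} \cN(\{|\rmf - \rma| \le 2\rmb\,2^{-\beta \rmi}\}, \delta\,2^{-\rmi})$ and $\rmi(\e) = \lceil (1/\beta)\log_2(2\rmb/\e)\rceil$. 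If $\e \ge 2\rmb$ the sum is empty and the claim holds with $\kappa = 0$; otherwise I would bound each packing number crudely by that of the whole cube via Lemma~\ref{l:pack-unit-cube}, getting $\cN(\{|\rmf-\rma|\le 2\rmb 2^{-\beta\rmi}\},\delta 2^{-\rmi}) \le \cN(\dcube,\delta 2^{-\rmi}) \le (2^\rmi/\delta + 1)^\rmd \le (2/\delta)^\rmd 2^{\rmd\rmi}$, and let $\delta \to 1^-$ to get $2^\rmd 2^{\rmd\rmi}$. Summing the geometric series $\sum_{\rmi=0}^{\rmi(\e)-1}(2^\rmd)^\rmi \le 2\,(2^\rmd)^{\rmi(\e)-1}$ and using $\rmi(\e) \le (1/\beta)\log_2(2\rmb/\e) + 1$, so that $2^{\rmd\,\rmi(\e)} \le 2^\rmd (2\rmb/\e)^{\rmd/\beta}$, one arrives at $\rmn(\e) \le 2^{4\rmd+1}(2\rmc_1\rmd)^{\rmd/(1+\gamma_1)}\,\e^{-\rmd/(1+\gamma_1)}$, which has the announced form (with $\kappa$ as stated, after regrouping the powers of $2$).

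There is no genuine obstacle in this argument: once Lemma~\ref{l:interp-biga} is in hand, the proof is essentially a verbatim repetition of the computation in Proposition~\ref{p:upper-bound-bah} with $(\rmb,\beta)$ replaced by $(\rmc_1\rmd,\,1+\gamma_1)$ and the extra factor $\rmk = 2^\rmd$ (from querying all $2^\rmd$ vertices of each \hyperc{}) carried along. The only step that needs a line of justification is the accuracy of the interpolants, and that is settled separately by Lemma~\ref{l:interp-biga}; everything else is bookkeeping of constants.
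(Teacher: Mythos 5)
Your proof is correct and follows exactly the route the paper takes: invoke Theorem~\ref{t:general-packing} with $(\rmk,\rmb,\beta)=(2^\rmd,\rmc_1\rmd,1+\gamma_1)$, settle the accuracy hypothesis via Lemma~\ref{l:interp-biga}, bound the packing numbers crudely by $\cN(\dcube,\cdot)$, and sum the geometric series. The constant $2^{4\rmd+1}(2\rmc_1\rmd)^{\rmd/(1+\gamma_1)}$ you obtain agrees with the paper's $2\cdot 16^\rmd(2\rmc_1\rmd)^{\rmd/(1+\gamma_1)}$, and both match the stated $\kappa$ once the evident typo $\rmd/\gamma$ in the proposition is read as $\rmd/(1+\gamma_1)$.
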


\begin{proof}
We proceed as in the proof of Theorem~\ref{t:bah-rate}.
Theorem~\ref{t:general-packing} implies that for all $\rmn > \rmn(\e)$, where $\rmi(\e) := \lce{ (\nicefrac{1}{(1+\gamma_1)}) \log_2 (\nicefrac{2\rmc_1\rmd}{\e}) }$ and $\rmn(\e)$ is
\[
    8^\rmd \, \sum_{\rmi=0}^{\rmi(\e)-1} \lim_{\delta\to 1^-} \cN \Brb{ \bcb{ \labs{\rmf - \rma} \le 2 \rmc_1 \rmd 2^{- (1+\gamma_1) \rmi} }, \delta 2^{-\rmi} }
\;,    
\]
the output $\rmS_\rmn$ returned after the $\rmn$-th query is an $\e$-approximation of $\fa$.
If $\e \ge 2 \rmc_1 \rmd$, than the sum in the definition of $\rmn(\e)$ ranges from $0$ to a \emph{negative} value, thus $\rmn(\e) = 0$ by definition of sum over an empty set and the result is true with $\kappa = 0$. 
Assume then that $\e < 2 \rmc_1 \rmd$ so that such sum is not trivially zero.
Upper-bounding, for any $\delta\in(0,1)$ and all $\rmi\ge 0$,
% \begin{multline*}
%     \cN \Brb{ \bcb{ \labs{\rmf - \rma} \le 2 \rmc_1 \rmd \,  2^{- (1 + \gamma_1) \rmi} }, \ \delta \, 2^{-\rmi} }
% \\
% \le
%     \cN \brb{ \dcube, \ \delta \, 2^{-\rmi} }
% \overset{(\dagger)}{\le}
%     \brb{ 2^{\rmi}/\delta + 1 }^\rmd 
% \le
%     (\nicefrac{2}{\delta})^\rmd \, 2^{\rmd \rmi}
% \end{multline*}
\[
    \cN \Brb{ \bcb{ \labs{\rmf - \rma} \le 2 \rmc_1 \rmd \,  2^{- (1 + \gamma_1) \rmi} }, \ \delta \, 2^{-\rmi} }
\le
    \cN \brb{ \dcube, \ \delta \, 2^{-\rmi} }
\overset{(\dagger)}{\le}
    \brb{ 2^{\rmi}/\delta + 1 }^\rmd 
\le
    (\nicefrac{2}{\delta})^\rmd \, 2^{\rmd \rmi}
\]
(for completeness, we include a proof of the known upper bound $(\dagger)$ in Section~\ref{sec:lemmaPacking}, Lemma~\ref{l:pack-unit-cube}) and recognizing the geometric sum below, we can conclude that
\begin{align*}
    \rmn(\e)
&
\le
    16^\rmd 
    \, \sum_{\rmi=0}^{\lce{ (\nicefrac{1}{(1+\gamma_1)}) \log_2 (\nicefrac{2\rmc_1\rmd}{\e}) }-1} 
    \brb{ 2^{\rmd} }^\rmi
% \\
% &
% =
%     16^\rmd \,
%     \frac{2^{\rmd \lce{ (\nicefrac{1}{(1+\gamma_1)}) \log_2 (\nicefrac{2\rmc_1\rmd}{\e}) } } -1 }{2^{\rmd} -1}
\\
&
\le
    16^\rmd \,
    \frac{2^{\rmd \lrb{ (\nicefrac{1}{(1+\gamma_1)}) \log_2 (\nicefrac{2\rmc_1\rmd}{\e}) +1 } } }{2^{\rmd} - \brb{ \nicefrac{2^\rmd}{2} }}
\\
&
=
    2 \cdot 16^\rmd \, ( 2 \rmc_1 \rmd )^{\rmd/(1+\gamma_1)} \,
    \frac{ 1 }{ \e^{\rmd/(1+\gamma_1)} } \;.
\end{align*}
\end{proof}

\subsection{Lower Bound}
\label{s:lower-gradho}

We conclude the section by proving a matching lower bound. 
Similarly to Proposition~\ref{p:lower-hold}, we adapt some already known techniques from nonparametric regression (see, e.g., \citealt[Theorem~3.2]{GyKoKrWa-02-DistributionFreeNonparametric}).

\begin{proposition}[Theorem~\ref{t:biga-rate}, lower bound]
\label{p:lower-grad}
Fix any level $\rma \in \bbR$, any two \hold{} constants $\rmc_1>0$, $\gamma_1 \in (0,1]$, and any accuracy $\e \in \brb{ 0, \, \rmc_1/(132\rmd 2^{3+\gamma_1}) }$.
% No deterministic algorithm $\rmA$ can guarantee to output an $\e$-approximation of the level set $\fa$ for all given $(\rmc_1,\gamma_1)$-\gradho{} functions $\rmf$ querying less than $\kappa / \e^{\rmd/(1+\gamma_1)}$ of their values, where $\kappa := \brb{ \rmc_1/(528\rmd)}^{\rmd/(1+\gamma_1)}$.
% 
% For each deterministic algorithm $\rmA$ there is a $(\rmc_1,\gamma_1)$-\gradho{} function $\rmf$ such that, if $\rmA$ queries $\rmn<\kappa / \e^{\rmd/\gamma}$ values of $\rmf$, where $\kappa := \brb{ \rmc_1/(528\rmd)}^{\rmd/(1+\gamma_1)}$, then its output set $\rmS_\rmn$ is not an $\e$-approximation of $\fa$.
% 
Let $\rmn<\kappa / \e^{\rmd/(1+\gamma_1)}$ be a positive integer, where $\kappa := \brb{ \rmc_1/(528\rmd)}^{\rmd/(1+\gamma_1)}$.
For each deterministic algorithm $\rmA$ there is a $(\rmc_1,\gamma_1)$-\gradho{} function $\rmf$ such that, if $\rmA$ queries $\rmn$ values of $\rmf$, then its output set $\rmS_\rmn$ is not an $\e$-approximation of $\fa$.
This implies in particular that (recall Definition~\ref{n:smallest-n-queries}),
\[
    \inf_\rmA \sup_\rmf \fn (\rmf,\rmA,\e,\rma)
\ge
    \kappa \, \frac{1}{\e ^{\rmd/(1+\gamma_1)}}\;,
\]
where the $\inf$ is over all deterministic algorithms $\rmA$ and the $\sup$ is over all $(\rmc_1,\gamma_1)$-\gradho{} functions~$\rmf$.
\end{proposition}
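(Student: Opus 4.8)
The plan is to mirror the proof of Proposition~\ref{p:lower-hold} almost verbatim, the only genuinely new work being the verification that the hard instances are $(\rmc_1,\gamma_1)$-\gradho{} rather than merely \hold{}. Up to translating everything by $\rma$ (which preserves gradient-\hold{}ness) I would assume $\rma = 0$. As hard instances I would take the translated bumps $\rmf_{\bz} := \rmf_{\alpha,\eta,\bz}$ from Lemma~\ref{l:bump} with amplitude $\alpha := 2\e$ and step-size $\eta \in (0,\nicefrac{1}{4}]$ fixed below, so that $\rmf_{\bz}(\bz) = 2\e$, the supports $\{\rmf_{\bz}>0\}$ are pairwise disjoint boxes of $\sup$-diameter $2\eta$, and each $\rmf_{\bz}$ is infinitely differentiable on $\bbR^\rmd$. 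The reduction then pits the constant function $\rmf \equiv 0$ against a suitable $\rmf_{\bz}$.

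\textbf{Gradient-\hold{} property of the bumps.} Writing $\rmf_{\bz}(\bx) = 2\e\prod_{\rmj=1}^\rmd \wt{\rmf}\brb{(\rmx_\rmj-\rmz_\rmj)/\eta}$ and differentiating twice, every entry of the Hessian of $\rmf_{\bz}$ is bounded in absolute value by a constant (depending only on $\sup|\wt{\rmf}|,\sup|\wt{\rmf}'|,\sup|\wt{\rmf}''|$, all finite) times $\e/\eta^2$; hence $\nabla\rmf_{\bz}$ is Lipschitz on $\bbR^\rmd$ in the $\sup$-norm with constant of order $\rmd\,\e/\eta^2$. I would then upgrade this to a $\gamma_1$-\hold{} bound exactly as in Proposition~\ref{p:lower-hold}: since $\nabla\rmf_{\bz}$ is supported on the closure of the box $\{\rmf_{\bz}>0\}$ (of $\sup$-diameter $2\eta$) and vanishes on its boundary --- because $\wt{\rmf}$ together with $\wt{\rmf}'$ vanishes at $\pm1$ --- one has $\lno{\bx-\by}_\iop \le (2\eta)^{1-\gamma_1}\lno{\bx-\by}_\iop^{\gamma_1}$ whenever $\bx,\by$ both lie in the box, the gradient difference is zero when both lie outside, and when $\bx$ is inside and $\by$ outside one replaces $\by$ by the point where the segment $[\bx,\by]$ meets the boundary of the (convex) box, at which $\nabla\rmf_{\bz}$ also vanishes. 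This yields $\bno{\nabla\rmf_{\bz}(\bx)-\nabla\rmf_{\bz}(\by)}_\iop \le (\text{const})\,\rmd\,\e\,\eta^{-(1+\gamma_1)}\lno{\bx-\by}_\iop^{\gamma_1}$, so choosing $\eta$ of order $(\e\rmd/\rmc_1)^{1/(1+\gamma_1)}$ (which also satisfies $\eta \le \nicefrac{1}{4}$ for $\e$ in the assumed range) makes $\rmf_{\bz}$ exactly $(\rmc_1,\gamma_1)$-\gradho{}; carrying the explicit constants through recovers the stated value of $\kappa$.

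\textbf{Reduction.} With this $\eta$ the grid satisfies $\labs{\rmZ} = \bfl{\nicefrac{1}{2\eta}+1}^\rmd \ge (2\eta)^{-\rmd} = \kappa/\e^{\rmd/(1+\gamma_1)}$. Given any deterministic algorithm $\rmA$ making $\rmn < \kappa/\e^{\rmd/(1+\gamma_1)}$ queries, when $\rmA$ is run against $\rmf \equiv 0$ its query points meet fewer than $\labs{\rmZ}$ of the disjoint supports, so some $\bz \in \rmZ$ has an untouched support; since $\rmA$ always observes $0$, determinism forces it to return the same set $\rmS_\rmn$ against $\rmf \equiv 0$ and against $\rmf_{\bz}$. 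But $\{\rmf=0\}=\dcube=\{\babs{\rmf}\le\e\}$ forces any $\e$-approximation to equal $\dcube$, whereas $\max_\bx \rmf_{\bz}(\bx) = 2\e > \e$ forces any $\e$-approximation of $\{\rmf_{\bz}=0\}$ to be properly contained in $\dcube$; hence $\rmS_\rmn$ cannot be an $\e$-approximation of both level sets, which proves the first claim, and the displayed bound on $\inf_\rmA\sup_\rmf\fn(\rmf,\rmA,\e,\rma)$ follows from Definition~\ref{n:smallest-n-queries}.

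\textbf{Main obstacle.} The one nontrivial step is the gradient-\hold{} estimate: one must use that it is $\nabla\rmf_{\bz}$ --- not just $\rmf_{\bz}$ --- that vanishes on the boundary of its support for the segment argument to go through, and one must get the exponent $\eta^{-(1+\gamma_1)}$ (rather than $\eta^{-\gamma}$, as for plain \hold{} functions), since this is exactly what turns the grid size into $\kappa/\e^{\rmd/(1+\gamma_1)}$. Everything else is a routine transcription of Proposition~\ref{p:lower-hold}.
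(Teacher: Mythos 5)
Your proposal is correct and follows the same route as the paper: the same bump functions $\rmf_{\alpha,\eta,\bz}$ with $\alpha = 2\e$, the same upgrade from a gradient--Lipschitz estimate on the support to a $\gamma_1$--H\"older one using the $\sup$-diameter $2\eta$, the same choice $\eta \asymp (\e\rmd/\rmc_1)^{1/(1+\gamma_1)}$, and the same two-function indistinguishability argument. You also correctly flag the one subtlety the paper only implicitly relies on when it refers back to Proposition~\ref{p:lower-hold}, namely that the segment trick requires $\nabla\rmf_{\bz}$ (and not merely $\rmf_{\bz}$) to vanish on the boundary of the support, which holds because both $\wt\rmf$ and $\wt\rmf'$ vanish at $\pm 1$.
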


As we pointed out after Proposition~\ref{p:lower-hold}, the leading constant $\kappa = \brb{ \rmc_1/(528\rmd)}^{\rmd/(1+\gamma_1)}$ in our lower bound is small, and could likely be improved using smoothness-specific perturbations of the zero function, instead of the more universal bump functions.

% As we pointed out in Proposition~\ref{p:lower-hold}, note that the leading constant $\kappa = \brb{ \rmc_1/(528\rmd)}^{\rmd/(1+\gamma_1)}$ in our lower bound is small.
% While we keep our focus on the sample complexity rate, a refinement of its multiplicative constant might be possible using different perturbations of the zero function rather than bump functions (see related comment after Proposition~\ref{p:lower-hold}).
% The upside of bump functions, however, is that they naturally adapt to any smoothness.

% To prove this lower bound we leverage once more some handy properties of the bump functions defined in Lemma~\ref{l:bump}.
% As in Proposition~\ref{p:lower-hold}, we pay for the generality of this technique with a small leading constant $\kappa = \brb{ \rmc_1/(528\rmd)}^{\rmd/(1+\gamma_1)}$.
% A refined lower bound with a larger constant might be obtained by replacing bump functions with some other \gradho{} perturbations of the zero function, similarly to the spike functions we mentioned in the comment after Proposition~\ref{p:lower-hold}.

\begin{proof}
The following construction is a standard way to prove lower bounds on sample complexity (for a similar example, see \citealt[Theorem~3.2]{GyKoKrWa-02-DistributionFreeNonparametric}).
Consider the set of bump functions $\{\rmf_{\bz}\}_{\bz \in \rmZ}$, where $\rmZ$ and $\rmf_{\bz} := \rmf_{\alpha,\eta,\bz}$ are defined as in Lemma~\ref{l:bump},\footnote{More precisely, $\rmf_{\bz}$ is the restriction of $\rmf_{\alpha,\eta,\bz}$ to $\dcube$.} for $\alpha := 2 \e$ and some $\eta\in(0,\nicefrac{1}{4}]$ to be selected later.
Fix an arbitrary $\bu = (\rmu_1,\ld,\rmu_\rmd) \in \rmZ$. 
We show now that $\rmf_{\bu}$ is $(\rmc_1,\gamma_1)$-\gradho{}, for a suitable choice of $\eta$.
This is sufficient to prove the result, following the same argument as in the proof of Proposition~\ref{p:lower-hold}.
Note first that for all $\rmi \in \{1,\ld,\rmd\}$ and any $\bx \in \dcube$, denoting by $\partial_\rmi$ the partial derivative with respect to the $\rmi$-th variable,
\[
    \partial_\rmi \rmf_{\bu} (\bx)
=
    \frac{2\e}{\eta} \wt\rmf'\lrb{\frac{\rmx_\rmi - \rmu_\rmi}{\eta}} \prod_{\substack{\rmj = 1\\\rmj\neq\rmi}}^\rmd \wt\rmf\lrb{\frac{\rmx_\rmj - \rmu_\rmj}{\eta}}\;.
\]
Hence, using the fact that $\wt\rmf$ is $3$-\lip{} (Lemma~\ref{l:bump}) and $22$-\gradlip{} (the latter can be done by checking that $\bno{\wt\rmf''}_\iop \le 22$),
% A lazy way is asking wolframalpha (WA):
%     Step 1: ask WA to compute the first derivative
% https://www.wolframalpha.com/input/?i=d%2Fdx+e%5E%281+-+1%2F%281-x%5E2%29%29
%     Step 2: ask WA to compute the first derivative
% https://www.wolframalpha.com/input/?i=d%2Fdx+%28-%282+e%5E%28x%5E2%2F%28x%5E2+-+1%29%29+x%29%2F%281+-+x%5E2%29%5E2%29
%     Step 3: ask WA to compute the third derivative
% https://www.wolframalpha.com/input/?i=d%2Fdx+%282+e%5E%28x%5E2%2F%28x%5E2+-+1%29%29+%283+x%5E4+-+1%29%29%2F%28x%5E2+-+1%29%5E4
%     Step 4: ask WA when the third derivative is >= 0
% https://www.wolframalpha.com/input/?i=+-+x+%286+x%5E6+%2B+3+x%5E4+-+10+x%5E2+%2B+3%29+%3E%3D+0
%     Step 5: evaluate f at the local maxima determined above (only 1 is relevant)
% https://www.wolframalpha.com/widgets/view.jsp?id=571b45205326f3725e5e109f8426786b
for all $\rmi \in \{1,\ld,\rmd\}$ and any $\bx, \by \in \dcube$, we get
\begin{equation}
\label{e:bound-gradient}
    \babs{ \partial_\rmi \rmf_{\bu} (\bx) - \partial_\rmi \rmf_{\bu} (\by) }
\le
    \frac{2\e}{\eta}  3 \lrb{ 22 \labs{ \frac{\rmx_\rmi - \rmu_\rmi}{\eta} - \frac{\rmy_\rmi - \rmu_\rmi}{\eta} } + 3 \sum_{\substack{\rmj=1\\\rmj\neq\rmi}}^\rmd \labs{ \frac{\rmx_\rmj - \rmu_\rmj}{\eta} - \frac{\rmy_\rmj - \rmu_\rmj}{\eta} } }
\le
    \frac{132 \e \rmd }{\eta^2} \lno{\bx - \by}_\iop\;,
    % TC: I am being consciously sloppy
\end{equation}
where the first inequality follows by applying $\rmd$ times the elementary consequence of the triangular inequality 
$
    \babs{ \rmg_1(\bx_1) \rmg_2(\bx_2) - \rmg_1(\by_1) \rmg_2(\by_2) } 
\le 
    \max\bcb{ \lno{\rmg_1}_\iop, \lno{\rmg_2}_\iop } \brb{ \labs{ \rmg_1(\bx_1) - \rmg_1(\by_1) } + \labs{ \rmg_2(\bx_2) - \rmg_2(\by_2) } }
$, 
which holds for any two bounded functions $\rmg_\rmi \colon \rmE_\rmi \s \bbR^{\rmd_\rmi} \to \bbR$ ($\rmd_\rmi \in \bbN, \rmi \in \{1,2\}$), and then using the \lip{}ness of $\wt\rmf$ and $\wt\rmf'$.
Similarly to Proposition~\ref{p:lower-hold}, to prove that $\rmf_{\bu}$ is $(\rmc_1,\gamma_1)$-\gradho{}, we only need to check that the gradient of $\rmf_{\bu}$ is $(\rmc_1,\gamma_1)$-\hold{} on the closure $\overline{\{\rmf_{\bu} > 0\}}$ of $\{\rmf_{\bu} > 0\}$.
For all $\bx,\by \in \overline{\{\rmf_{\bu} > 0\}}$, Equation~\eqref{e:bound-gradient} and Lemma~\ref{l:bump} yield
\begin{align*}
    \bno{ \nabla\rmf_{\bu}(\bx) - \nabla\rmf_{\bu}(\by) }_\iop
&
\le
    \frac{132 \e \rmd}{\eta^2} \lno{\bx - \by}_\iop
=
    \frac{132 \e \rmd}{\eta^2} \lno{ \bx - \by }_\iop^{1-\gamma_1} \lno{ \bx - \by }_\iop^{\gamma_1}
\\
&
\le
    \frac{132 \e \rmd}{\eta^2} (2\eta)^{1-\gamma_1} \lno{ \bx - \by }_\iop^{\gamma_1}
=
    \frac{132 \e \rmd 2^{1-\gamma_1}}{\eta^{1+\gamma_1}
    } \lno{ \bx - \by }_\iop^{\gamma_1}
    \;.
\end{align*}
Therefore, selecting 
$
    \eta = (132 \e \rmd 2^{1-\gamma_1} /\rmc_1)^{1/(1+\gamma_1)}
$ 
so that 
$
    132 \e \rmd 2^{1-\gamma_1} / \eta^{1+\gamma_1} = \rmc_1
$,
we obtain that $\rmf_{\bz}$ is $(\rmc_1,\gamma_1)$-\hold{} for all $\bz \in \rmZ$.
Moreover, by definition of $\rmZ$ and $\kappa$, we have that 
\[
    \labs{\rmZ} 
% = 
%     \lfl{ \frac{1}{2\eta} + 1}^\rmd 
\ge 
    \lrb{ \frac{1}{2\eta} }^\rmd
=
    \lrb{ \frac{1}{2 (132 \e \rmd 2^{1-\gamma_1}/\rmc_1)^{1/(1+\gamma_1)} } }^\rmd
=
    \lrb{ \frac{\rmc_1}{528 \rmd} }^{\rmd/(1+\gamma_1)}
    \frac{1}{\e^{\rmd/(1+\gamma_1)}}
=
    \kappa \frac{1}{\e^{\rmd/(1+\gamma_1)}}\;.
\]
Thus, proceeding as in the proof of Proposition~\ref{p:lower-hold}, no deterministic algorithm can output a set that is an $\e$-approximation of the level set $\{\rmf=0\} = \dcube$ of the constant function $\rmf\equiv 0$ and simultaneously an $\e$-approximation of the level set $\{\rmf_{\bz}=0\}$ of all bump functions $\rmf_{\bz}$ ($\bz \in \rmZ)$, without querying at least one value in each one of the $\labs{\rmZ} \ge \kappa/\e^{\rmd/(1+\gamma_1)}$ disjoint sets $\{\rmf_{\bz}>0\}$ ($\bz \in \rmZ)$ when applied to $\rmf \equiv 0$.
\end{proof}

% \section{Applications to some specific classes of functions}
\section{THE BENEFITS OF ADDITIONAL STRUCTURAL ASSUMPTIONS}
\label{s:faster-rates-nls}

In this section, we present some examples showing how our general results can be applied to yield (slightly) improved sample complexity bounds when $\rmf$ satisfies additional structural assumptions, such as convexity.

% \subsection{Faster rates with smaller \nls{} dimension}
\subsection{\nls{} Dimension}
\label{s:nls-dim}

In order to derive more readable bounds on the number of queries needed to return approximations of level sets, we now introduce a quantity that measures the difficulty of finding such approximations.

\begin{definition}[\nls{} dimension]
\label{d:nls-dim}
Fix any level $\rma\in \bbR$ and a function $\rmf\colon\dcube \to \bbR$. 
We say that $\ds \in [0,\rmd]$ is a \emph{\nls{} (or \nlss{}) dimension} of the level set $\fa$ if there exists $\Cs > 0$ such that (recalling Definition~\ref{d:packing-number} ---packing number)
\begin{equation}
    \label{e:nls-def}
    \forall \rmr \in (0,1) , \
    \cN \Brb{ \bcb{ \labs{ \rmf - \rma } \le \rmr }, \; \rmr }
\le
    \Cs \lrb{ \frac{1}{\rmr} }^{\ds}  \hspace{-9pt} \;.
\end{equation}
% Furthermore, we say that $\ds$ is the \nls{} dimension of $\fa$ if it is the minimum upper bound on the \nls{} dimension of $\fa$ (assuming that such a minimum exists).
\end{definition}
\nls{} dimensions are a natural generalization of the well-known concept of near-optimality dimension, from the field of non-convex optimization (see, e.g., \cite[Section 2.3 and following discussion in Appendix B]{bouttier2020regret}).
The idea behind Inequality~(\ref{e:nls-def}) is that inflated level sets at, say, scale $\rmr\in(0,1)$, are hard to pinpoint if their complement $\bcb{ \labs{ \rmf - \rma } > \rmr }$ is large. 
Since for any increasing sequence $\rmr := \rmr_0 < \rmr_1 < \rmr_2 < \ld$, the set $\bcb{ \labs{ \rmf - \rma } > \rmr }$ of points at which $\rmf$ is more than $\rmr$-away from $\rma$ can be decomposed into a union of ``layers'' $\bcb{ \rmr_0 < \labs{ \rmf - \rma } \le \rmr_1 }, \bcb{ \rmr_1 < \labs{ \rmf - \rma } \le \rmr_2 }, \bcb{ \rmr_2 < \labs{ \rmf - \rma } \le \rmr_3 } , \ld$, and each of these layers $\bcb{ \rmr_{\rms-1} < \labs{ \rmf - \rma } \le \rmr_{\rms} }$ is by definition included in $\bcb{ \labs{ \rmf - \rma } \le \rmr_{\rms} }$, by controlling the size of each of these $\bcb{ \labs{ \rmf - \rma } \le \rmr_{\rms} }$ we can control the size of $\bcb{ \labs{ \rmf - \rma } > \rmr }$. 
Therefore, by controlling how large the inflated level sets $\bcb{ \labs{ \rmf - \rma } \le \rmr }$ can be at all scales $\rmr\in (0,1)$, the parameters $\Cs$ and $\ds$ quantify the difficulty of the level set approximation problem. 
In contrast, scales $\rmr \ge 1$ are not informative since in this case the packing number in \eqref{e:nls-def} is always $1$.
To see this, simply note that if $\rmr \ge 1$, no more than $1$ strictly $\rmr$-separated point can be packed in $\bcb{ \labs{\rmf - \rma} \le \rmr }$, which is included in $\dcube$, that has diameter $1$ (in the $\sup$-norm).

The dimension $\rmd$ of the domain is always a \nls{} dimension of any function $\rmf\colon \dcube \to \bbR$ (with $\Cs = 2^\rmd$;
we add a proof of this claim in Section~\ref{sec:lemmaPacking}, Lemma~\ref{l:trivial-nls}).
Hence, it is sufficient to consider \nls{} dimensions $\ds \le \rmd$, as we do in our Definition~\ref{d:nls-dim}.
While (as we will see in Section~\ref{s:convex-case}) $\ds$ is in general strictly smaller than $\rmd$, bounds expressed in terms of a \nls{} dimension should only be considered slight refinements of worst-case bounds expressed in terms of $\rmd$.
Indeed, the following result shows that, with the exceptions of sets of minimizers and maximizers, level sets $\fa$ of continuous functions $\rmf$ have \nls{} dimension at least $\rmd -1$.
\begin{theorem}[Theorem~\ref{thm:inherenthardness}]
\label{t:level-sets-are-big}
Let $\rmf\colon \dcube \to \bbR$ be a non-constant continuous function, and $\rma \in \bbR$ be any level such that $\min_{\bx \in \dcube} \rmf(\bx) < \rma < \max_{\bx \in \dcube}\rmf(\bx)$.
Then, there exists $\Cs>0$ such that, for all $\rmr>0$,
\[ 
    \cN \brb{ \fa, \; \rmr }
\ge
    \Cs \lrb{ \frac{1}{\rmr} }^{\rmd-1} \;.
\]
\end{theorem}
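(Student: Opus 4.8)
The plan is to show that the level set $\fa$ contains a ``large'' piece that is essentially $(d-1)$-dimensional, by exhibiting many well-separated points inside it. The key idea is a topological one: since $\rmf$ is continuous and takes values both strictly below and strictly above $\rma$, there exist points $\bp$ with $\rmf(\bp) < \rma$ and $\bq$ with $\rmf(\bq) > \rma$. First I would pick a small closed ball $B \s \dcube$ around a point where $\rmf$ crosses the level $\rma$ in a controlled direction: more precisely, using continuity, I would find a tiny cube (or ball) $Q$ on one face of which $\rmf$ is uniformly $< \rma$ and on the opposite face of which $\rmf$ is uniformly $> \rma$. Fix a coordinate direction $j$ along which this happens. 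Then, by the intermediate value theorem applied to each line segment parallel to $\be_j$ through $Q$, every such segment must contain a point of $\fa$. This produces a ``graph-like'' slice of $\fa$ over the $(d-1)$-dimensional cross-section of $Q$.

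The second step is the packing count. The $(d-1)$-dimensional cross-section of $Q$ (a small cube of some fixed side length $\rho_0 > 0$) contains, for each scale $\rmr \in (0,\rho_0)$, at least $\kappa' (1/\rmr)^{d-1}$ points that are pairwise $\rmr$-separated in the first $d-1$ coordinates, for some $\kappa'>0$ depending only on $\rho_0$ and $d$ (a standard grid argument, dual to Lemma~\ref{l:pack-unit-cube}). For each such cross-section point, pick the associated point of $\fa$ lying on the corresponding segment parallel to $\be_j$ guaranteed by the previous step. Since two such chosen points of $\fa$ differ by more than $\rmr$ in at least one of their first $d-1$ coordinates, they are $\rmr$-separated in the $\sup$-norm on $\dcube$. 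Hence $\cN(\fa, \rmr) \ge \kappa' (1/\rmr)^{d-1}$ for all $\rmr \in (0,\rho_0)$. For $\rmr \ge \rho_0$ one just notes $\cN(\fa,\rmr) \ge 1 \ge (\rho_0/\rmr)^{d-1}\cdot$const (using boundedness of the domain, and that $\fa \ne \varnothing$ because $\rmf$ is continuous on a connected set and changes sign relative to $\rma$), so adjusting the constant $\Cs$ down handles all $\rmr > 0$ uniformly.

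The main obstacle is the first step: producing the cube $Q$ with the ``opposite faces on opposite sides of $\rma$'' property, in a way that is robust enough to guarantee the IVT crossing on \emph{every} parallel segment, not just one. The subtlety is that $\rmf(\bp) < \rma < \rmf(\bq)$ gives a crossing along the segment $[\bp,\bq]$, but to get an entire $(d-1)$-parameter family of crossings one needs a uniform (open-set) version. I would get this from continuity: choose $\delta > 0$ with $\rmf < \rma$ on the ball $B(\bp,\delta)$ and $\rmf > \rma$ on $B(\bq,\delta)$; then connect $\bp$ to $\bq$ by a polygonal path inside $\dcube$, and somewhere along this path there is a short axis-parallel segment $[\bs, \bs']$ with $\rmf(\bs) < \rma < \rmf(\bs')$ whose endpoints lie in such small balls that $\rmf < \rma$ on a neighborhood of $\bs$ and $\rmf > \rma$ on a neighborhood of $\bs'$; shrinking, this gives the desired cube $Q$ with side $\rho_0$ small, one pair of opposite faces being inside those two neighborhoods. (If no axis-parallel short segment works directly, one first reparametrizes the path as a concatenation of axis-parallel moves, which is always possible inside the convex set $\dcube$.) Everything after that is the routine grid-packing computation sketched above, and the final constant $\Cs$ depends only on $\rmf$, $\rma$, and $d$ through $\rho_0$, exactly as claimed.
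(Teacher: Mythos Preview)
Your proof shares the paper's core idea: find two small open balls on which $f<a$ and $f>a$ respectively, connect them by a family of parallel segments, apply the intermediate value theorem on each segment to obtain a point of $\fa$, and then pack the $(d-1)$-dimensional cross-section. The paper does exactly this (see the proof of Theorem~\ref{t:level-sets-are-big}), so your overall strategy is correct.

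The main difference is that you insist on \emph{axis-parallel} segments, whereas the paper simply takes segments parallel to $\by-\bx$ (the line joining the two ball centers), packs the orthogonal hyperplane disk $H_{\bx}\cap B_d(\bx,\rho)$ in the Euclidean norm, and converts to the sup norm only at the end via $\lno{\cdot}_\infty \ge \lno{\cdot}_2/\sqrt{d}$. Your route has the cosmetic advantage that the packing count is immediate in the sup norm (no norm conversion), but it forces the polygonal-path reduction, which is the weakest step in your sketch. The claim that an axis-parallel polygonal path from $\bp$ to $\bq$ must contain a \emph{single} axis-parallel segment $[\bs,\bs']$ with $f(\bs)<a<f(\bs')$ is not automatic: along such a path $f$ may equal $a$ exactly at a vertex $\bp_k$ (so that the crossing is split across two consecutive segments in different axis directions), or may equal $a$ on a whole subinterval. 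This can be repaired---e.g., by an induction on the number of segments together with a perturbation of the intermediate vertices, noting separately that if $\{f=a\}$ already contains a $(d-1)$-dimensional box the conclusion is immediate---but it is more delicate than your one-line treatment suggests. The paper's choice of arbitrary-direction parallel segments sidesteps this issue entirely at the modest cost of a routine Euclidean-to-sup-norm conversion; you might consider dropping the axis-parallel requirement and following that line instead.
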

\begin{proof}
For all $\rmd_0 \in \bbN$, $\bz \in \bbR^{\rmd_0}$, and $\rho > 0$, we denote by $\rmB_{\rmd_0}(\bz,\rho)$ the closed $\rmd_0$-dimensional Euclidean ball $\{\bx \in \bbR^{\rmd_0} : \lno{ \bx - \bz }_2 \le \rho \}$ with center $\bz$ and radius $\rho$.
Since $\rma$ is neither the maximum nor the minimum of $\rmf$ and $\rmf$ is continuous, then the two sets $\{\rmf < \rma\}$ and $\{\rmf > \rma\}$ are non-empty and open.
Therefore, we claim that there exist two points $\bx \in \{\rmf < \rma\}$, $\by \in \{\rmf > \rma\}$, and a radius $\rho > 0$, such that $\rmB_\rmd(\bx,\rho) \s \{\rmf < \rma\} \cap (0,1)^\rmd$ and $\rmB_\rmd(\by,\rho) \s \{\rmf > \rma\} \cap (0,1)^\rmd$ (Figure~\ref{fig:sebs-pic}).
\begin{figure}
    \centering
    \begin{tikzpicture}
    \draw[blue] (0.7, 1) circle (0.5);
    \draw[blue] (0.7, 0.6) node[below] {$\rmB_{\rmd}(\bx,\rho)$};
    \draw[blue] (2.1, 3.3) circle (0.5);
    \draw[blue] (2.3, 3.5) node[above right] {$\rmB_{\rmd}(\by,\rho)$};
    \draw[red] plot[smooth] coordinates {(0,3) (1,2) (3,2.3) (4,1.1)};
    \draw[red] (3.9,1.1) node[right] {$\fa$};
    \foreach \x in {0,1/5,2/5,3/5,4/5,1}
    {
        \draw[gray] ({0.7 + (-(1-\x)*0.5 + \x*0.5)*0.854199}, 
                {1 + ((1-\x)*0.5 - \x*0.5)*0.519947})
            -- ({2.1 + (-(1-\x)*0.5 + \x*0.5)*0.854199}, 
                {3.3 + ((1-\x)*0.5 - \x*0.5)*0.519947});
    }
    \draw ({0.7 - 0.5*0.854199}, {1 + 0.5*0.519947}) 
        -- ({0.7 + 0.5*0.854199}, {1 - 0.5*0.519947});
    \draw ({2.1 - 0.5*0.854199}, {3.3 + 0.5*0.519947}) 
        -- ({2.1 + 0.5*0.854199}, {3.3 - 0.5*0.519947});
    \fill[purple] (0.7, 1) circle (0.5pt);
    \draw[purple] (0.8, 1.1) node[below left] {$\bx$};
    \fill[purple] (2.1, 3.3) circle (0.5pt);
    \draw[purple] (2, 3.2) node[above right] {$\by$};
    \draw[->] (0,-0.5) -- (0,4.5);
    \draw[->] (-0.5,0) -- (4.5,0);
    \draw (0,0) rectangle (4,4);
    \draw ({0.7 + 0.5*0.854199 - 0.1}, {1 - 0.5*0.519947 - 0.1}) node[right] {$\rmH_{\bx} \cap \rmB_{\rmd}(\bx,\rho)$};
    \draw ({2.1 + 0.5*0.854199 - 0.1}, {3.3 - 0.5*0.519947 - 0.1}) node[right] {$\rmH_{\bx} \cap $\colorbox{white}{$\rmB_{\rmd}(\by,\rho)$}};
    \draw (0,0) node[below left] {$0$};
    \draw (4,0) node[below] {$1$};
    \draw (0,4) node[left] {$1$};
    \end{tikzpicture}
    \caption{The ``dimension'' of the level set is at least the same as that of hyperplanes $\rmH_{\bx}$ and $\rmH_{\by}$.}
    \label{fig:sebs-pic}
\end{figure}
To see this, note that if $\rmf$ were identically equal to $\rma$ on $(0,1)^\rmd$, then, by continuity, $\rmf$ would be identically equal to $\rma$ on the whole $\dcube$, contradicting the assumption that it is non-constant.
Hence there exists an $\bx \in (0,1)^\rmd$ such that $\rmf(\bx)\neq \rma$.
Assume that $\rmf(\bx)<\rma$ (for the opposite case, proceed analogously). 
Then, being $\{\rmf<\rma\}\cap(0,1)^\rmd$ open, there exists a radius $\rho_1>0$ such that $\rmB_\rmd(\bx,\rho_1) \s \{\rmf<\rma\}\cap(0,1)^\rmd$.
Now, if $\rmf$ were lower than or equal to $\rma$ on $(0,1)^\rmd$, then, by continuity, $\rmf$ would be lower than or equal to $\rma$ on the whole $\dcube$ (and so would be its maximum), contradicting the assumption that $\rma < \max(\rmf)$.
Hence, there exists an $\by \in (0,1)^\rmd$ such that $\rmf(\by) > \rma$.
Then, being $\{\rmf > \rma\} \cap (0,1)^\rmd$ open, there exists a radius $\rho_2 > 0$ such that $\rmB_\rmd(\by,\rho_2) \s \{\rmf > \rma \} \cap (0,1)^\rmd$.
The claim is therefore proven by letting $\rho := \min(\rho_1,\rho_2)$.

Now, for all $\rmr \ge \rho/\sqrt{\rmd}$, we have
\[
    \cN\brb{ \fa, \; \rmr }
\ge
    1
% =
%     \lrb{ \frac{\rho}{\sqrt{\rmd}}}^{\rmd-1} \lrb{ \frac{\sqrt{\rmd}}{\rho} }^{\rmd-1}
\ge
    \lrb{ \frac{\rho}{\sqrt{\rmd}}}^{\rmd-1} \lrb{ \frac{1}{\rmr} }^{\rmd-1}
\]
and the result is proven with $\Cs = \brb{ \rho/\sqrt{\rmd} }^{\rmd-1}$.

Fix now an arbitrary $\rmr \in \brb{0, \rho/\sqrt{\rmd} }$. 
Consider the line $\cL := \bcb{ (1-\rmt) \bx + \rmt \by : \rmt \in \bbR }$ passing through $\bx$ and $\by$ and the two hyperplanes $\rmH_{\bx}$ and $\rmH_{\by}$ orthogonal to $\cL$ and passing through $\bx$ and $\by$ respectively.
We denote, for each $\bz \in \bbR^\rmd$ and $\rmE \s \bbR^\rmd$, the \mink{} sum $\{\bz + \bu : \bu \in \rmE\}$ of $\{\bz\}$ and $\rmE$ by $\bz + \rmE$.
Note that, by construction, $(\by-\bx)+\brb{ \rmH_{\bx} \cap \rmB_\rmd(\bx,\rho) } = \rmH_{\by} \cap \rmB_\rmd(\by,\rho)$, and there is a rigid transformation $\rmT\colon \bbR^\rmd \to \bbR^\rmd$ that maps $\rmH_{\bx} \cap \rmB_\rmd(\bx,\rho)$ into the $(\rmd-1)$-dimensional Euclidean ball $\rmB_{\rmd-1}(\bzero,\rho) =\bcb{\bz \in \bbR^{\rmd-1} : \lno{\bz}_2 \le \rho }$ of $\bbR^{\rmd-1}$ (where, with a slight abuse of notation, we identify from here on out $\bbR^{\rmd-1}$ with the subspace $\bcb{ (\rmz_1, \ld, \rmz_\rmd) \in \bbR^\rmd : \rmz_{\rmd}=0 }$ of $\bbR^\rmd$).
By the symmetry of the Euclidean balls, for all $\bz' \in \brb{ \rmH_{\bx} \cap \rmB_\rmd(\bx,\rho) }$ and $\rho'>0$, the transformed through the rigid transformation $\rmT$ of the intersection $\rmB_\rmd(\bz',\rho') \cap \brb{ \rmH_{\bx} \cap \rmB_\rmd(\bx,\rho) }$ of an arbitrary $\rmd$-dimensional Euclidean ball $\rmB_\rmd(\bz',\rho')$ centered at $\rmH_{\bx} \cap \rmB_\rmd(\bx,\rho)$ and $\rmH_{\bx} \cap \rmB_\rmd(\bx,\rho)$ itself is simply the intersection $\rmB_{\rmd-1}(\bz'',\rho') \cap \rmB_{\rmd-1}(\bzero,\rho)$ between the ball $\rmB_{\rmd-1}(\bzero,\rho)$ and a $(\rmd-1)$-dimensional ball $\rmB_{\rmd-1}(\bz'',\rho')$ with some center $\bz'' \in \rmB_{\rmd-1}(\bzero,\rho)$ and the same radius $\rho'$ of $\rmB_{\rmd}(\bz',\rho')$.
We recall that for any dimension $\rmd_0 \in \bbN$, norm $\lno{\cdot}$ on $\bbR^{\rmd_0}$, scale $\rmr_0 > 0$, and non-empty subset $\rmE_0$ of $\bbR^{\rmd_0}$, 
a set $\rmP \s \rmE_0$ is an $\rmr_0$-packing of $\rmE_0$ in $\bbR^{\rmd_0}$ with respect to $\lno{\cdot}$ if each two distinct points $\bz_1,\bz_2 \in \rmP$ satisfy $\lno{\bz_1 - \bz_2} > \rmr_0$, 
and a set $\rmC \s \rmE_0$ is an $\rmr_0$-covering of $\rmE_0$ in $\bbR^{\rmd_0}$ with respect to $\lno{\cdot}$ if for all $\bz \in \rmE_0$ there exists $\bc \in \rmC$ such that $\lno{\bz-\bc} \le \rmr_0$; we denote by $\cN_{\rmd_0, \lno{\cdot}}(\rmE_0, \rmr_0)$ the largest cardinality of an $\rmr_0$-packing of $\rmE_0$ in $\bbR^{\rmd_0}$ with respect to $\lno{\cdot}$, and by $\cM_{\rmd_0, \lno{\cdot}}(\rmE_0, \rmr_0)$ the smallest cardinality of an $\rmr_0$-covering of $\rmE_0$ in $\bbR^{\rmd_0}$ with respect to $\lno{\cdot}$.
By the previous observation, then, for all $\rmr_0>0$, a set is an $\rmr_0$-packing (resp., covering) of $\rmH_{\bx} \cap \rmB_\rmd(\bx,\rho)$ in $\bbR^{\rmd}$ with respect to the $\rmd$-dimensional Euclidean norm if and only if its transformed under $\rmT$ is an $\rmr_0$-packing (resp., covering) of $\rmB_{\rmd-1}(\bzero,\rho)$ in $\bbR^{\rmd-1}$ with respect to the $(\rmd-1)$-dimensional Euclidean norm.
Hence
\begin{align*}
& 
    \cN_{\rmd, \lno{\cdot}_2} \Brb{ \rmH_{\bx} \cap \rmB_\rmd(\bx,\rho) , \, \sqrt{\rmd} \, \rmr }
\\
& \hspace{16.9526pt} =
    \cN_{\rmd-1, \lno{\cdot}_2} \brb{ \rmB_{\rmd-1}(\bzero,\rho) , \, \sqrt{\rmd} \, \rmr }
\\
& \hspace{16.9526pt} \ge
    \cM_{\rmd-1, \lno{\cdot}_2} \brb{ \rmB_{\rmd-1}(\bzero,\rho) , \, \sqrt{\rmd} \, \rmr }
\ge
    \lrb{ \frac{\rho}{\sqrt{\rmd \, \rmr}} }^{\rmd-1} \;,
\end{align*}
where the first inequality follows from the fact that each packing that is maximal with respect to the inclusion is also a covering, and the second one is a known lower bound on the number of balls with the same radius that are needed to cover a ball with a bigger radius, expressed in terms of a ratio of volumes (see, e.g., \cite[Lemma~5.7]{wainwright2019high}).
Thus, we determined a $\brb{\sqrt{\rmd} \, \rmr}$-packing $\rmP$ of $\rmH_{\bx} \cap \rmB_\rmd(\bx,\rho)$ in $\bbR^\rmd$ with respect to the $\rmd$-dimensional Euclidean norm consisting of $\Cs (\nicefrac{1}{\rmr})^{\rmd-1}$ points, where again $\Cs := \brb{ \rho/\sqrt{\rmd} }^{\rmd-1}$.
For all $\bp \in \rmP$, consider the segment $[\bp, \bp + \by - \bx]$.
By construction, all these segments are parallel, with an endpoint in $\rmH_{\bx} \cap \rmB_\rmd(\bx,\rho) \s \{ \rmf < \rma \}$ and the other in $\rmH_{\by} \cap \rmB_\rmd(\by,\rho) \s \{ \rmf > \rma \}$.
Thus, the $\rmd$-dimensional Euclidean distance between any two points belonging to distinct segments is at least equal to the minimum distance between the corresponding lines, which is strictly greater than $\sqrt{\rmd} \, \rmr$ by construction.
By the continuity of $\rmf$, then, for each $\bp \in \rmP$ there exists a $\bp_\rma$ belonging to the segment $[\bp, \bp + \by - \bx]$ such that $\rmf(\bp_\rma) = \rma$ which, together with the previous remark, implies that the family $\rmP_\rma := \bigcup_{\bp \in \rmP} \bp_\rma$ obtained this way is a $\brb{\sqrt{\rmd} \rmr}$-packing of $\fa$ in $\bbR^\rmd$ with respect to the $\rmd$-dimensional Euclidean norm.
Since the two norms $\lno{\cdot}_\iop$ and $\lno{\cdot}_2$ on $\bbR^\rmd$ satisfy $\lno{\cdot}_\iop \ge \lno{\cdot}_2 / \sqrt{\rmd}$, then $\rmP_\rma$ is also an $\rmr$-packing of $\fa$ in $\bbR^\rmd$ with respect to the $\sup$-norm $\lno{\cdot}_\iop$, therefore its cardinality $\labs{\rmP_\rma} = \Cs (\nicefrac{1}{\rmr})^{\rmd-1}$ is smaller than or equal to the largest cardinality $\cN\brb{\fa, \rmr}$ of an $\rmr$-packing of $\fa$ with respect to the $\sup$-norm $\lno{\cdot}_\iop$.
This concludes the proof.
\end{proof}
We remark that our definition in Equation~\eqref{e:nls-def} could be refined by considering variable $\ds(\rmr)$ and $\Cs(\rmr)$ at different scales $\rmr$.
This would take into account that at different scales, the inflated level sets could have smaller size.
Notably, our general result (Theorem~\ref{t:general-packing}) would naturally adapt to this finer definition as they are stated in terms of packing numbers at decreasing scales.
For the sake of clarity, in this work we will stick to our worst-case definition of \nls{} dimension and we begin by showing how Theorem~\ref{t:general-packing} has an immediate corollary in terms of $\ds$.
Note that in the following results, our \bi{} instances are oblivious to the \nls{} dimension.
Also, recall from the comment before Theorem~\ref{t:level-sets-are-big} that typical level sets have \nls{} dimension $\ds \ge \rmd-1$.

\begin{corollary}[of Theorem~\ref{t:general-packing}]
\label{c:upp-bound-nls-dimension}
Consider a \bai{} algorithm (Algorithm~\ref{alg:bi}) run with input $\rma, \rmk, \rmb,\beta$.
Let $\rmf \colon \dcube \to \bbR$ be an arbitrary function with level set $\fa \neq \varnothing$ and let $\ds\in[0,\rmd]$ be a \nls{} dimension of $\fa$ (Definition~\ref{d:nls-dim}).
Assume that the \interp{}s $\rmg_{\rmC'}$ (defined at line~\ref{a:loc-interp}) are $(\rmb,\beta)$-accurate approximations of $\rmf$ (Definition~\ref{d:accurate-approx}), with $\beta \ge 1$.
Fix any accuracy $\e > 0$. 
Then, for all $\rmn > \rmm(\e)$, the output $\rmS_\rmn$ returned after the $\rmn$-th query is an $\e$-approximation of $\fa$, where
\[
    \rmm(\e)
:=
    \begin{cases}
        \displaystyle{ \kappa_1 + \kappa_2 \log_2 \lrb{ \frac{1}{\e^{1/\beta}} }^+ }
    &
        \text{if } \ds = 0 \;,\\
        \displaystyle{\kappa(\ds) \frac{1}{\e^{\ds/\beta}} }
    &
        \text{if } \ds > 0 \;,\\
    \end{cases}
\]
for $\kappa_1, \kappa_2, \kappa(\ds) \ge 0$ independent of $\e$, that depend exponentially on the dimension $\rmd$, where $\rmx^+ = \max\{\rmx,0\}$ for all $\rmx \in \bbR$.
\end{corollary}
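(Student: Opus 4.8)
The plan is to obtain the bound as a direct consequence of Theorem~\ref{t:general-packing}, fed with the defining inequality~\eqref{e:nls-def} of the \nls{} dimension. First I would invoke Theorem~\ref{t:general-packing}: since $\fa \neq \varnothing$ and the \interp{}s $\rmg_{\rmC'}$ are $(\rmb,\beta)$-accurate, the output $\rmS_\rmn$ is an $\e$-approximation of $\fa$ for every $\rmn > \rmn(\e)$, where, writing $\rmi(\e) := \bce{(\nicefrac{1}{\beta})\log_2(\nicefrac{2\rmb}{\e})}$,
\[
    \rmn(\e)
=
    4^\rmd \rmk \sum_{\rmi=0}^{\rmi(\e)-1} \lim_{\delta \to 1^-} \cN \Brb{ \bcb{ \labs{\rmf - \rma} \le 2\,\rmb\,2^{-\beta\rmi} }, \; \delta\,2^{-\rmi} } \;.
\]
It then suffices to check $\rmn(\e) \le \rmm(\e)$, since then $\rmn > \rmm(\e)$ forces $\rmn > \rmn(\e)$; and one may assume $\e < 2\rmb$, for otherwise $\rmi(\e) \le 0$, so $\rmn(\e) = 0$ and there is nothing to prove.

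Second, I would bound the generic summand uniformly in $\delta \in (0,1)$ and $\rmi \ge 0$. Since $\beta \ge 1$, we have $2\,\rmb\,2^{-\beta\rmi} \le 2\,\rmb\,2^{-\rmi}$, so monotonicity of the packing number with respect to set inclusion gives $\cN(\bcb{\labs{\rmf-\rma} \le 2\rmb\,2^{-\beta\rmi}}, \delta\,2^{-\rmi}) \le \cN(\bcb{\labs{\rmf-\rma} \le 2\rmb\,2^{-\rmi}}, \delta\,2^{-\rmi})$. Now split on $2\rmb\,2^{-\rmi}$. For the indices with $2\rmb\,2^{-\rmi} \ge 1$ --- of which there are only finitely many, a number depending only on $\rmb$ --- I bound trivially by $\cN(\dcube, \delta\,2^{-\rmi}) \le (2^\rmi/\delta + 1)^\rmd$ (Lemma~\ref{l:pack-unit-cube}), which over this range is at most a constant depending only on $\rmd$ and $\rmb$. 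For the indices with $2\rmb\,2^{-\rmi} < 1$, I would apply Lemma~\ref{lm:pack-r1-r2} with $\rmr_1 = \delta\,2^{-\rmi}$, $\rmr_2 = 2\rmb\,2^{-\rmi}$ to rescale the packing radius, and then the defining inequality~\eqref{e:nls-def} at scale $\rmr = 2\rmb\,2^{-\rmi} \in (0,1)$:
\[
    \cN\brb{ \bcb{\labs{\rmf-\rma} \le 2\rmb\,2^{-\rmi}}, \delta\,2^{-\rmi} }
\le
    \brb{ 1 + 8\rmb/\delta }^\rmd \cN\brb{ \bcb{\labs{\rmf-\rma} \le 2\rmb\,2^{-\rmi}}, 2\rmb\,2^{-\rmi} }
\le
    \brb{ 1 + 8\rmb/\delta }^\rmd \Cs \lrb{ 2^\rmi/(2\rmb) }^{\ds} \;.
\]
Letting $\delta \to 1^-$ turns the prefactor into $(1 + 8\rmb)^\rmd$.

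Third, I would sum over $\rmi$. Collecting the two regimes, $\rmn(\e) \le \kappa_0 + 4^\rmd \rmk (1+8\rmb)^\rmd \Cs (2\rmb)^{-\ds} \sum_{\rmi = 0}^{\rmi(\e)-1} 2^{\ds\rmi}$, where $\kappa_0$ is an $\e$-independent constant (exponential in $\rmd$) absorbing the finitely many small-index terms. If $\ds = 0$ the geometric sum equals $\rmi(\e) \le (\nicefrac{1}{\beta})\log_2(\nicefrac{2\rmb}{\e}) + 1$, and separating the $(\nicefrac{1}{\beta})\log_2(2\rmb)$ part (bounded by a constant) yields the announced form $\kappa_1 + \kappa_2 \log_2(\nicefrac{1}{\e^{1/\beta}})^+$, using that for $\e < 1$ the positive part equals $(\nicefrac{1}{\beta})\log_2(\nicefrac{1}{\e})$ while for $\e \in [1, 2\rmb)$ one has $\rmn(\e) = O(1)$. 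If $\ds > 0$ the geometric sum is $(2^{\ds\rmi(\e)} - 1)/(2^\ds - 1) \le 2^\ds (2\rmb)^{\ds/\beta} / ((2^\ds - 1)\,\e^{\ds/\beta})$, again via $\rmi(\e) \le (\nicefrac{1}{\beta})\log_2(\nicefrac{2\rmb}{\e}) + 1$; substituting and absorbing $\kappa_0$ into the leading constant --- legitimate because $\e < 2\rmb$ gives $\e^{-\ds/\beta} > (2\rmb)^{-\ds/\beta}$, a fixed positive lower bound --- gives $\kappa(\ds)\,\e^{-\ds/\beta}$. In each case the constants are built from factors of the form $4^\rmd$, $(1+8\rmb)^\rmd$, $(2\rmb+1)^\rmd$ times quantities depending only on $\rmk, \rmb, \beta, \Cs$ (and $\ds$), hence $\e$-independent and exponential in $\rmd$, as required.

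The main obstacle is the scale mismatch in~\eqref{e:thm-n-eps}: the inflation scale $2\rmb\,2^{-\beta\rmi}$ is not the packing scale $\delta\,2^{-\rmi}$, whereas~\eqref{e:nls-def} is phrased only for the diagonal case where the two coincide. Reconciling them is exactly where the hypothesis $\beta \ge 1$ enters (to obtain the set inclusion) and where Lemma~\ref{lm:pack-r1-r2} is needed (to move from packing radius $\delta\,2^{-\rmi}$ to $2\rmb\,2^{-\rmi}$). The remainder --- isolating the $O(1)$ small-index terms, taking $\delta \to 1^-$, evaluating the geometric series, and verifying $\e$-independence of all constants --- is routine bookkeeping.
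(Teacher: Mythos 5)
Your argument is correct and follows the paper's proof essentially step for step: invoke Theorem~\ref{t:general-packing}, use $\beta\ge 1$ to replace the inflation scale $2\rmb\,2^{-\beta\rmi}$ by $2\rmb\,2^{-\rmi}$, apply Lemma~\ref{lm:pack-r1-r2} to align the packing radius with the inflation radius, feed the NLS inequality~\eqref{e:nls-def} at scale $2\rmb\,2^{-\rmi}$, and evaluate the resulting geometric sum. The only substantive (and slightly cleaner) divergence is the handling of the additive $O(1)$ term when $\ds>0$: you absorb it unconditionally via $\e^{-\ds/\beta}>(2\rmb)^{-\ds/\beta}$ (since $\e<2\rmb$), whereas the paper splits on whether $\e\le 1/(\log_2 4\rmb)^{\beta/\ds}$ and, in the complementary case, retreats to the crude $\rmd$-dimensional packing bound; your version removes that case distinction at no cost.
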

\begin{proof}
Since all the conditions of Theorem~\ref{t:general-packing} are met by assumption, we have that for all $\rmn > \rmn(\e)$, the output $\rmS_\rmn$ returned after the $\rmn$-th query is an $\e$-approximation of $\fa$, where $\rmn(\e)$ is
\begin{equation}
    \label{e:good-stuff}
    4^\rmd \, \rmk \, \sum_{\rmi=0}^{\rmi(\e)-1} \lim_{\delta \to 1^-} \cN \Brb{ \bcb{ \labs{\rmf - \rma} \le 2 \, \rmb \, 2^{- \beta \rmi} }, \ \delta \, 2^{-\rmi} }
\end{equation}
and
$
    \rmi(\e) 
:= 
    \bce{ (\nicefrac{1}{\beta}) \log_2 (\nicefrac{2\rmb}{\e}) }
$.
If $\e \ge 2 \rmb$, than the sum in the definition of $\rmn(\e)$ ranges from $0$ to a \emph{negative} value, thus $\rmn(\e) = 0$ by definition of sum over an empty set and the result is true with $\kappa_1 = \kappa_2 = \kappa(\ds) = 0$. 
Assume then that $\e < 2 \rmb$ so that such sum is not trivially zero.
Being $\beta \ge 1$, we can further upper bound $\rmn(\e)$ by
\[
    4^\rmd \, \rmk \, \sum_{\rmi=0}^{\rmi(\e)-1} \lim_{\delta \to 1^-} \cN \Brb{ \bcb{ \labs{\rmf - \rma} \le 2 \rmb \, 2^{- \rmi} }, \ \delta \, 2^{-\rmi} }\;.
\]
By Lemma~\ref{lm:pack-r1-r2}, the packing number is at most
\[
    \lrb{ 1 + 4 \, \frac{2 \rmb}{\delta} \, \bbI_{2 \rmb > \delta} }^\rmd \cN \Brb{ \bcb{ \labs{\rmf - \rma} \le 2 \rmb \, 2^{- \rmi} }, \ 2 \rmb \, 2^{-\rmi} } \;.
\]
Taking the limit for $\delta\to 1^-$, the first term becomes 
$
    \brb{ 1 + 8\rmb \, \bbI_{2 \rmb \ge 1} }^\rmd
$, while our \nls{} assumption \eqref{e:nls-def} implies that the packing number is smaller than, or equal to
\begin{equation}
\label{e:necessary-evil}
    \bbI_{2\rmb 2^{-\rmi} \ge 1}
    +
    \Cs \lrb{ \frac{1}{2 \rmb \, 2^{-\rmi}} }^{\ds} \bbI_{2\rmb 2^{-\rmi} < 1}
\end{equation}
A direct computation shows that the sum over $\rmi$ of the first term in \eqref{e:necessary-evil} is
\begin{equation}
\label{e:annoying-term}
    \sum_{\rmi = 0}^{\rmi(\e) -1} \bbI_{2b2^{-\rmi} \ge 1}
% =
%     \sum_{\rmi = 0}^{\rmi(\e) -1} \bbI_{\rmi \le \log_2(2b)}
\le
%     \brb{ \log_2(2b) + 1 } \, \bbI_{2\rmb \ge 1}
% =
    \log_2(4b) \, \bbI_{2\rmb \ge 1} \;.
\end{equation}
For the sum over $\rmi$ of second term in \eqref{e:necessary-evil}, we upper bound the indicator function $\bbI_{2\rmb2^{-\rmi}<1}$ with $1$ for all $\rmi$ and study separately the two cases $\ds = 0$ and $\ds > 0$.
If $\ds = 0$, then, by definition of $\rmi(\e)$,
\[
    \sum_{\rmi = 0}^{\rmi(\e) -1} \brb{ 2^{\ds} }^i
= 
    \rmi(\e)
\le 
%     \frac{1}{\beta} \log_2 \lrb{ \frac{2\rmb}{\e} } + 1
% = 
%     \frac{1}{\beta} \log_2 \lrb{ \frac{1}{\e} } + \frac{1}{\beta} \log_2 (2\rmb) + \log_2(2)
% = 
    \log_2 \lrb{ \frac{1}{\e^{1/\beta}} } + \log_2 \brb{ 2\, (2\rmb)^{1/\beta} } \;.
\]
Hence, the result follows by defining the additive and multiplicative terms $\kappa_1$ and $\kappa_2$, respectively, by
$
    \kappa'
    \, 
    \rmk \,
    \brb{
    \log_2(4\rmb) \, \bbI_{2\rmb \ge 1}
    +
    \Cs \log_2 \brb{ 2\, (2\rmb)^{1/\beta} }
    }
$
and 
$
    \kappa' \, \rmk \, \frac{ \Cs }{ (2 \rmb)^{\ds} }
$,
where $\kappa' := \brb{ 4 + 32 \, \rmb \, \bbI_{2 \rmb \ge 1} }^\rmd$.

If on the other hand, $\ds > 0$, recognizing the geometric sum below, we have, by definition of $\rmi(\e)$
\[
    \sum_{\rmi=0}^{\rmi(\e)-1}  \brb{ 2^{\ds} }^{\rmi}
= 
    \frac{\brb{2^{\ds}}^{\rmi(\e)}-1}{2^{\ds}-1}
\le 
%     \frac{\brb{2^{\ds}}^{(\nicefrac{1}{\beta}) \log_2 (\nicefrac{2\rmb}{\e}) +1 }-1}{2^{\ds}-1}
% \le 
%     \frac{2^{\ds}}{2^{\ds} - 1} \, \brb{2^{\ds}}^{(\nicefrac{1}{\beta}) \log_2 (\nicefrac{2\rmb}{\e}) }
% = 
    \frac{2^{\ds} \, (2\,\rmb)^{\ds/\beta} }{2^{\ds} - 1} \frac{1}{\e^{ \ds/\beta }}\;.
\]
Thus, if $2\rmb < 1$ or if simultaneously $2\rmb \ge 1$ and $\e \le 1/ \brb{ \log_2(4\rmb) }^{\beta/\ds}$ ---so that the term $\log_2(4\rmb) \, \bbI_{2\rmb \ge 1}$ in \eqref{e:annoying-term} can be upper bounded by $1/\e^{\ds/\beta} \, \bbI_{2\rmb \ge 1}$--- the result follows by defining $\kappa(\ds)$ as
\[
%     \kappa(\ds)
% :=
    \brb{ 4 + 32 \, \rmb \, \bbI_{2 \rmb \ge 1} }^\rmd \, \rmk \, \frac{ \Cs }{ (2 \rmb)^{\ds} }
    \lrb{
    \bbI_{2\rmb \ge 1}
    +
    \frac{2^{\ds} \, (2\,\rmb)^{\ds/\beta} }{2^{\ds} - 1}
    }\;.
\]
Finally, we consider the case in which $2 \rmb \ge 1$ and $\e > 1/ \brb{ \log_2(4\rmb) }^{\beta/\ds}$.
In this simpler instance, we upper bound $\rmn(\e)$ as in the proofs of Theorems~\ref{t:bah-rate}~and~\ref{t:biga-rate}.
Look back at Equation~\eqref{e:good-stuff}.
Upper-bounding, for any $\delta\in(0,1)$ and all $\rmi\ge 0$,
% \begin{multline*}
%     \cN \Brb{ \bcb{ \labs{\rmf - \rma} \le 2 \, \rmb \, 2^{- \beta \rmi} }, \ \delta \, 2^{-\rmi} }
% \\
% \le
%     \cN \brb{ \dcube, \ \delta \, 2^{-\rmi} }
% \overset{(\dagger)}{\le}
%     \brb{ 2^{\rmi}/\delta + 1 }^\rmd 
% \le
%     (\nicefrac{2}{\delta})^\rmd \, 2^{\rmd \rmi}
% \end{multline*}
\[
    \cN \Brb{ \bcb{ \labs{\rmf - \rma} \le 2 \, \rmb \, 2^{- \beta \rmi} }, \ \delta \, 2^{-\rmi} }
\le
    \cN \brb{ \dcube, \ \delta \, 2^{-\rmi} }
\overset{(\dagger)}{\le}
    \brb{ 2^{\rmi}/\delta + 1 }^\rmd 
\le
    (\nicefrac{2}{\delta})^\rmd \, 2^{\rmd \rmi}
\]
(for completeness, we include a proof of the known upper bound $(\dagger)$ in Section~\ref{sec:lemmaPacking}, Lemma~\ref{l:pack-unit-cube}) and recognizing the geometric sum below, we have
\begin{align*}
&
    \rmn(\e)
\le
    8^\rmd 
    \, \rmk \sum_{\rmi=0}^{\lce{ (\nicefrac{1}{\beta}) \log_2 (\nicefrac{2\rmb}{\e}) }-1} 
    \brb{ 2^{\rmd} }^\rmi
\\
&
=
    8^\rmd \rmk
    \frac{2^{\rmd \lce{ (\nicefrac{1}{\beta}) \log_2 (\nicefrac{2\rmb}{\e}) }} -1 }{2^{\rmd} -1}
\le
    8^\rmd \rmk
    \frac{2^{\rmd \lrb{ (\nicefrac{1}{\beta}) \log_2 (\nicefrac{2\rmb}{\e}) +1 }} }{2^{\rmd} - \brb{ \nicefrac{2^\rmd}{2} }}
\\
&
=
    2 \, 8^\rmd \, \rmk \, ( 2\, \rmb )^{\rmd/\beta} \,
    \frac{ 1 }{ \e^{\rmd/\beta} } \;.
\end{align*}
Finally, using the assumption $\e > 1 / \brb{ \log_2(4\rmb) }^{\beta/\ds}$, we can upper bound the term $1/\e^{\rmd/\beta}$ with
\begin{align*}
    \frac{ 1 }{ \e^{\rmd/\beta} }
&
=
    \lrb{ \frac{ 1 }{ \e } }^{(\rmd - \ds)/\beta}
    \frac{ 1 }{ \e^{\ds/\beta} }
\\
&
<
    \lrb{ \brb{ \log_2(4\rmb) }^{\beta/\ds} }^{(\rmd - \ds)/\beta}
    \frac{ 1 }{ \e^{\ds/\beta} }
\\
&
=
    \brb{ \log_2(4\rmb) }^{(\rmd - \ds)/\ds}
    \frac{ 1 }{ \e^{\ds/\beta} } \;,
\end{align*}
and the results follows after defining the constant 
$
    \kappa(\ds)
:=
    2 \, 8^\rmd \, \rmk \, ( 2\, \rmb )^{\rmd/\beta} \, \brb{ \log_2(4\rmb) }^{(\rmd - \ds)/\ds} \;.
$
\end{proof}
The previous result has the following immediate consequence for \biga{} algorithms.
Recall from the comment before Theorem~\ref{t:level-sets-are-big} that typical level sets have \nls{} dimension $\ds \ge \rmd-1$.
\begin{corollary*}[Corollary~\ref{c:for-biga-main}]
Consider the \biga{} algorithm (Algorithm~\ref{alg:bi-g}) run with input $\rma,\rmc_1,\gamma_1$. 
Let $\rmf \colon \dcube \to \bbR$ be an arbitrary $(\rmc_1,\gamma_1)$-\gradho{} function with level set $\fa \neq \varnothing$ and let $\ds\in[0,\rmd]$ be a \nls{} dimension of $\fa$ (Definition~\ref{d:nls-dim}).
Fix any accuracy $\e > 0$.
Then, for all $\rmn > \rmm(\e)$, the output $\rmS_\rmn$ returned after the $\rmn$-th query is an $\e$-approximation of $\fa$, where
\[
    \rmm(\e)
:=
    \begin{cases}
        \displaystyle{     \kappa_1 + \kappa_2 \log_2 \lrb{ \frac{1}{\e^{1/(1+\gamma_1)}} } }
    &
        \text{if } \ds = 0 \;,\\[3ex]
        \displaystyle{ \kappa(\ds) \frac{1}{\e^{\ds/(1+\gamma_1)}} }
    &
        \text{if } \ds > 0 \;,\\
    \end{cases}
\]
for $\kappa_1, \kappa_2, \kappa(\ds) \ge 0$ independent of $\e$, that depend exponentially on the dimension $\rmd$.
\end{corollary*}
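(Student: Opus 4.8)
The plan is to deduce this statement directly from Corollary~\ref{c:upp-bound-nls-dimension} by instantiating it with the parameters that \biga{} feeds into Algorithm~\ref{alg:bi}. Recall that \biga{} runs Algorithm~\ref{alg:bi} with $\rmk = 2^\rmd$, $\rmb = \rmc_1\rmd$, and $\beta = 1+\gamma_1$. Since $\gamma_1 \in (0,1]$ we have $\beta = 1+\gamma_1 \in (1,2] \subseteq [1,\infty)$, so the extra hypothesis $\beta \ge 1$ required by Corollary~\ref{c:upp-bound-nls-dimension} (as opposed to the more general Theorem~\ref{t:general-packing}) is automatically satisfied. The target function $\rmf$ is $(\rmc_1,\gamma_1)$-\gradho{} with $\fa \neq \varnothing$, and $\ds \in [0,\rmd]$ is an \nls{} dimension of $\fa$, both by assumption. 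Hence the only thing left to verify is that the local \interp{}s $\rmh_{\rmC'}$ chosen at line~\ref{a:loc-interp-g} of Algorithm~\ref{alg:bi-g} are $(\rmb,\beta) = (\rmc_1\rmd,\,1+\gamma_1)$-accurate approximations of $\rmf$ in the sense of Definition~\ref{d:accurate-approx}.

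To check this, I would fix an arbitrary iteration $\rmi \ge 1$ and an arbitrary \hyperc{} $\rmC' \in \cC'_\rmi$. By Definition~\ref{d:bisection}, $\rmC'$ is obtained from $\rmi$ successive bisections of $\dcube$ and therefore has diameter $\fd := 2^{-\rmi} \in (0,1]$ in the $\sup$-norm. Moreover, the restriction of $\rmf$ to $\rmC'$ is again $(\rmc_1,\gamma_1)$-\gradho{}, being the restriction of a continuously differentiable function on $\bbR^\rmd$ with $(\rmc_1,\gamma_1)$-\hold{} gradient. Thus Lemma~\ref{l:interp-biga} applies verbatim to $\rmC'$ and to the polynomial \interp{} $\rmh_{\rmC'}$ defined in~\eqref{e:interp-for-biga-new} (which \biga{} builds from the $2^\rmd$ queried vertex values), giving
\[
    \sup_{\bx \in \rmC'} \babs{ \rmh_{\rmC'}(\bx) - \rmf(\bx) }
\le
    \rmc_1 \rmd \, \fd^{1+\gamma_1}
=
    \rmb \, \Brb{ \, \sup_{\bx,\by \in \rmC'} \lno{ \bx - \by }_\iop \, }^\beta \;,
\]
which is exactly the defining inequality of $(\rmb,\beta)$-accuracy. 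The only \interp{} not covered by this argument is the initializing $\rmh_\rmD \equiv \rma$ on $\rmD = \dcube$, but exactly as in the proof of Theorem~\ref{t:general-packing} this one intervenes only at iteration $\rmi = 0$, for which the required inclusion $\fa \s \dcube$ holds trivially and no accuracy is needed.

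With all the hypotheses of Corollary~\ref{c:upp-bound-nls-dimension} confirmed, I would simply apply it with $\beta = 1+\gamma_1$: substituting this value into its conclusion produces precisely the bound $\rmm(\e)$ in the statement, with constants $\kappa_1,\kappa_2,\kappa(\ds) \ge 0$ that are independent of $\e$ and depend exponentially on $\rmd$ (they additionally depend on $\rmc_1,\gamma_1,\Cs,\ds$ through the explicit expressions appearing in the proof of Corollary~\ref{c:upp-bound-nls-dimension} after plugging in $\rmk = 2^\rmd$ and $\rmb = \rmc_1\rmd$). Since the whole argument is a direct instantiation of an already-proved corollary, there is no genuine obstacle; the only points demanding minor care are the bookkeeping observations that (i) $\beta = 1+\gamma_1 \ge 1$ so that Corollary~\ref{c:upp-bound-nls-dimension} and not merely Theorem~\ref{t:general-packing} can be invoked, and (ii) the $(\rmb,\beta)$-accuracy hypothesis transfers from Lemma~\ref{l:interp-biga} to \emph{every} \hyperc{} produced by the bisection procedure — which it does because each such \hyperc{} has diameter $2^{-\rmi} \in (0,1]$ and the \gradho{} property is inherited by restriction.
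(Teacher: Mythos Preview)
Your proposal is correct and follows essentially the same approach as the paper: the paper's proof consists of the single sentence ``The result follows immediately from Corollary~\ref{c:upp-bound-nls-dimension} and Lemma~\ref{l:interp-biga},'' and you have simply spelled out the details of that instantiation (checking $\beta = 1+\gamma_1 \ge 1$ and invoking Lemma~\ref{l:interp-biga} for the accuracy condition).
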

\begin{proof}
The result follows immediately from Corollary~\ref{c:upp-bound-nls-dimension} and Lemma~\ref{l:interp-biga}.
\end{proof}
The two previous corollaries suggest a general method for solving the level set approximation problem for a given class $\cF$, obtaining bounds that are slightly more refined than the worst-case ones that we saw in Sections~\ref{s:holder}~and~\ref{s:g-hold-biga}.
First, determine a family of \interp{}s that accurately approximate the functions in $\cF$.
Second, obtain for the resulting choice of \bi{} algorithm a sample complexity bound in terms of packing numbers of inflated level sets (as in Theorem~\ref{t:general-packing}).
Third, find a \nls{} dimension of an arbitrary $\rmf \in \cF$.
Importantly, both steps one and three of this process are decoupled from the task of determining approximations of level sets, and as such, they can be investigated independently.
For step two, we can simply plug in Theorem~\ref{t:general-packing}.

In the next section, we will discuss the notable convex case, in which the estimation of the \nls{} dimension is non-trivial. 
As it turns out, this also leads to a \nearo{} sample complexity for \bi{} algorithms.

% \subsection{Upper bound for convex \gradho{} functions}
\subsection{Upper Bound for Convex \gradho{} Functions}
\label{s:convex-case}

In this section we show a non trivial application of the theory presented so far. 
We will prove that our \biga{} algorithm is \nearo{} for approximating the level set of convex \gradlip{} functions.

For the sake of simplicity, we will focus on the approximation of what we call \emph{\proper} level sets (Definition~\ref{d:proper}, below).
Informally, a level set is \proper{} if it is non-empty, bounded away from the set of minimizers (where the problem collapses into a simpler, standard minimization problem) and it is not cropped by the boundary of $\dcube$.
\begin{definition}[Proper level sets]
\label{d:proper}
Fix any level $\rma\in \bbR$, a function $\rmf\colon\dcube\to\bbR$, and a margin $\Delta > 0$.
We say that $\fa$ is a $\Delta$-\emph{\proper} level set (for $\rmf$), if $\fa \neq \varnothing$ and
\[
    \min_{\bx \in \dcube} \rmf(\bx) + \Delta
\le
    \rma
\le
    \min_{\bx \in \partial \dcube} \rmf(\bx)\;,
\]
where we denoted by $\partial\dcube$ the boundary of $\dcube$.
When we need not explicitly refer to the margin $\Delta$, we simply say that $\fa$ is a \emph{proper} level set.
\end{definition}

In this section, we present an upper bound on the number of samples that our \biga{} algorithm needs in order to guarantee that its output is an approximation of the target level set of a convex \gradho{} function.
As we discussed in Section~\ref{s:faster-rates-nls}, now that we established a method on how to get these types of results, we only need to determine a \nls{} dimension $\ds$ (Definition~\ref{d:nls-dim}) of the level set of an arbitrary convex \gradho{} function.
The following results shows that $\ds = \rmd - 1$.

\begin{proposition} 
\label{prop:covering:level:set:convex}
Fix any level $\rma \in \bbR$, two \hold{} constants $\rmc>0,\gamma\in (0,1]$, and an arbitrary convex $(\rmc,\gamma)$-\hold{} function $\rmf \colon \dcube \to \bbR$ with proper level set $\fa$.
Then, there exists a constant $\Cs > 0$ such that
\[
    \forall \rmr \in (0,1) \;, \
    \cN \Brb{ \bcb{ \labs{ \rmf - \rma } \le \rmr }, \; \rmr }
\le
    \Cs \lrb{ \frac{1}{\rmr} }^{\rmd - 1}\;.
\]
\end{proposition}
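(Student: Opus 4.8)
The plan is to bound the $\rmr$-packing number of the inflated level set $\bcb{\labs{\rmf - \rma}\le\rmr}$ by a quantity of order $(1/\rmr)^{\rmd-1}$, using convexity to show that this inflated set is a \emph{thin shell} of thickness comparable to $\rmr^{1/\gamma}$ (roughly) around the $(\rmd-1)$-dimensional boundary $\fa$, together with a standard bound on packing numbers of tubular neighborhoods of hypersurfaces. Concretely, the inflated level set sits between the two convex sublevel sets $\{\rmf \le \rma + \rmr\}$ and $\{\rmf \le \rma - \rmr\}$, so it is contained in the ``ring'' $K_{\rmr} := \{\rmf \le \rma + \rmr\}\setminus \interior\{\rmf \le \rma - \rmr\}$. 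The key geometric fact I would establish is that, because $\fa$ is $\Delta$-proper, there is a fixed point $\bx_0$ (an approximate minimizer) and a fixed $\rho_0>0$ with $\rmB(\bx_0,\rho_0)\subseteq\{\rmf\le\rma-\Delta/2\}$, and by convexity every ray from $\bx_0$ crosses each level set $\{\rmf = \rma + s\}$ (for $\labs{s}$ small) exactly once; this gives a radial parametrization of $K_{\rmr}$ as a graph over the ``sphere'' $\partial\{\rmf\le\rma\}$.

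The main steps, in order: (i) Using properness, fix $\bx_0$ and $\rho_0$ as above; note $\rmf(\bx_0)\le\rma-\Delta$ and the level set $\fa$ lies in the interior of $\dcube$ strictly away from $\bx_0$. (ii) For a direction $\bu\in\sdm$, let $\phi(\bu)$ be the unique $\rmt>0$ with $\bx_0 + \rmt\bu$ on $\fa$ (well-defined by convexity and properness), and let $\phi_{s}(\bu)$ be the analogue for level $\rma+s$. Show a quantitative sandwiching: there is a constant $L$ (depending on $\rmc,\gamma,\Delta,\rho_0$) with $\labs{\phi_{s}(\bu) - \phi(\bu)} \le L\,\labs{s}^{1/\gamma}$ for all $\bu$ and all small $\labs{s}$. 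This uses \hold{}ness of $\rmf$ together with a lower bound on the ``radial slope'' of $\rmf$ coming from convexity (the chord from $\bx_0$, where $\rmf\le\rma-\Delta$, to the level-$\rma$ point forces $\rmf$ to increase at rate at least $\sim\Delta/\diam(\dcube)$ along that radius, and convexity extends this past $\fa$). (iii) Conclude $K_{\rmr}\subseteq \bcb{\bx_0 + \rmt\bu : \bu\in\sdm,\ \labs{\rmt - \phi(\bu)} \le L\rmr^{1/\gamma}}$, a tube of thickness $O(\rmr^{1/\gamma})\le O(\rmr)$ (since $\gamma\le1$, $\rmr<1$) around the compact hypersurface $\fa$. (iv) Bound the $\rmr$-packing number of such a tube by $O(1/\rmr)^{\rmd-1}$: cover $\sdm$ by $O(1/\rmr)^{\rmd-1}$ caps of radius $\sim\rmr$; since $\phi$ is Lipschitz on $\sdm$ (again by convexity/properness, the boundary of a convex body enclosing a ball is Lipschitz-star-shaped about $\bx_0$), each cap lifts to a piece of $K_{\rmr}$ of diameter $O(\rmr)$, hence containing $O(1)$ many $\rmr$-separated points; multiply out. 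Set $\Cs$ to absorb all these constants.

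The main obstacle I expect is step (ii): turning convexity plus $\Delta$-properness into a genuine quantitative lower bound on how fast $\rmf$ grows in the radial direction near $\fa$, uniformly over directions $\bu$, and thereby controlling the thickness of $K_{\rmr}$. One has to be careful that the ``radial derivative'' of $\rmf$ does not degenerate — this is exactly where properness is used (if $\fa$ touched the set of minimizers, $\rmf$ could be flat there and the inflated set would be genuinely $\rmd$-dimensional). The cleanest route is to avoid derivatives entirely: for fixed $\bu$, consider the convex univariate function $\rmt\mapsto g(\rmt):=\rmf(\bx_0+\rmt\bu)$, use $g(0)\le\rma-\Delta$ and $g(\phi(\bu))=\rma$ to get, by convexity, $g(\phi(\bu)+\tau) \ge \rma + \tau\cdot\frac{\Delta}{\phi(\bu)} \ge \rma + \tau\cdot\frac{\Delta}{\diam(\dcube)}$ for $\tau\ge0$; for $\tau<0$ combine convexity with the \hold{} upper bound $g(\phi(\bu)+\tau)\ge\rma - \rmc\labs{\tau}^\gamma$, but also use that $g(\phi_{-\rmr}(\bu))=\rma-\rmr$ together with convexity to pin the slope from below on $[0,\phi(\bu)]$; this yields $\labs{\phi_{\pm\rmr}(\bu)-\phi(\bu)} \le \max\{\rmr\,\diam(\dcube)/\Delta,\ (\rmr/\rmc)^{1/\gamma}\cdot(\text{const})\}$ — both $O(\rmr^{1/\gamma})$ — uniformly in $\bu$. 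Everything else (the packing bound for a Lipschitz tube around a compact hypersurface) is routine via the covering/packing lemmas already recalled in Section~\ref{sec:lemmaPacking}.
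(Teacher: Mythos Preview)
Your strategy is essentially the paper's: radially parametrize from an interior point (the paper uses a minimizer $\bxs$), prove the radial function $\phi$ (the paper's $\rms$) is Lipschitz on the sphere, show the shell has radial thickness $O(\rmr)$, then lift an $O((1/\rmr)^{\rmd-1})$-size sphere covering. Two remarks on execution.

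First, your one-line justification for Lipschitzness of $\phi$ --- that the boundary of a convex body containing a ball of radius $\rho_0$ is Lipschitz-star-shaped about the center --- is correct and considerably cleaner than what the paper does. The paper establishes the same fact by an explicit case analysis on triangles with the sine rule, and it invokes the H\"older assumption midway precisely to lower-bound the distance from $\bxs$ to the supporting line (this is morally your $\rho_0$). Your route gets the Lipschitz constant directly as $\sim \diam(\dcube)^2/\rho_0$ from the gauge of the sublevel set being $(1/\rho_0)$-Lipschitz; the H\"older hypothesis then enters only once, to produce an explicit $\rho_0=(\Delta/(2\rmc))^{1/\gamma}$.

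Second, a slip in step~(ii): the thickness is $O(\rmr)$, not $O(\rmr^{1/\gamma})$. For $\gamma<1$ the latter is strictly smaller and you do not obtain it --- your H\"older inequality $g(\phi(\bu)+\tau)\ge \rma-\rmc|\tau|^\gamma$ gives a \emph{lower} bound $|\tau|\ge(\rmr/\rmc)^{1/\gamma}$ at level $\rma-\rmr$, not an upper bound. Only the convexity argument you also sketch (comparing the average slopes on $[0,\phi_{-\rmr}(\bu)]$ and $[\phi_{-\rmr}(\bu),\phi(\bu)]$) yields the needed upper bound $|\phi_{-\rmr}(\bu)-\phi(\bu)|\le \rmr\cdot\diam(\dcube)/(\Delta-\rmr)=O(\rmr)$; that is exactly what the paper does, and it is all you need. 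Relatedly, the paper disposes of the range $\rmr>\Delta/2$ by the trivial bound from Lemma~\ref{l:trivial-nls}; you should do the same so that $\Delta-\rmr$ stays bounded away from zero.
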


\begin{proof}
Let $\Delta>0$ be a margin such that $\fa$ is $\Delta$-proper.
Fix any $\rmr \in (0,1)$.
If $\rmr > \Delta/2$, we can simply apply Lemma~\ref{l:trivial-nls} in Section~\ref{sec:lemmaPacking} and use the lower bound on $\rmr$ to obtain
\[
    \cN \Brb{ \bcb{ \labs{ \rmf - \rma } \le \rmr }, \; \rmr }
\le
    2^\rmd \, \lrb{ \frac{1}{r} }^\rmd
% =
%     2^\rmd \, \frac{1}{r} \, \lrb{ \frac{1}{r} }^{\rmd-1}
\le
    \frac{2^{\rmd+1}}{\Delta} \, \lrb{ \frac{1}{r} }^{\rmd-1}\;.
\]
Hence, without loss of generality, we can (and do) assume that $\rmr \in (0,\Delta/2)$. 
In the following, we denote by $\sdm$ the $(\rmd-1)$-dimensional unit sphere $\bcb{ \bx \in \bbR^\rmd  : \lno{ \bx }_2 \le 1 }$ with respect to the Euclidean norm $\lno{\cdot}_2$.
Let $\bxs$ be a minimizer of $\rmf$. 
Note that, being $\fa$ a proper level set (Definition~\ref{d:proper}), we have that
$
    \rmf(\bxs) < \rma \le \min_{\bx \in \partial \dcube} \rmf(\bx) \;,
$
therefore $\bxs$ belongs to the interior $(0,1)^\rmd$ of $\dcube$.
Now, for each $\bz \in \sdm$, let $\bp_{\bz}$ be the unique element of $\partial\dcube$ such that $(\bp_{\bz} - \bxs)/\lno{ \bp_{\bz} - \bx_m }_2 = \bz$ (Figure~\ref{fig:nls-convex-1}) 
\begin{figure}
    \centering
    \begin{tikzpicture}
    \draw[->] (-0.5,1) -- (2.5,1);
    \draw[->] (0,0.5) -- (0,3.5);
    \draw (0,1) rectangle (2,3);
    \draw[red] (0.4,2) circle (2);
    \draw[red,->] (0.4,1.99) -- (2.4,1.99) node[right] {$\bz$};
    \draw[->] (0.4,2.01) -- (2,2.01) node[above left] {$\bp_{\bz}$};
    \draw[magenta,decorate,decoration={brace,amplitude=5pt}] (0.4,2) -- (1.3,2) node[midway,above,yshift=4pt] {$\rmt_{\bz}$};
    \fill (0.4,2) circle (1pt);
    \draw[magenta] (0.7,2) ellipse (0.6 and 0.9);
    \draw (0.4,2) node[below] {$\bxs$};
    \draw (0,1) node[below left] {$0$};
    \draw (2,1) node[below] {$1$};
    \draw (0,3) node[left] {$1$};
    \end{tikzpicture}
    \caption{In black, the unit \hyperc{} $\dcube$; in red, the unit sphere centered at the minimizer $\bxs$; in magenta, the level set $\fa$.}
    \label{fig:nls-convex-1}
\end{figure}
and define the convex univariate function
\begin{align*}
    \fz \colon \bsb{ 0, \lno{ \bxs - \bp_{\bz} }_2 } 
&
    \to \bbR
\\
    \rmt
&
    \mapsto \rmf( \bxs + \rmt \bz ) \;.
\end{align*}
Being $\fa$ a proper level set, for all $\bz \in \sdm$, the function $\fz$ satisfies
% \begin{multline*}
%     \min_{\bx \in \dcube} \rmf(\bx) 
% = 
%     \fz(0) 
% < 
%     a
% \\
% \le
%     \min_{ \bx \in \partial \dcube } \rmf(\bx)
% \le
%     \fz \brb{ \lno{ \bxs - \bp_{\bz} }_2  } \;.
% \end{multline*}
\[
    \min_{\bx \in \dcube} \rmf(\bx) 
= 
    \fz(0) 
< 
    a
\le
    \min_{ \bx \in \partial \dcube } \rmf(\bx)
\le
    \fz \brb{ \lno{ \bxs - \bp_{\bz} }_2  } \;.
\]
Thus, for each $\bz \in \sdm$, by the convexity and continuity of $\fz$, there exists a unique value $\rmt_{\bz} \in \bsb{ 0 , \lno{ \bxs - \bp_{\bz} }_2 }$ such that $\fz(\rmt_{\bz}) = \rma$ (Figure~\ref{fig:nls-convex-1}), which we use to define the following function on the unit sphere
\begin{align*}
    \rms \colon \sdm & \to \bbR \\
    \bz & \mapsto \rms(\bz) := \rmt_{\bz} \;.
\end{align*}
In words, $\rmt_{\bz}$ is the distance between the minimizer $\bxs$ and the level set $\fa$ in the direction of $\bz$.
We show now that $\rms$ is \lip{} with respect to the geodesic distance $\theta$ on $\sdm$, i.e., that there exists a constant $\ell > 0$ such that, for all $\bz_1,\bz_2 \in \sdm$,
\[
    \babs{ \rms(\bz_1) - \rms(\bz_2) }
\le
    \ell \, \theta(\bz_1, \bz_2) \;,
\]
where $\theta(\bz_1,\bz_2) = \arccos{ \brb{ \lan{ \bz_1, \bz_2 } }}$ is the angle between the two unit vectors $\bz_1,\bz_2$.
Fix two arbitrary $\bz_1 , \bz_2 \in \sdm$ with geodesic distance $\theta := \theta (\bz_1,\bz_2) \in (0,\pi]$. 
If $\theta \ge \pi/6$, we have
\[
    \frac{ \babs{ \rms(\bz_1) - \rms(\bz_2) } }{\theta}
\le
    \frac{6  \sqrt{\rmd}   }{ \pi }.
\]
Assume now that $\theta < \pi/6$. 
Consider the two-dimensional plane containing the triangle with vertices $\bxs$, $\bv_1 := \bxs + \rms(\bz_1) \bz_1$ and $\bv_2 := \bxs + \rms(\bz_2) \bz_2$ (note that the three points are not aligned). 
Let $\bv$ be the orthogonal projection of $\bxs$ on the line containing $\bv_1$ and $\bv_2$. 
Assume first that $\bv$ belongs to the segment $[\bv_1,\bv_2]$ (Figure~\ref{fig:nls-convex-2}, left). 
\begin{figure}
    \centering
    \begin{tikzpicture}
    \draw[gray, dashed] (0,2) -- (0,0);
    \draw (0,0) node[below] {$\bxs$} -- (-1,2) node[above] {$\bv_1$} -- (0,2) node[above] {$\bv$} -- (3,2) node[above] {$\bv_2$} -- cycle;
    \coordinate (xs) at (0,0);
    \coordinate (vone) at (-1,2);
    \coordinate (vtwo) at (3,2);
    \pic [draw, "$\theta$", angle eccentricity=1.5] {angle = vtwo--xs--vone};
    \end{tikzpicture}
    \hspace{2cm}
    \begin{tikzpicture}
    \draw[gray, dashed] (0,2) -- (0,0);
    \draw[gray, dashed] (0,2) -- (1,2);
    \draw (0,2) node {$.$} node[above] {$\bv$};
    \coordinate (xs) at (0,0);
    \coordinate (vone) at (1,2);
    \coordinate (vtwo) at (3,2);
    \coordinate (v) at (0,2);
    \pic[draw, "$\theta$", angle eccentricity=1.5] {angle = vtwo--xs--vone};
    \pic[draw, "$\phi$", angle eccentricity=1.5] {angle = xs--vone--vtwo};
    \pic[draw, gray, "$\phi'$", angle eccentricity=1.5] {angle = v--vone--xs};
    \draw (0,0) node[below] {$\bxs$} -- (1,2) node[above] {$\bv_1$} -- (3,2) node[above] {$\bv_2$} -- cycle;
    \end{tikzpicture}
    \caption{In the left (resp., right) picture, $\bv$ belongs (resp., does not belong) to the segment $[\bv_1,\bv_2]$.}
    \label{fig:nls-convex-2}
\end{figure}
Then, the function 
\begin{align*}
\rmg_{\bv_1,\bv_2} \colon \bbR & \to [0,+\iop) \\
\rmt & \mapsto \rmg_{\bv_1,\bv_2}(\rmt) := \Bno { \bxs - \brb{ \bv_1 + \rmt(\bv_2 - \bv_1) } }_2^2
\end{align*}
has its unique minimum at some $\ts \in [0,1]$. For all $\rmt \in \bbR$, we have
% \begin{align*}
% &
%     \rmg_{\bv_1,\bv_2}(\rmt)
%  =
%     \bno{ (1-\rmt) (\bv_1 - \bxs) + \rmt (\bv_2 - \bxs)  }_2^2
% \\
% & 
%     \hspace{7.62843pt}
% = 
%     (1-\rmt)^2 \rms(\bz_1)^2 + 
%     \rmt^2 \rms(\bz_2)^2
% \\
% &
%     \qquad \hspace{7.62843pt}
%     +2 \rmt (1-\rmt) \rms(\bz_1) \rms(\bz_2) \cos(\theta)
% \\
% & 
%     \hspace{7.62843pt}
% = 
%     \rmt^2 \brb{ \rms(\bz_1)^2 + \rms(\bz_2)^2 - 2 \rms(\bz_1) \rms(\bz_2) \cos(\theta) }
% \\
% &
%     \qquad \hspace{7.62843pt}
%     + \rmt \brb{ 2 \rms(\bz_1) \rms(\bz_2) \cos(\theta) - 2 \rms(\bz_1)^2 }
%     + \rms(\bz_1)^2 \;.
% \end{align*}
\begin{align*}
    \rmg_{\bv_1,\bv_2}(\rmt)
& =
    \bno{ (1-\rmt) (\bv_1 - \bxs) + \rmt (\bv_2 - \bxs)  }_2^2
\\
& 
= 
    (1-\rmt)^2 \rms(\bz_1)^2 + 
    \rmt^2 \rms(\bz_2)^2
    +2 \rmt (1-\rmt) \rms(\bz_1) \rms(\bz_2) \cos(\theta)
\\
& 
= 
    \rmt^2 \brb{ \rms(\bz_1)^2 + \rms(\bz_2)^2 - 2 \rms(\bz_1) \rms(\bz_2) \cos(\theta) }
    + \rmt \brb{ 2 \rms(\bz_1) \rms(\bz_2) \cos(\theta) - 2 \rms(\bz_1)^2 }
    + \rms(\bz_1)^2 \;.
\end{align*}
The derivative of this function is given, for all $\rmt \in \bbR$, by
% \begin{align*}
%     \rmg_{\bv_1,\bv_2}'(\rmt)
% & =
%     2\rmt \brb{ \rms(\bz_1)^2 + \rms(\bz_2)^2 - 2 \rms(\bz_1) \rms(\bz_2) \cos(\theta) }
% \\
% &
%     \hspace{39.14294pt}
%     +
%     \brb{ 2 \rms(\bz_1) \rms(\bz_2) \cos(\theta) - 2 \rms(\bz_1)^2 } \;.
% \end{align*}
\[
    \rmg_{\bv_1,\bv_2}'(\rmt)
=
    2\rmt \brb{ \rms(\bz_1)^2 + \rms(\bz_2)^2 - 2 \rms(\bz_1) \rms(\bz_2) \cos(\theta) }
    +
    \brb{ 2 \rms(\bz_1) \rms(\bz_2) \cos(\theta) - 2 \rms(\bz_1)^2 } \;.
\]
Hence we have
\[
    0 
\le
    \ts
=
    \frac{ 2 \rms(\bz_1)^2 - 2 \rms(\bz_1) \rms(\bz_2) \cos(\theta) }
    {
    2 \brb{ \rms(\bz_1)^2 + \rms(\bz_2)^2 - 2 \rms(\bz_1) \rms(\bz_2) \cos(\theta) }
    }.
\]
Since the above denominator is strictly positive, we obtain
\[
    2 \rms(\bz_1)^2 \ge 2 \rms(\bz_1) \rms(\bz_2) \cos(\theta) \;,
\]
thus, being $\rms(\bz_1)$ and $\rms(\bz_2)$ also strictly positive,
\[
    \cos(\theta) 
\leq
    1 - \frac{\rms(\bz_2) - \rms(\bz_1)}{\rms(\bz_2)}
\]
and in turn, since $\rms(\bz) \le \sqrt{\rmd}$ for all $\bz \in \sdm$,
\[
    \rms(\bz_2) - \rms(\bz_1)
\leq 
    \sqrt{\rmd}
    \brb{ 1 - \cos(\theta) } \;.
\]
Being $\theta > 0$, we have $1 -\cos(\theta) \le \theta$ and thus
\[
    \frac{\rms(\bz_2) - \rms(\bz_1)}{\theta}
\le 
    \sqrt{\rmd} \;.
\]
Swapping the roles of $\bv_1$ and $\bv_2$ (i.e., considering the function $\rmg_{\bv_2,\bv_1}$) 
% \[
%     \rmg_{\bv_2,\bv_1}(\rmt) 
% = 
%     \Bno{ x_m - \brb{ \bv_2 + \rmt(\bv_1 - \bv_2) } }^2
% \]
we obtain similarly
\[
    \frac{\rms(\bz_1) - \rms(\bz_2)}{\theta}
\leq 
    \sqrt{\rmd} \;.
\]
Hence, when $\bv$ belongs to the segment $[\bv_1,\bv_2]$, we obtained
\[
    \frac{\babs{ \rms(\bz_1) - \rms(\bz_2) } }{\theta}
\le
    \sqrt{\rmd} \;.
\]
Consider now the last case where $\bv$ does not belong to the segment $[\bv_1,\bv_2]$ (Figure~\ref{fig:nls-convex-2}, right).
Without loss of generality, we can (and do) assume that $\rms(\bz_2) > \rms(\bz_1)$, and thus that $\bv$ is closer to $\bv_1$ than to $\bv_2$. 
By convexity of $\rmf$ on the line containing $\bv_1$ and $\bv_2$, we have $\rmf(\bv) \ge \rma$. 
% Let $\rmL>0$ be the \lip{} constant of $\rmf$ with respect to the Euclidean norm $\lno{\cdot}_2$.
Using the fact that the level set $\fa$ is $\Delta$-proper and the $(\rmc,\gamma)$-\hold{}ness of $\rmf$, we get
\[
    \Delta 
\le 
    \rma - \min_{\bx \in \dcube} \rmf(\bx) 
\le 
    \rmf(\bv) - \rmf(\bxs) 
\le
    \rmc \lno{ \bv - \bxs }_\iop^\gamma 
\le
    \rmc \lno{ \bv - \bxs }_2^\gamma \;,
\]
which in turn implies
\begin{equation} 
\label{eq:norm:v:minus:xm:smaller}
    \lno{ \bv - \bxs }_2 
\ge
    \lrb{ \frac{ \Delta }{ \rmc } }^{1/\gamma} \;.
\end{equation}
Let $\phi$ be the angle between $\bxs - \bv_1$ and $\bv_2 - \bv_1$. 
Applying the sine rule to the triangle $\bxs,\bv_1,\bv_2$, we obtain
\[
\frac{\sin(\theta)}{ \lno{ \bv_1 - \bv_2 }_2 }
=
\frac{\sin(\phi)}{ \rms(\bz_2)}
\] 
and thus
\begin{equation} 
\label{eq:sin:phi:eq}
    \sin(\phi) 
=  
    \frac{ \rms(\bz_2) \sin(\theta) }{ \lno{ \bv_1 - \bv_2 }_2 } \;.
\end{equation}
Let $\phi' = \pi - \phi$ be the angle between $\bxs- \bv_1$ and $\bv - \bv_1$. 
Note that the angle between $\bxs - \bv$ and $\bv_1 - \bv$ is $\pi/2$, being $\bv$ the orthogonal projection of $\bxs$ on the line containing $\bv_1$ and $\bv_2$. 
Hence, applying the sine rule to the triangle $\bxs , \bv_1 , \bv$ we obtain
\[
    \frac{\sin( \phi')}{ \lno{ \bv - \bxs }_2 }
= 
    \frac{\sin( \pi/2 )}{ \rms(\bz_1) }
\]
and thus
\begin{equation} 
\label{eq:norm:v:minus:xm:eq}
    \lno{ \bv - \bxs }_2
= 
    \rms(\bz_1) \sin (\phi) \;.
\end{equation}
From \eqref{eq:norm:v:minus:xm:smaller}, \eqref{eq:norm:v:minus:xm:eq}, and \eqref{eq:sin:phi:eq}, we obtain
\[
    \frac{\rms(\bz_1) \rms(\bz_2) \sin(\theta)}
    {\lno{ \bv_1 - \bv_2 }_2 } 
\geq 
    % \frac{\Delta}{ \rmL } 
    \lrb{ \frac{ \Delta }{ \rmc } }^{1/\gamma} \;.
\]
The triangle inequality yields
% \begin{align*}
% &
% \babs{ \rms(\bz_1) - \rms(\bz_2) }
% =
%     \babs{ \bno{ \bv_1 - \bxs }_2  - \bno{ \bv_2  - \bxs }_2  } 
% \\
% &
% \hspace{40.1822pt}
% \le
%     \bno{ (\bv_1 - \bxs) - (\bv_2  - \bxs) }_2 
% = 
%     \lno{ \bv_1 - \bv_2 }_2
% \end{align*}
\[
    \babs{ \rms(\bz_1) - \rms(\bz_2) }
=
    \babs{ \bno{ \bv_1 - \bxs }_2  - \bno{ \bv_2  - \bxs }_2  } 
\le
    \bno{ (\bv_1 - \bxs) - (\bv_2  - \bxs) }_2 
= 
    \lno{ \bv_1 - \bv_2 }_2
\]
and thus
\[
    \babs{ \rms(\bz_1) - \rms(\bz_2) }
\le 
    % \frac{ \rmL \rmd \sin(\theta) }{\Delta}
    \lrb{ \frac{ \rmc }{ \Delta } }^{1/\gamma}
    \rmd \sin(\theta)
\le 
    % \frac{ \rmL \rmd }{\Delta} \theta\;,
    \lrb{ \frac{ \rmc }{ \Delta } }^{1/\gamma}
    \rmd \theta \;,
\]
where we used again $\rms(\bz) \le \rmd^{1/2}$ for any $\bz \in \sdm$. 
Putting everything together, we have shown that 
\begin{equation}
    \frac{ \babs{ \rms(\bz_1) - \rms(\bz_2) }}{\theta}
\le 
%     \max
%     \lrb{
%     \frac{6  \sqrt{\rmd}   }{ \pi }
%     ,\;
%     \sqrt{d}
%     ,\;
%     \frac{ \rmL \rmd }{\Delta}
%     }
% =
    \max
    \lrb{
    \frac{ 6 }{ \pi } \sqrt{\rmd}
    , \;
    \lrb{ \frac{ \rmc }{ \Delta } }^{1/\gamma} \rmd
    }
: = 
\ell\;, 
\end{equation}
for all $\theta \in (0,\pi]$, i.e., that $\rms$ is $\ell$-\lip{} on $\sdm$ with respect to the geodesic distance.

Consider now a covering of $\sdm$ with respect to the geodesic distance, with radius $\beta \rmr$, where $\beta \in (0, 1]$ will be selected later. 
This is a set of points $\bz_1,\ldots,\bz_{\rmn} \in \sdm$ such that the union of all the balls (with respect to the geodesic distance $\theta$) with radius $\beta \rmr$ centered at these points contains the whole $\sdm$.
We show now how such a covering can be taken using order of $1/\rmr^{\rmd-1}$ points.
Fix any two distinct $\bz_1,\bz_2 \in \sdm$ with geodesic distance $\theta(\bz_1,\bz_2) \in (0, \pi/2]$ and consider the isosceles triangle $\bz_1, \bzero, \bz_2$ with angles $\angle(\bz_1\,\bzero\,\bz_2)=\theta(\bz_1,\bz_2)$ and 
$
    \angle(\bzero \, \bz_2 \, \bz_1)
=
    \angle(\bz_2 \, \bz_1 \, \bzero)
=
    \brb{ \pi - \theta(\bz_1, \bz_2) }/2
=
    \pi/2 - \theta(\bz_1, \bz_2)/2
$ (Figure~\ref{fig:nls-convex-3}).
\begin{figure}
    \centering
    \begin{tikzpicture}
    \draw (0,0) node[below] {$\bzero$} -- (-1.5,2) node[above] {$\bz_1$} -- (1.5,2) node[above] {$\bz_2$} -- cycle;
    \coordinate (bzer) at (0,0);
    \coordinate (bzo) at (-1.5,2);
    \coordinate (bzt) at (1.5,2);
    \pic[draw, "$\theta(\bz_1{,} \bz_1)$", angle eccentricity=2] {angle = bzt--bzer--bzo};
    \end{tikzpicture}
    \caption{The isosceles triangle $\bz_1, \bzero, \bz_2$.}
    \label{fig:nls-convex-3}
\end{figure}
The sine rule yields
\[
    \frac{\lno{ \bz_1 - \bz_2 }_2}{\sin \brb{ \theta(\bz_1, \bz_2) }}
=
    \frac{1}{\cos \brb{ \theta(\bz_1, \bz_2)/2 } }
\]
or, equivalently stated,
\[
    \lno{ \bz_1 - \bz_2 }_2
=
    \frac{\sin \brb{ \theta(\bz_1, \bz_2) }}{\cos \brb{ \theta(\bz_1, \bz_2)/2 } } \;.
\]
Using the fact that $\sin(\rmx) \ge (2/\pi) \rmx$, for all $\rmx \in [0,\pi/2]$, the equality above gives
\[
    \lno{ \bz_1 - \bz_2 }_2
\ge
    \sin \brb{ \theta(\bz_1, \bz_2) }
\ge
    \frac{2}{\pi} \, \theta(\bz_1, \bz_2) \;.
\]
Therefore, if $\rmx \le \pi/2$, each ball with center $\bc$ radius $(2/\pi)\rho$ with respect to the Euclidean distance is included in the corresponding ball with center $\bc$ and radius $\rho$ with respect to the geodesic distance.
Thus, being $\rmr < 1 \le \pi/2$, in order to cover $\sdm$ with balls with radius $\beta \rmr$ with respect to the geodesic distance, it is enough to cover $\sdm$ with balls with radius $(2/\pi)\beta \rmr$ with respect to the Euclidean distance.
Moreover, since for any two points $\bx,\by \in \partial[-1,1]^\rmd$ on the boundary of the \hyperc{} $[-1,1]^\rmd$, their Euclidean distance $\lno{ \bx - \by }_2$ is larger than the Euclidean distance $\bno{ \bx/\lno{\bx}_2 - \by/\lno{\by}_2 }_2$ between their projections on the unit sphere, and since any point in the unit sphere can be reached this way, in order to cover the unit sphere with balls with radius $(2/\pi)\beta \rmr$ with respect to the Euclidean distance it is sufficient to cover the boundary $\partial[-1,1]^\rmd$ of $[-1,1]^\rmd$ with balls with radius $(2/\pi)\beta \rmr$ with respect to the Euclidean distance.
This is easy to do, as each one of the $2\rmd$ faces $\bcb{ [-1,1]\times \ldots \times [-1,1] \times  \{-1,1\} \times [-1,1] \times \ldots \times [-1,1] }$ of $\partial [-1,1]^\rmd$ can be covered with the same number of balls of radius $(2/\pi)\beta \rmr$ with respect to the $(\rmd-1)$-dimensional Euclidean distance that cover the \hyperc{} $[-1,1]^{\rmd-1}$.
This can be done, e.g., by taking a uniform grid of $(2/\pi)\beta \rmr$-spaced points.
Projecting these points onto $\sdm$ gives a covering $\bz_1, \ld, \bz_n$ of $\sdm$ with respect to the geodesic distance, with radius $\beta \rmr$, and with a number of points $\rmn$ that is at most
% \begin{align}
% &
%     \nonumber
%     \rmn
% \le
%     2 \rmd \lrb{ 1 + \lce{ \frac{\pi}{ 2 \beta \rmr} } }^{\rmd-1}
% \le
%     2 \rmd \lrb{ 2 + \frac{\pi}{ 2 \beta \rmr} }^{\rmd-1}
% \\
% &
% \label{e:you-will-need-this-later}
% \le
% %     2 \rmd \lrb{ 2\frac{\pi}{ 2 \beta \rmr} + \frac{\pi}{ 2 \beta \rmr} }^{\rmd-1}
% % =
%     2 \rmd \lrb{ \frac{3}{ 2 }\pi }^{\rmd-1} \lrb{\frac{1}{\beta\rmr}}^{\rmd-1} \;.
% \end{align}
\begin{equation}
    \rmn
\le
    2 \rmd \lrb{ 1 + \lce{ \frac{\pi}{ 2 \beta \rmr} } }^{\rmd-1}
\le
    2 \rmd \lrb{ 2 + \frac{\pi}{ 2 \beta \rmr} }^{\rmd-1}
\le
    2 \rmd \lrb{ \frac{3}{ 2 }\pi }^{\rmd-1} \lrb{\frac{1}{\beta\rmr}}^{\rmd-1} \;.
\label{e:you-will-need-this-later}
\end{equation}
Fix this covering $\bz_1, \ld, \bz_\rmn$.
Fix also an arbitrary $\bx \in \bcb{  \labs{ \rmf - \rma| \le \rmr } }$.
Note that, being $\rmr \le \Delta/2$ and $\fa$ a $\Delta$-proper level set, then the minimizer $\bxs$ cannot belong to the set $\bcb{  \labs{ \rmf - \rma| \le \rmr } }$, hence $\bx \neq \bxs$.
Let $\bz =  (\bx-\bxs) / \lno{ \bx-\bxs}_2 $. 
Similarly as before, define for all $\rmt \in \bsb{ 0, \lno{ \bx - \bxs }_2 }$, the function $\rmf_{\bz}(\rmt) := \rmf(\bxs + \rmt \bz)$. 
Then $\rmf_{\bz}$ is convex, $\rmf_{\bz}(0) = \rmf(\bxs)$, $\rmf_{\bz} \brb{ \rms(\bz) } = \rma$ and $\babs{ \rmf_{\bz} \brb{ \lno{ \bx - \bxs }_2 } - a } \le \rmr$. 
If $\rmf_{\bz} \brb{ \lno{\bx - \bxs }_2 } < \rma$, by convexity, we have $\rms(\bz) > \lno{ \bx - \bxs }_2$, hence
% \begin{multline*}
%     \frac{\rma - \rmr - \rmf(\bxs)}{\lno{ \bx - \bxs }_2}
% \le
%     \frac{\rmf_{\bz}\brb{ \lno{ \bx - \bxs }_2 } - \rmf_{\bz}(0)}{\lno{ \bx - \bxs }_2 - 0}
% \\
% \le
%     \frac{ \rma - \rmf_{\bz} \brb{ \lno{\bx - \bxs }_2 } }{\rms(\bz) - \lno{ \bx - \bxs }_2 }
% \le
%     \frac{ \rmr }{\rms(\bz) - \lno{ \bx - \bxs }_2 }
% \end{multline*}
\[
    \frac{\rma - \rmr - \rmf(\bxs)}{\lno{ \bx - \bxs }_2}
\le
    \frac{\rmf_{\bz}\brb{ \lno{ \bx - \bxs }_2 } - \rmf_{\bz}(0)}{\lno{ \bx - \bxs }_2 - 0}
\le
    \frac{ \rma - \rmf_{\bz} \brb{ \lno{\bx - \bxs }_2 } }{\rms(\bz) - \lno{ \bx - \bxs }_2 }
\le
    \frac{ \rmr }{\rms(\bz) - \lno{ \bx - \bxs }_2 }
\]
and recalling that $\rmr \le \Delta/2$ so that $ \rma - \rmr  - \rmf(\bxs) \ge \Delta/2 > 0 $, we have
\[
    \rms(\bz) - \lno{ \bx - \bxs }_2
\le
    \rmr
    \frac{\sqrt{\rmd}}
    { \rma - \rmr - \rmf(\bxs) }
\le
    \lrb{
    2
    \frac{\sqrt{\rmd}}
    { \Delta  }
    } \, \rmr \;,
\]
where we used $\lno{ x - x_m }_2 \le \sqrt{\rmd}$. 
If $\rmf_{\bz} \brb{  \lno{ \bx - \bxs }_2 } \ge \rma$, proceed similarly.
By convexity of $\rmf_{\bz}$ we have $\rms(\bz) \le \lno{ \bx - \bxs }_2$.
If $\rms(\bz) = \lno{ \bx - \bxs }_2$, then trivially $\lno{ \bx - \bxs }_2 - \rms(\bz) = 0 \le \brb{ 2 \sqrt{\rmd}/\Delta } \rmr$. 
If on the other hand, $\rms(\bz) < \lno{ \bx - \bxs }_2$, using the convexity of $\rmf_{\bz}$ once again, we get
\[
    \frac{\rma - \rmf_{\bz}(0)}{\rms(\bz) - 0}
\! \le \! 
    \frac{\rmf_{\bz}\brb{ \lno{ \bx - \bxs }_2 } - \rma}{\lno{ \bx - \bxs }_2 - \rms(\bz)}
\!  \le \! 
    \frac{ \rmr }{ \lno{ \bx - \bxs }_2 - \rms(\bz) }
\]
and using $\rma - \rmf(\bxs) \ge \Delta > 0 $ and $\rms(\bz) \le \sqrt{\rmd}$, yields
\[
    \lno{ \bx - \bxs }_2 - \rms(\bz)
\le
    \lrb{
    \frac{\sqrt{\rmd}}
    { \Delta  }
    } \, \rmr
\le
    \lrb{
    2
    \frac{\sqrt{\rmd}}
    { \Delta  }
    } \, \rmr \;.
\]
Thus we proved that
\begin{equation} \label{eq:norm:rz:minus:norm}
    \babs{ \rms(\bz) - \lno{ \bx - \bxs }_2 }
\le
    \lrb{
    2
    \frac{\sqrt{\rmd}}
    { \Delta  }
    } \, \rmr \;.
\end{equation}
Furthermore, there exists $\rmi \in \{ 1 , \ldots , \rmn\}$ such the geodesic distance of $\bz_\rmi$ and $\bz$ is smaller than or equal to $\beta \rmr$. 
Therefore we have, from \eqref{eq:norm:rz:minus:norm}, and the $\ell$-\lip{}ness of the function $\rmr$ with respect to the geodesic distance,
% \begin{align*}
% &
%     \babs{ \rms(\bz_\rmi) - \lno{ \bx - \bxs }_2 }
% \le
%     \babs{ \rms(\bz_\rmi) - \rms(\bz) } 
% \\
% &
% \hspace{32.88629pt}
%     + 
%     \babs{ \rms(\bz) - \lno{ \bx - \bxs }_2 }
% \le
%     (\ell \beta) \, \rmr + \lrb{
%     2
%     \frac{\sqrt{\rmd}}
%     { \Delta  }
%     } \, \rmr \;.
% \end{align*}
\[
    \babs{ \rms(\bz_\rmi) - \lno{ \bx - \bxs }_2 }
\le
    \babs{ \rms(\bz_\rmi) - \rms(\bz) } 
    + 
    \babs{ \rms(\bz) - \lno{ \bx - \bxs }_2 }
\le
    (\ell \beta) \, \rmr + \lrb{
    2
    \frac{\sqrt{\rmd}}
    { \Delta  }
    } \, \rmr \;.
\]
Hence, with $\gamma > 0$ to be chosen later, there exists 
\[
    \bx_\rmi' 
:= 
    \bxs + \rms(\bz_\rmi) \bz_\rmi + \rmk \gamma \rmr \bz_\rmi \;,   
\]
with $\rmk \in \bbZ$ such that
\[
    \labs{ \rmk } 
\le
    \frac{\ell \beta  +  
    \frac{2 \sqrt{\rmd}}
    { \Delta  }
    }{
    \gamma
    }
\]
and with 
\begin{equation} \label{eq:x:prime:i}
    \babs{ \lno{ \bx_\rmi'  - \bxs }_2 - \lno{ \bx - \bxs }_2 }
\le
    \gamma \rmr \;.
\end{equation}
This is obtained by covering the segment $[ -\ell \beta \rmr - 2 \rmr
\rmd^{1/2}/
 \Delta , \; \ell \beta \rmr + 2 \rmr 
\rmd^{1/2}/ \Delta     ]$ with points with equidistance $\gamma \rmr$. 
Then, we obtain
\begin{align*}
&
    \lno{ \bx  - \bx_\rmi' }_\iop
\le 
    \lno{ \bx  - \bx_\rmi' }_2
\\
&
    \hspace{8.63004pt}
= 
    \Bno{ \brb{ \bxs + \lno{ \bx - \bxs }_2 \, \bz }  - \brb{ \bxs + \lno{ \bx_\rmi' - \bxs }_2 \, \bz_\rmi}  }_2
\\
& 
    \hspace{8.63004pt}
=
    \bno{ \lno{ \bx - \bxs }_2 \, \bz - \lno{ \bx_\rmi' - \bxs }_2 \, \bz_\rmi }_2
\\
&
    \hspace{8.63004pt}
=
    \Bigl\lVert \lno{ \bx - \bxs }_2 ( \bz - \bz_\rmi) 
\\[-5pt]
&
    \hspace{80.58815pt} - \brb{ \lno{ \bx_\rmi' - \bxs }_2 - \lno{ \bx - \bxs }_2 } \bz_\rmi\Bigr\rVert_2
\\
& 
    \hspace{8.63004pt}
\le 
    \babs{ \lno{ \bx - \bxs }_2 -  \lno{ \bx_\rmi' - \bxs }_2 }
    +
    \sqrt{\rmd} \lno{ \bz - \bz_\rmi }_2
\\
&
    \hspace{8.63004pt}
\le
    \brb{ \gamma + \sqrt{\rmd} \beta } \rmr \;,
\end{align*}
from \eqref{eq:x:prime:i}. 
Hence, with 
\[
    \rmn' 
\le 
    2 \rmd \lrb{ \frac{3}{ 2 }\pi }^{\rmd-1} \lrb{\frac{1}{\beta\rmr}}^{\rmd-1}
    \lrb{
    1 +
    2
    \frac{\ell \beta  +  
    \frac{2 \sqrt{\rmd}}
    { \Delta }
    }{
    \gamma
    }
    }
\]
points, we have obtained a covering of $\bcb{ \labs{ \rmf - \rma } \le \rmr }$ with radius $\brb{ \gamma + \sqrt{\rmd} \beta } \rmr$ with respect to the $\sup$-norm $\lno{\cdot}_\iop$. 
Choosing $\beta := 1/\brb{ 4 \sqrt{\rmd} }$ and $\gamma := 1/4$ so that that $\brb{ \gamma + \sqrt{\rmd} \beta } \le 1/2$, we therefore determined a covering of $\bcb{ \labs{ \rmf - \rma } \le \rmr }$ with radius $\rmr/2$ with respect to the $\sup$-norm consisting of $\rmn'$ elements. 
Thus, $\rmn'$ is greater than or equal to the smallest cardinality $\cM \brb{ \bcb{ \labs{ \rmf - \rma } \le \rmr }, \; \rmr/2 }$ of a covering of $\bcb{ \labs{ \rmf - \rma } \le \rmr }$ with radius $\rmr/2$ with respect to the $\sup$-norm.
For a known result relating pickings and coverings (we recall it in~\eqref{eq:wainwright}, Section~\ref{sec:lemmaPacking}), we have
\[
    \cM \Brb{ \bcb{ \labs{ \rmf - \rma } \le \rmr }, \; \rmr/2 }
\ge
    \cN \Brb{ \bcb{ \labs{ \rmf - \rma } \le \rmr }, \; \rmr } \;,
\]
which concludes the proof.
\end{proof}

\begin{theorem}
\label{t:worst-case-upp-bound-gradho}
Consider the \biga{} algorithm (Algorithm~\ref{alg:bi-g}) run with input $\rma,\rmc_1,\gamma_1$. 
Let $\rmf \colon \dcube \to \bbR$ be an arbitrary convex $(\rmc_1,\gamma_1)$-\gradho{} function with proper level set $\fa$.
Fix any accuracy $\e > 0$.
Then, for all 
\[
    \rmn 
>
    \kappa \, \frac{1}{\e^{(\rmd-1)/(1+\gamma_1)}}
\]
the output $\rmS_\rmn$ returned after the $\rmn$-th query is an $\e$-approximation of $\fa$, where $\kappa>0$ is a constant independent of $\e$ that depends exponentially on the dimension $\rmd$.
\end{theorem}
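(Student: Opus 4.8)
The plan is to combine two results already established in this excerpt: Corollary~\ref{c:for-biga-main} (equivalently, Corollary~\ref{c:upp-bound-nls-dimension} specialized to \biga{} via Lemma~\ref{l:interp-biga}), which gives a sample-complexity upper bound of order $1/\e^{\ds/(1+\gamma_1)}$ in terms of any \nls{} dimension $\ds$ of $\fa$, and Proposition~\ref{prop:covering:level:set:convex}, which shows that a convex $(\rmc,\gamma)$-\hold{} function with proper level set admits the \nls{} dimension $\ds = \rmd-1$. First I would observe that a $(\rmc_1,\gamma_1)$-\gradho{} function $\rmf$ on $\dcube$ is in particular $(\rmc,\gamma)$-\hold{} for suitable constants: its gradient is bounded on the compact cube $\dcube$ (being continuous), say $\lno{\nabla\rmf}_\iop \le L$, so $\rmf$ is $L$-\lip{}, hence $(L,1)$-\hold{}. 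Thus Proposition~\ref{prop:covering:level:set:convex} applies to $\rmf$ (with $\rmc = L$, $\gamma = 1$), yielding a constant $\Cs>0$ with $\cN(\{|\rmf-\rma|\le\rmr\},\rmr)\le \Cs(1/\rmr)^{\rmd-1}$ for all $\rmr\in(0,1)$; that is, $\ds = \rmd-1$ is a \nls{} dimension of the proper level set $\fa$.

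Next I would feed this into Corollary~\ref{c:for-biga-main}. Since $\rmd\ge 2$ forces $\ds = \rmd-1 \ge 1 > 0$ (and for $\rmd=1$ one can note $\ds$ can be taken to be $1 = \rmd$ anyway, so that $\rmd-1=0$ only in the degenerate case and the $\log$ branch is absorbed by the stated bound; more cleanly, one simply invokes the general formula of Corollary~\ref{c:for-biga-main} with $\ds = \rmd-1$), the corollary gives that for all $\rmn > \kappa(\ds)\,1/\e^{\ds/(1+\gamma_1)} = \kappa(\rmd-1)\,1/\e^{(\rmd-1)/(1+\gamma_1)}$, the output $\rmS_\rmn$ of \biga{} is an $\e$-approximation of $\fa$. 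Setting $\kappa := \kappa(\rmd-1)$ — a quantity that is independent of $\e$ and, tracing through the bounds in Corollary~\ref{c:upp-bound-nls-dimension}, depends exponentially on $\rmd$ (through factors like $(4 + 32\rmb\,\bbI_{2\rmb\ge 1})^\rmd$, $\rmk = 2^\rmd$, $(2\rmb)^{\rmd/\beta}$, etc.) — yields exactly the claimed statement.

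The one point requiring a little care is the $\rmd = 1$ edge case, where the corollary's dichotomy puts us in the $\ds = 0$ branch with a $\log_2(1/\e^{1/(1+\gamma_1)})$ term rather than a power of $1/\e$; since $\log_2(1/\e^{1/(1+\gamma_1)})$ is eventually dominated by $1/\e^{0/(1+\gamma_1)} = 1$ only up to an additive constant, one should either restrict attention to $\rmd\ge 2$ (the interesting regime, and the only one where level sets are genuinely $(\rmd-1)$-dimensional by Theorem~\ref{thm:inherenthardness}) or note that for $\rmd=1$ a crude bound of order $1/\e^{0} \le \kappa$ for $\e$ bounded away from zero, combined with the finite-$\e$ behaviour, still fits the stated form after enlarging $\kappa$. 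In the write-up I would simply apply Corollary~\ref{c:for-biga-main} with $\ds = \rmd-1$ and remark that this is admissible by Proposition~\ref{prop:covering:level:set:convex}. The main (and essentially only) substantive obstacle has already been dispatched elsewhere — it is Proposition~\ref{prop:covering:level:set:convex}, whose proof requires the geometric Lipschitz analysis of the radial function $\rms$ on the sphere; here the proof is just an assembly, so the chief thing to get right is the bookkeeping of constants and the clean reduction of $(\rmc_1,\gamma_1)$-\gradho{}ness to ordinary \hold{}ness so that the convex \nls{}-dimension result becomes applicable.
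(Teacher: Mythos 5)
Your proposal matches the paper's own proof exactly: the paper likewise notes that a $(\rmc_1,\gamma_1)$-\gradho{} function (being a restriction of a $\cC^1$ function) is Lipschitz on the compact cube, hence \hold{}, so Proposition~\ref{prop:covering:level:set:convex} applies and yields the \nls{} dimension $\ds=\rmd-1$, after which the conclusion follows directly from Corollary~\ref{c:for-biga-main}. Your extra care about the $\rmd=1$ edge case (where $\ds=0$ puts Corollary~\ref{c:for-biga-main} in the $\log$ branch, which is not literally of the form $\kappa/\e^{0}$) is a real subtlety that the paper's one-line proof passes over silently; flagging it is reasonable, though the paper evidently treats $\rmd\ge 2$ as the intended regime.
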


\begin{proof}
Being $\rmf$ the restriction of a differentiable function defined on on an open set containing $\dcube$, it is \lip{} on the compact $\dcube$.
Thus we can apply Proposition~\ref{prop:covering:level:set:convex} to get a \nls{} dimension $\ds=\rmd-1$ for $\rmf$. 
The result then follows directly from Corollary~\ref{c:for-biga-main}. 
\end{proof}

% \subsection{\Nearo{} sample complexity for convex gradient-\lip{} functions}
\subsection{\Nearo{} Sample Complexity for Convex Gradient-\lip{} Functions}
\label{s:convex-lower-bound}

Theorem~\ref{t:worst-case-upp-bound-gradho} applied to the special case of \gradlip{} functions, states that the \biga{} algorithm (Algorithm~\ref{alg:bi-g}) needs order of $1/\e^{(\rmd-1)/2}$ queries to reliably output an $\e$-approximation of a \gradlip{} function.
The following theorem shows that this rate cannot be improved, i.e., that \biga{} is \nearo{} (Definition~\ref{d:near-opt}) for determining proper level sets of \gradlip{} functions.

\begin{theorem}
\label{t:lower-bound-convex}
Fix any level $\rma \in \bbR$ and an arbitrary accuracy $\e > 0$.
No deterministic algorithm $\rmA$ can guarantee to output an $\e$-approximation of any $\Delta$-proper level set $\fa$ of an arbitrary convex $\rmc_1$-\gradlip{} functions $\rmf$ with $\rmc_1\ge 3$ and $\Delta \in (0, \nicefrac{1}{4}]$, querying less than $\kappa / \e^{(\rmd-1)/2}$ of their values, where $\kappa > 0$ is a constant independent of $\e$.
This implies in particular that (recall Definition~\ref{n:smallest-n-queries}),
\[
    \inf_\rmA \sup_\rmf \fn (\rmf,\rmA,\e,\rma)
\ge
    \kappa \, \frac{1}{\e ^{(\rmd-1)/2}}\;,
\]
where the $\inf$ is over all deterministic algorithms $\rmA$ and the $\sup$ is over all $\rmc_1$-\gradlip{} functions $\rmf$ with $\Delta$-proper level set $\fa$, with $\rmc_1 \ge 3$ and $\Delta \in (0,\nicefrac{1}{4}]$.
\end{theorem}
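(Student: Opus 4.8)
The plan is to build, for any fixed number of queries $\rmn < \kappa/\e^{(\rmd-1)/2}$, a family of convex $\rmc_1$-\gradlip{} functions that are all indistinguishable to a deterministic algorithm after $\rmn$ queries (because they all agree on a large set), yet whose proper level sets $\fa$ are pairwise ``far apart'' in the sense that no single output set $\rmS_\rmn$ can be an $\e$-approximation of more than one of them. The natural base function is a rotationally symmetric convex paraboloid-type function, say $\rmf_0(\bx) = \Delta + \tfrac{1}{2}\bno{\bx - \bc}_2^2$ or a suitably scaled variant, centered near the middle $\bc$ of $\dcube$, chosen so that the level set $\{\rmf_0 = \rma\}$ is (a portion of) a Euclidean sphere of radius $\rho$ with $\rho \asymp \sqrt{\rma - \Delta}$, and so that $\rmf_0$ is genuinely convex, $\rmc_1$-\gradlip{} with $\rmc_1 \ge 3$, and has a $\Delta$-proper level set with $\Delta \in (0,\nicefrac14]$ (this forces $\rma$ and the geometry: $\rma$ must lie strictly above the minimum $\Delta$ of $\rmf_0$ by the margin and below $\min_{\partial\dcube}\rmf_0$). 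Crucially, the level set is a $(\rmd-1)$-dimensional sphere, so it carries roughly $(1/\rmr)^{\rmd-1}$ separated points at scale $\rmr$ — this is exactly where the exponent $(\rmd-1)$ enters, mirroring the packing count in Theorem~\ref{t:level-sets-are-big}.

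Next I would perturb $\rmf_0$ locally near the level-set sphere. Take a maximal set of $M \asymp (1/\e^{1/2})^{\rmd-1} \asymp 1/\e^{(\rmd-1)/2}$ points $\bp_1,\dots,\bp_M$ on the sphere $\{\rmf_0 = \rma\}$ that are pairwise $\Omega(\sqrt\e)$-separated, and around each $\bp_m$ carve out a small ``bump'' region (a ball of radius $\asymp \sqrt\e$) inside which we may, or may not, add a $C^2$ convex perturbation that pushes the local level surface outward (or inward) by an amount $\asymp \e$ while keeping the function convex and $\rmc_1$-\gradlip{}. Convexity of the perturbed function is the delicate bookkeeping point: one works with the one-dimensional convex functions $\fz(\rmt) = \rmf(\bxs + \rmt\bz)$ along rays from the minimizer, as in the proof of Proposition~\ref{prop:covering:level:set:convex}, and checks that a rank-one or radial quadratic correction of magnitude $\e$ localized in a $\sqrt\e$-ball changes second derivatives by $O(1)$, hence stays within the $\rmc_1$-\gradlip{} budget (this is why we need $\rmc_1 \ge 3$: a constant of slack to absorb the perturbation on top of the base curvature). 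The $2^M$ functions so obtained all coincide outside $\bigcup_m B(\bp_m, c\sqrt\e)$, and these balls are disjoint by the separation of the $\bp_m$.

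Now the standard adversary argument: run a deterministic algorithm $\rmA$ that makes $\rmn < \kappa/\e^{(\rmd-1)/2}$ queries on the base function $\rmf_0$; since $\rmn < M$ (for a suitable $\kappa$), at least one bump ball $B(\bp_{m_0}, c\sqrt\e)$ receives no query. By determinism, $\rmA$ behaves identically on $\rmf_0$ and on the function $\rmf_0'$ that equals $\rmf_0$ everywhere except inside $B(\bp_{m_0}, c\sqrt\e)$, where it is perturbed so that the level surface moves out by $2\e$ near $\bp_{m_0}$; hence $\rmA$ outputs the same $\rmS_\rmn$ for both. But the two level sets $\{\rmf_0 = \rma\}$ and $\{\rmf_0' = \rma\}$ differ near $\bp_{m_0}$: either $\bp_{m_0} \in \{\rmf_0 = \rma\} \subseteq \rmS_\rmn$ must hold and yet $\rmf_0'(\bp_{m_0})$ is either $> \rma + \e$ (violating $\rmS_\rmn \subseteq \{|\rmf_0' - \rma| \le \e\}$), or some point of $\{\rmf_0' = \rma\}$ lies outside $\rmS_\rmn$ (violating $\{\rmf_0' = \rma\} \subseteq \rmS_\rmn$) — one shows that for an appropriate sign and magnitude of the perturbation, whichever way $\rmS_\rmn$ is, it fails $\e$-approximation for at least one of $\rmf_0, \rmf_0'$. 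Chaining this with Definition~\ref{n:smallest-n-queries} gives $\inf_\rmA \sup_\rmf \fn(\rmf,\rmA,\e,\rma) \ge \kappa/\e^{(\rmd-1)/2}$, as claimed.

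The main obstacle, as flagged above, is making the perturbation simultaneously (i) large enough ($\asymp \e$ displacement of the level surface) to break $\e$-approximation, (ii) localized in a $\sqrt\e$-ball so the $2^M$ bumps don't interfere and the query-counting works, and (iii) compatible with \emph{global} convexity and the $\rmc_1$-\gradlip{} constraint — the tension is that a convex function cannot have its level surface bent outward arbitrarily without increasing curvature somewhere, so one must verify the Hessian bound carefully, probably by an explicit radial construction and the ray-wise convexity argument already used in Proposition~\ref{prop:covering:level:set:convex}. A secondary technical point is checking that all the constraints ($\rmc_1 \ge 3$, $\Delta \le \nicefrac14$, level set inside $(0,1)^\rmd$ and bounded away from $\partial\dcube$) can be met simultaneously by the base function for the prescribed range of parameters; this is a matter of fixing the center, the scale $\rho$, and the admissible levels $\rma$, and should be routine once the dimensions of the ``collar'' around the sphere are chosen.
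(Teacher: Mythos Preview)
Your proposal is correct and follows essentially the same approach as the paper: a quadratic base function $\rmf_0$ centered at $\bhalf$ whose level set is a sphere, a packing of $\Theta(1/\e^{(\rmd-1)/2})$ points on that sphere at scale $\sqrt{\e}$, and a localized radial bump of amplitude $\Theta(\e)$ in a $\sqrt{\e}$-ball around an unqueried point, followed by the standard two-function adversary argument. The one place where the paper is more concrete than your sketch is the convexity check: rather than a ray-wise argument, it exploits that $\rmf_0$ is $2$-strongly convex and scales the bump so that $\nabla\rmf_1$ is $(1/\sqrt{\rmd})$-Lipschitz in Euclidean norm, whence $\rmf_0+\rmf_1$ remains $1$-strongly convex and $3$-\gradlip{} (this is precisely why $\rmc_1\ge 3$ is needed).
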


\begin{proof}
We will prove the equivalent statement that no algorithm can output a $(\kappa'\e)$-approximation of any $\Delta$-proper level set $\fa$ of an arbitrary $\rmc_1$-\gradlip{} functions $\rmf$ with $\rmc_1\ge 3$ and $\Delta \in (0, \nicefrac{1}{4}]$, querying less than $1 / \e^{(\rmd-1)/2}$ of their values, where $\kappa' > 0$ is a constant independent of $\e$.

Let $\bo := ( \nicefrac{1}{2},\ldots,\nicefrac{1}{2} ) \in \dcube$,
$
    \oo
:= 
    ( 
    \nicefrac{1}{2} + \nicefrac{1}{(4\rmd)^{1/2}}, \ldots,
    \nicefrac{1}{2} + \nicefrac{1}{(4\rmd)^{1/2}}
    ) \in (0,1)^\rmd
$,
and
\begin{align*}
    \rmf_0 \colon \dcube & \to \bbR \\
    \bx & \mapsto \rmf_0(\bx) := \rma - \frac{1}{4} + \lno{\bx - \bo}_2^2\;.
\end{align*}
Then $\rmf_0$ is the restriction to $\dcube$ of the differentiable function $\bx\mapsto \rma -\frac{1}{4} + \lno{\bx - \bo}_2^2$ defined on $\bbR^\rmd$, and it satisfies, for all $\bx,\by \in \dcube$
\begin{align}
    \nonumber
    \bno{ \nabla\rmf_0(\bx) - \nabla\rmf_0(\by) }_\iop
&
=  
    \bno{ 2 (\bx - \bo) - 2 (\by - \bo) }_\iop
\\
&
=
    2 \, \lno{\bx-\by}_\iop
    \label{e:two-lip}
\;,
\end{align}
i.e., it is $2$-\gradlip{}.
Moreover, $\rmf_0$ has minimum equal to $\rma -  \nicefrac{1}{4}$ at $\bo$ and satisfies $\rmf_0(\oo) = \rma$.
Also, the minimum of $\rmf_0$ over $\partial\dcube$ is equal to $\rma + (1/2)^2 - 1/4 = \rma$. 
Hence $\{ \rmf_0 = \rma \}$ is a $\Delta$-proper level set, with $\Delta = \nicefrac{1}{4}$.

Consider an arbitrary deterministic algorithm $\rmA$ applied to the level set $\{ \rmf_0 = \rma \}$ of $\rmf_0$ and assume that only $\rmn < 1 /\e^{(\rmd-1)/2}$ values are queried before outputting a set $\rmS_\rmn$.

Let $\cS$ be the Euclidean sphere with center $\bo$ and radius $\lno{\bo-\oo}_2 =  \nicefrac{1}{2}$ (Figure~\ref{fig:geodesic-balls}).
\begin{figure}
    \centering
    \begin{tikzpicture}
    \draw[->] (-0.5,0) -- (4.5,0);
    \draw[->] (0,-0.5) -- (0,4.5);
    \draw (0,0) rectangle (4,4);
    \draw[blue] (2,2) circle (2);
    \draw [magenta, thick, domain = {2/sqrt(5)}:{4/sqrt(5)}] plot ({ 2+\x }, { 2+ sqrt(4 - \x*\x) });
    \fill ({sqrt(2) + 2}, {sqrt(2) + 2}) circle (0.5pt);
    \draw[red] (3.25, 4.5) -- (2,2) -- (4.5, 3.25);
    \draw ({sqrt(2) + 2}, {sqrt(2) + 2}) node[below left] {$\bx_0$};
    \draw (2,2) node[below] {$\bo$};
    \draw (0,0) node[below left] {$0$};
    \draw (4,0) node[below] {$1$};
    \draw (0,4) node[left] {$1$};
    \end{tikzpicture}
    \caption{The blue circle is the level set $\cS$; the geodesic ball on $\cS$ with center $\bx_0$ and radius $\kappa_1 \e^{1/2}$ is the arc in magenta and the corresponding cone is in red.}
    \label{fig:geodesic-balls}
\end{figure}
Note that $\{\rmf_0 = \rma\} = \cS$.
For any constant $\kappa_1 > 0$ and each point $\bx_0$ in $\cS$, consider the convex cone having origin $\bo$, and with intersection with $\cS$ equal to the geodesic ball on $\cS$ with center $\bx_0$ and radius $\kappa_1 \e^{1/2}$.
Then we can choose $\kappa_1$ (small enough) and $\bx_0$ such that this cone does not contain any points of $\rmf_0$ queried by the algorithm. 
Fix such a $\kappa_1$.
If $\rmS_\rmn$ does not contain $\bx_0$ then, since $\rmf_0(\bx_0) = \rma$, we have shown that $\{\rmf_0 = \rma\} \not \s \rmS_\rmn$, and the result follows.

Assume now that  $\bx_0 \in \rmS_\rmn$.
We will define a function $\rmf_1 \colon \dcube  \to \bbR$ such that the sum $\rmf_0 + \rmf_1$ is convex and $3$-\gradlip{}, the level set $\{\rmf_0 + \rmf_1 = \rma \}$ is $\Delta$-proper, and the algorithm applied to the level set $\{\rmf_0 + \rmf_1 = \rma \}$ of $\rmf_0 + \rmf_1$ does not return a $(\kappa'\e)$-approximation of $\{\rmf_0 + \rmf_1 = \rma \}$.
The idea is to carefully design a function $\rmf_1$ that is non-zero only on the cone that has not been explored by the algorithm.
This way, we can make $\rmf_0 + \rmf_1$ a perturbation of $\rmf_0$ that is not far enough from $\rmf_0$ so that the algorithm can distinguish the two, but it is different enough so that no $(\kappa'\e)$-approximation of $\{\rmf_0 = \rma\}$ can be a $(\kappa'\e)$-approximation of $\{\rmf_0 + \rmf_1 = \rma\}$.
The subtle part is that by construction, such an $\rmf_1$ is not convex, but the sum $\rmf_0 + \rmf_1$ has to retain the convexity of $\rmf_0$.

We begin by defining three non-negative auxiliary functions $\phi_1,\phi_2,\phi_3 \colon [0,1] \to \bbR$, for all $\rmt \in \dcube$, by
\[
    \phi_1(t)
:=
    % \begin{cases}
    %     \rmt & \text{if } \rmt \in [0,\nicefrac{1}{4}] \\
    %     \nicefrac{1}{4} - (\rmt - \nicefrac{1}{4})  & \text{if } \rmt \in [\nicefrac{1}{4}, \nicefrac{1}{2}] \\
    %     -(\rmt - \nicefrac{1}{2}) & \text{if } \rmt \in [\nicefrac{1}{2}, \nicefrac{3}{4}] \\
    %     -\nicefrac{1}{4} + (\rmt - \nicefrac{3}{4}) & \text{if } \rmt \in [\nicefrac{3}{4},1]
    % \end{cases}\;,
    \begin{cases}
        \rmt & \text{if } \rmt \in [0,\nicefrac{1}{4}] \\
        \nicefrac{1}{4} - (\rmt - \nicefrac{1}{4})  & \text{if } \rmt \in [\nicefrac{1}{4}, \nicefrac{3}{4}] \\
        -\nicefrac{1}{4} + (\rmt - \nicefrac{3}{4}) & \text{if } \rmt \in [\nicefrac{3}{4},1]
    \end{cases}\;,
% \qquad
%     \phi_2(\rmt) 
% := 
%     \int_0^\rmt \mathrm{d}\rmx \int_{0}^\rmx \phi_1(\rmu) \dif \rmu \;,
% \qquad
%     \phi_3(\rmt)
% :=
%     \phi_2(1-\rmt)\;.
\]
$
    \phi_2(\rmt) 
:= 
    \int_0^\rmt \mathrm{d}\rmx \int_{0}^\rmx \phi_1(\rmu) \dif \rmu
$,
and 
$
    \phi_3(\rmt)
:=
    \phi_2(1-\rmt)
$.
We remark that $\phi_2$ is twice differentiable with second derivative $\phi_1$. 
We see that $\phi_2(0) = 0$, $\phi_2'(0)=0$ and $\phi_2''(0) = 0$. 
We see that $\phi_2'$ is strictly positive on $[0,1]$. 
Hence, $\kappa_2 := \phi_2(1)>0$. 
We also see that $\phi_2'(1) = 0$ and $\phi_2''(1) = 0$.
Then $\phi_3$ is twice differentiable and non-negative on $[0,1]$ and satisfies $\phi_3''(0) = 0$, $\phi_3''(1) = 0 $, $\phi_3'(0) = 0$, $\phi_3'(1) = 0$, $\phi_3(0) =  \kappa_2 >0$ and $\phi_3(1) = 0$. 

We write $\rmB(\bx,\rmr)$ for the closed Euclidean ball with center $\bx$ and radius $\rmr$ intersected with $\dcube$.
We define the function
$\rmf_1 \colon \dcube \to \bbR$, for all $\bx \in \dcube$, by
\[
    f_1(\bx) := 
        \begin{cases}
            \displaystyle{
            \beta \e \, \phi_3 \lrb{
            \frac{ \lno{ \bx - \bx_0 }_2 }{ \kappa_3 \e^{1/2} }
            }
            }
            & \hspace{-5.40663pt} \text{if }\bx \in \rmB( \bx_0 , \kappa_3 \e^{1/2} )
            \\
            0 & \hspace{-5.40663pt} \text{otherwise} \;,
        \end{cases}
\]
with $\kappa_3, \beta > 0$ to be selected later. 
We can find $\kappa_3 > 0$ small enough such that $\rmB( \bx_0 , \kappa_3 \e^{1/2} )$ is included in the cone discussed above (recall Figure~\ref{fig:geodesic-balls}). 
Fix such a $\kappa_3$. 
Then, $\rmf_0$ and $\rmf_0 + \rmf_1$ differ only on this cone which is not explored by the algorithm. As a consequence, the algorithm applied to $\rmf_0 + \rmf_1$ returns the same set $\rmS_\rmn$, which contains $\bx_0$.  
Since $\rmf_0(\bx_0) + \rmf_1(\bx_0) = \rma + \beta \e \kappa_2$, the proof will be completed (letting $\kappa' := \beta \kappa_2 /2$) once we show that we can select $\beta > 0$, independently of $\e$, such that $\rmf_0 + \rmf_1$ is a convex $3$-\gradlip{} function with $\Delta$-proper level set $\{ \rmf_0 + \rmf_1 = \rma \}$, where $\Delta = \nicefrac{1}{4}$.

Because of the above discussed inclusion of the ball in the cone, we have $\rmf_1(\bo)  = 0$. 
Hence
% \begin{align*}
% &
%     \min_{\bx \in \dcube}
%     \brb{ \rmf_0(\bx) + \rmf_1(\bx) }
% \le
%     \rmf_0(\bo) + \rmf_1(\bo)
% = 
%     \rma - \frac{1}{4}
% \\
% &
% \hspace{12.04639pt}
% \le
%     \rma
% =
%     \min_{\bx \in \partial \dcube}
%      \rmf_0(\bx) 
% \le
%     \min_{\bx \in \partial \dcube}
%     \brb{ \rmf_0(\bx) + \rmf_1(\bx) }\;,
% \end{align*}
\[
    \min_{\bx \in \dcube}
    \brb{ \rmf_0(\bx) + \rmf_1(\bx) }
\le
    \rmf_0(\bo) + \rmf_1(\bo)
= 
    \rma - \frac{1}{4}
\le
    \rma
=
    \min_{\bx \in \partial \dcube}
     \rmf_0(\bx) 
\le
    \min_{\bx \in \partial \dcube}
    \brb{ \rmf_0(\bx) + \rmf_1(\bx) }\;,
\]
which proves that the level set $\{ \rmf_0 + \rmf_1 = \rma \}$ is $\Delta$-proper, with $\Delta = \nicefrac{1}{4}$.

By definition of $\rmf_1$, its gradient is, for $\bx \in \dcube$,
\[
    \nabla f_1 (\bx) 
= 
    \beta \e \phi_3' \lrb{ \frac{ \lno{ \bx - \bx_0 }_2 }{ \kappa_3 \e^{1/2} } }
    \frac{1}{\kappa_3 \e^{1/2}} \frac{\bx - \bx_0}{ \lno{ \bx - \bx_0 }_2 }
\] 
if $\bx \in B( \bx_0 , \kappa_3 \e^{1/2} )$, $0$ otherwise.
We remark that in the above formula, by convention, $\nabla \rmf_1 (\bx_0) = 0$, which follows from the properties of $\phi_3$. 
% Thus 
% \[
%     \sup_{\bx \in \dcube } \bno{ \nabla \rmf_1  (\bx) }_2
% \le 
%     \sup_{\rmt \in [0,1]} \babs{ \phi_3'(\rmt) } \beta \e^{1/2} \frac{1}{\kappa_3} \;.
%  \]
%  Hence we can choose $\beta$ small enough such that 
% $
% \sup_{x \in [0,1]^d }
% || \nabla (f_1)  (x) || \leq A_1 / 2
% $ and thus 
% $
% \sup_{x \in [0,1]^d }
% || \nabla (f_0 + f_1)  (x) || \leq A_1/2 + 2 d^{1/2} \leq A_1.
% $
% 
Next, we observe that $\nabla \rmf_1$ satisfies
% \begin{multline*}
%     \sup_{\substack{\bu,\bv \in \dcube\\\bu \neq \bv }}
%     \frac{ \bno{ \nabla \rmf_1 (\bu) - \nabla \rmf_1 (\bv) }_2 }
%     { \lno{ \bu - \bv }_2 }
% \\
% \le 
%     \sup_{\substack{\bu,\bv \in \rmB( \bx_0 , \kappa_3 \e^{1/2} )\\\bu\neq\bv}}
%     \frac{ \bno{ \nabla \rmf_1 (\bu) - \nabla f_1 (\bv) }_2 }
%     { \lno{ \bu - \bv }_2 } \;,
% \end{multline*}
\[
    \sup_{\substack{\bu,\bv \in \dcube\\\bu \neq \bv }}
    \frac{ \bno{ \nabla \rmf_1 (\bu) - \nabla \rmf_1 (\bv) }_2 }
    { \lno{ \bu - \bv }_2 }
\le 
    \sup_{\substack{\bu,\bv \in \rmB( \bx_0 , \kappa_3 \e^{1/2} )\\\bu\neq\bv}}
    \frac{ \bno{ \nabla \rmf_1 (\bu) - \nabla f_1 (\bv) }_2 }
    { \lno{ \bu - \bv }_2 } \;,
\]
Indeed, for $\bu,\bv \notin \rmB( \bx_0 , \kappa_3 \e^{1/2} )$ the gradient difference is zero while for $\bu \in \rmB( \bx_0 , \kappa_3 \e^{1/2} )$ and $\bv \notin \rmB( \bx_0 , \kappa_3 \e^{1/2} )$ the gradient difference is equal to the difference between the gradient at $\bu$ and the gradient at the intersection of the segment $[u,v]$ and the boundary $\partial \rmB( \bx_0 , \kappa_3 \e^{1/2} )$. 
Hence,
% \begin{align*}
% &
%     \sup_{\substack{\bu,\bv \in \dcube\\\bu \neq \bv }}
%     \frac{ \bno{ \nabla \rmf_1 (\bu) - \nabla \rmf_1 (\bv) }_2 }
%     { \lno{ \bu - \bv }_2 }
% \\
% & 
% \hspace{2.10382pt}
% \le
%     \sup_{\substack{\bu,\bv \in \rmB( \bx_0 , \kappa_3 \e^{1/2} )\\\bu\neq\bv}}
%     \frac{ 
%     \beta \e
%     }{
%         \lno{ \bu - \bv }_2
%     }
%     \times
% \\
% &
% \hspace{28.0447pt}
%     \biggl\lVert
%         \phi_3' \lrb{  \frac{ \lno{ \bu - \bx_0 }_2 }{ \kappa_2 \e^{1/2} } }
%         \frac{1}{\kappa_3 \e^{1/2}} \frac{\bu - \bx_0}{ \lno{ \bu - \bx_0 }_2 }
% \\
% &
% \hspace{56.0894pt}
%     -
%         \phi_3' \lrb{  \frac{ \lno{ \bv - \bx_0 }_2 }{ \kappa_2 \e^{1/2} } }
%         \frac{1}{\kappa_3 \e^{1/2}} \frac{\bv - \bx_0}{ \lno{ \bv - \bx_0 }_2 }
%         \biggr\rVert_2
% \\
% & 
% \hspace{2.10382pt}
% =
%     \sup_{\substack{\bu,\bv \in \rmB( \bzero , 1 )\\\bu \neq \bv}}
%     \frac{ 
%     \frac{\beta}{\kappa_3^2}
%     \lno{
%     \phi_3' \brb{ \lno{\bu}_2 }
%     \frac{ \bu }{ \lno{\bu}_2 }
%     -
%     \phi_3' \brb{ \lno{\bv}_2 }
%     \frac{ \bv }{ \lno{\bv}_2 }
%     }_2
%     }{
%     \lno{\bu - \bv}_2
% }\;.
% \end{align*}
\begin{align*}
&
    \sup_{\substack{\bu,\bv \in \dcube\\\bu \neq \bv }}
    \frac{ \bno{ \nabla \rmf_1 (\bu) - \nabla \rmf_1 (\bv) }_2 }
    { \lno{ \bu - \bv }_2 }
\\
& 
\hspace{22.93959pt}
\le
    \sup_{\substack{\bu,\bv \in \rmB( \bx_0 , \kappa_3 \e^{1/2} )\\\bu\neq\bv}}
    \frac{ 
    \beta \e
    }{
        \lno{ \bu - \bv }_2
    }
    \biggl\lVert
        \phi_3' \lrb{  \frac{ \lno{ \bu - \bx_0 }_2 }{ \kappa_2 \e^{1/2} } }
        \frac{1}{\kappa_3 \e^{1/2}} \frac{\bu - \bx_0}{ \lno{ \bu - \bx_0 }_2 }
    -
        \phi_3' \lrb{  \frac{ \lno{ \bv - \bx_0 }_2 }{ \kappa_2 \e^{1/2} } }
        \frac{1}{\kappa_3 \e^{1/2}} \frac{\bv - \bx_0}{ \lno{ \bv - \bx_0 }_2 }
        \biggr\rVert_2
\\
& 
\hspace{22.93959pt}
=
    \sup_{\substack{\bu,\bv \in \rmB( \bzero , 1 )\\\bu \neq \bv}}
    \frac{ 
    \frac{\beta}{\kappa_3^2}
    \lno{
    \phi_3' \brb{ \lno{\bu}_2 }
    \frac{ \bu }{ \lno{\bu}_2 }
    -
    \phi_3' \brb{ \lno{\bv}_2 }
    \frac{ \bv }{ \lno{\bv}_2 }
    }_2
    }{
    \lno{\bu - \bv}_2
}\;.
\end{align*}

Letting $\tilde{\rmf}_1 \colon \rmB(\bzero,1) \to \bbR$ be defined for all $\rmt \in \rmB(\bzero,1)$, by $\tilde{f}_1(\bx) = \phi_3 \brb{ \lno{\bx}_2 }$, we obtain
% \begin{multline*}
%     \sup_{\substack{\bu,\bv \in \dcube\\\bu \neq \bv }}
%     \frac{ \bno{ \nabla \rmf_1 (\bu) - \nabla \rmf_1 (\bv) }_2 }
%     { \lno{ \bu - \bv }_2 }
% \\
% \le 
%     \sup_{\substack{\bu,\bv \in \rmB(\bzero,1)\\\bu \neq \bv }}
%     \frac{ 
%     \frac{\beta}{\kappa_3^2}
%     \bno{
%     \nabla \tilde{\rmf}_1 (\bu) 
%     -
%     \nabla \tilde{\rmf}_1 (\bv) 
%     }_2
%     }{
%     \lno{\bu - \bv}_2
%     } \;.
% \end{multline*}
\[
    \sup_{\substack{\bu,\bv \in \dcube\\\bu \neq \bv }}
    \frac{ \bno{ \nabla \rmf_1 (\bu) - \nabla \rmf_1 (\bv) }_2 }
    { \lno{ \bu - \bv }_2 }
\le 
    \sup_{\substack{\bu,\bv \in \rmB(\bzero,1)\\\bu \neq \bv }}
    \frac{ 
    \frac{\beta}{\kappa_3^2}
    \bno{
    \nabla \tilde{\rmf}_1 (\bu) 
    -
    \nabla \tilde{\rmf}_1 (\bv) 
    }_2
    }{
    \lno{\bu - \bv}_2
    } \;.
\]
Since $\tilde{\rmf}_1$ is a fixed twice differentiable function which does not depend on $\e$, we can choose $\beta >0$ small enough, independently of $\e$, such that 
\[
    \sup_{\substack{\bu,\bv \in \dcube\\\bu \neq \bv }}
    \frac{ \bno{ \nabla \rmf_1 (\bu) - \nabla \rmf_1 (\bv) }_2 }
    { \lno{ \bu - \bv }_2 }
\le
    \frac{1}{\sqrt{\rmd}}\;.
\]
This implies that, for all $\bu, \bv \in \dcube$,
% \begin{multline}
%     \bno{ \nabla \rmf_1 (\bu) - \nabla \rmf_1 (\bv) }_\iop
% \le
%     \bno{ \nabla \rmf_1 (\bu) - \nabla \rmf_1 (\bv) }_2
% \\
% \le
%     \frac{1}{\sqrt{\rmd}} \lno{ \bu - \bv }_2
% \le
%     \lno{ \bu - \bv }_\iop \;.
%     \label{e:one-lip}
% \end{multline}
\begin{equation}
    \bno{ \nabla \rmf_1 (\bu) - \nabla \rmf_1 (\bv) }_\iop
\le
    \bno{ \nabla \rmf_1 (\bu) - \nabla \rmf_1 (\bv) }_2
\le
    \frac{1}{\sqrt{\rmd}} \lno{ \bu - \bv }_2
\le
    \lno{ \bu - \bv }_\iop \;.
    \label{e:one-lip}
\end{equation}
Thus, the two bounds~\eqref{e:two-lip}~and~\eqref{e:one-lip} yield
\[
    \sup_{\substack{\bu,\bv \in \dcube\\\bu \neq \bv }}
    \frac{  \bno{ \nabla(\rmf_0 + \rmf_1) (\bu) - \nabla(\rmf_0 + \rmf_1)(\bv) }_\iop }
    { \lno{ \bu - \bv }_\iop }
\le
%     2 + 1
% =
    3 \;.
\]
Therefore, $\rmf_0 + \rmf_1$ is $3$-\gradlip{}. 
Finally, we have
% \begin{align*}
% &
% \inf_{\substack{\bu,\bv \in \dcube\\\bu \neq \bv }}
%     \frac{ \ban{ \nabla(\rmf_0 + \rmf_1) (\bu) - \nabla(\rmf_0 + \rmf_1)(\bv) , \frac{\bu - \bv}{ \lno{ \bu - \bv }_2 } } }
%     { \lno{ \bu - \bv }_2 }
% \\
% & 
% \qquad
% \ge
%     \inf_{\substack{\bu,\bv \in \dcube\\\bu \neq \bv }}
%     \frac{ \ban{ \nabla\rmf_0 (\bu) - \nabla \rmf_0 (\bv) , \frac{\bu - \bv}{ \lno{ \bu - \bv }_2 } } }
%     { \lno{ \bu - \bv }_2 }
% \\
% &
% \hspace{51.37018pt}
%     -
%     \sup_{\substack{\bu,\bv \in \dcube\\\bu \neq \bv }}
%     \frac{ \ban{ \nabla\rmf_1 (\bu) - \nabla \rmf_1 (\bv) , \frac{\bu - \bv}{ \lno{ \bu - \bv }_2 } } }
%     { \lno{ \bu - \bv }_2 }
% \\
% &
% \qquad
% \ge 
%     2 - \frac{1}{\sqrt{\rmd}}
% \ge
%     1\;.
% \end{align*}
\begin{align*}
&
\inf_{\substack{\bu,\bv \in \dcube\\\bu \neq \bv }}
    \frac{ \ban{ \nabla(\rmf_0 + \rmf_1) (\bu) - \nabla(\rmf_0 + \rmf_1)(\bv) , \frac{\bu - \bv}{ \lno{ \bu - \bv }_2 } } }
    { \lno{ \bu - \bv }_2 }
\\
& 
\hspace{49.31444pt}
\ge
    \inf_{\substack{\bu,\bv \in \dcube\\\bu \neq \bv }}
    \frac{ \ban{ \nabla\rmf_0 (\bu) - \nabla \rmf_0 (\bv) , \frac{\bu - \bv}{ \lno{ \bu - \bv }_2 } } }
    { \lno{ \bu - \bv }_2 }
    -
    \sup_{\substack{\bu,\bv \in \dcube\\\bu \neq \bv }}
    \frac{ \ban{ \nabla\rmf_1 (\bu) - \nabla \rmf_1 (\bv) , \frac{\bu - \bv}{ \lno{ \bu - \bv }_2 } } }
    { \lno{ \bu - \bv }_2 }
\ge 
    2 - \frac{1}{\sqrt{\rmd}}
\ge
    1\;.
\end{align*}
Hence $\rmf_0 + \rmf_1$ is $1$-strongly convex and thus it is convex. 
In conclusion, we have eventually selected a constant $\beta > 0$, independent of $\e$, such that $\rmf_0 + \rmf_1$ is a convex $3$-\gradlip{} function with $\Delta$-proper level set $\{ \rmf_0 + \rmf_1 = \rma \}$, but $\rmS_\rmn$ is not a $(\kappa'\e)$-approximation of $\{ \rmf_0 + \rmf_1 = \rma \}$.
This concludes the proof.
\end{proof}

We conclude this section by remarking the analogy between the problem of approximating the level set of a convex function and that of determining an approximation of a convex body in Hausdorff distance. 
The latter problem has been studied extensively in convex geometry.
Notably, while the scope of the and the techniques used in this field differ from ours, the sample complexity results for the two problems are similar.
For an overview of these results, we refer the reader to the two surveys \citep{kamenev2019optimal,gruber1993aspects}.

\end{document}